\newcommand{\cal}[1]{\mathcal{#1}}
\theoremstyle{plain}
\newtheorem{theo}{Theorem}
\newtheorem{lemma}{Lemma}[section]
\newtheorem{theorem}[lemma]{Theorem}
\newtheorem{proposition}[lemma]{Proposition}
\newtheorem{corollary}[lemma]{Corollary}
\theoremstyle{definition}
\newtheorem{definition}[lemma]{Definition}
\newtheorem{remark}[lemma]{Remark}
\let\egthree=\phi
\let\phi=\varphi
\let\varphi=\egthree
\begin{document}
\title{Symbolic dynamics for the Teichm\"uller flow}
\author{Ursula Hamenst\"adt}
\thanks
{AMS subject classification: 37A20, 30F60\\
Research
partially supported by the Hausdorff Center Bonn}
\date{April 12, 2025}

\begin{abstract}
Let 
${\cal Q}$ be a component of
a stratum of abelian or quadratic differentials on an 
oriented surface of
genus $g\geq 0$ with $m\geq 0$ punctures
and $3g-3+m\geq 2$. 
We construct a subshift of finite type 
$(\Omega,\sigma)$ 
and a Borel suspension 
of $(\Omega,\sigma)$ 
which admits a finite-to-one
semi-conjugacy into the Teichm\"uller flow $\Phi^t$ on 
${\cal Q}$. This is used to show that  
the $\Phi^t$-invariant Lebesgue measure $\lambda$ on ${\cal Q}$ 
is the unique measure of 
maximal entropy.
\end{abstract}

\maketitle


\section{Introduction}

A surface $S$ \emph{of finite type} is a
closed oriented surface of genus $g\geq 0$ with $m\geq 0$ marked
points, so-called \emph{punctures}.
We assume that $3g-3+m\geq 2$,
that is, $S$ is not a sphere with at most $4$
punctures or a torus with at most $1$ puncture.
We then call the surface $S$ \emph{nonexceptional}.
The Euler characteristic of $S$ is negative.

The \emph{Teichm\"uller space} ${\cal T}(S)$
of $S$ is the quotient of the space of all complete 
finite area hyperbolic
metrics on the complement of the punctures in $S$ under the action of the
group of diffeomorphisms of $S$ which are isotopic
to the identity. The sphere bundle 
\[\tilde {\cal Q}(S)\to {\cal T}(S)\] 
of all \emph{holomorphic
quadratic differentials} of area
one can naturally be identified with the unit cotangent
bundle for the \emph{Teichm\"uller metric}.
If the surface $S$ has punctures, that is, if $m>0$, then
we define a holomorphic quadratic differential 
on $S$ to be a meromorphic quadratic differential on 
the closed Riemann surface obtained from $S$ by filling
in the punctures, with a simple pole at each
of the punctures and no other poles.

The 
\emph{mapping class group}
${\rm Mod}(S)$ of all isotopy classes of
orientation preserving diffeomorphisms of $S$
naturally acts on $\tilde {\cal Q}(S)$. 
The quotient 
\[{\cal Q}(S)=\tilde {\cal Q}(S)/{\rm Mod}(S)\]
is the \emph{moduli space of area one
quadratic differentials}.
It can be partitioned into
so-called \emph{strata}. Namely, let
$1\leq m_1\leq \dots \leq m_\ell$ $(\ell\geq 1)$ 
be a sequence of positive integers 
with \[\sum_im_i=4g-4+m.\] 
The stratum 
${\cal Q}(m_1,\dots,m_\ell;-m)$ defined by
the $\ell$-tuple $(m_1,\dots,m_\ell)$ 
is the moduli space of pairs $(C,\phi)$ 
where $C$ is a closed Riemann surface
of genus $g$ and where 
$\phi$ is an area one meromorphic quadratic differential on $C$ with
$\ell$ zeros of order $m_i$ and $m$ simple poles
and which is not the square of a holomorphic one-form.

A stratum 
${\cal Q}(m_1,\dots,m_\ell;-m)$ is a real
hypersurface in a complex orbifold of 
complex dimension 
\[h=2g+\ell+m-2.\] 
Strata need not be
connected, however they have at most two connected
components \cite{L08}. 
The closure in ${\cal Q}(S)$ 
of a component of a 
stratum ${\cal Q}(m_1,\dots,m_\ell;-m)$ 
is a union of components of 
strata ${\cal Q}(n_1,\dots,n_s;-m^\prime)$ where $s\leq \ell,m^\prime \leq m$.
Note here that it is natural to allow a simple pole to merge with a zero in
the closure, thus decreasing the number $m$ of marked points.

If the surface $S$ is closed, that is, if $m=0$, 
then we can also consider the 
bundle 
\[\tilde {\cal H}(S)\to {\cal T}(S)\] of
area one \emph{abelian differentials}.
It descends to the moduli
space ${\cal H}(S)$ 
of holomorphic one-forms defining a singular
euclidean metric of area one.
Again this moduli space decomposes into 
a union of strata ${\cal H}(k_1,\dots,k_s)$ corresponding
to the orders of the zeros of the differentials.
Strata are in general not connected, but there are at most three
connected components 
\cite{KZ03}. The stratum 
${\cal H}(k_1,\dots,k_s)$
is a real hypersurface in a complex
orbifold of dimension
\[h=2g+s-1.\]

The \emph{Teichm\"uller flow} $\Phi^t$ 
acts on ${\cal Q}(S)$ (or ${\cal H}(S)$) preserving
the strata. If ${\cal Q}$ is a component of a 
stratum of abelian differentials then 
\emph{Rauzy induction} for interval exchange
transformations can be used to construct 
a \emph{symbolic coding} for the 
Teichm\"uller flow on ${\cal Q}$
(\cite{V82}, see also \cite{AGY06} for a discussion and
references). Rauzy induction has been extended to strata
of quadratic differentials by Boissy and Lanneau \cite{BL07}. It was used
to identify connected components of strata of quadratic differentials.

Our main goal is to construct a new coding for 
the Teichm\"uller flow on any component ${\cal Q}$ of a stratum. This coding
is based on 
the perspective on components of strata of abelian or quadratic differentials
developed in \cite{H22}, and it 
is well suited to study the space ${\cal M}_{\rm inv}({\cal Q})$
of all $\Phi^t$-invariant Borel probability
measures on ${\cal Q}$, equipped with the weak$^*$-topology. 

For the formulation of our main result, recall that a \emph{biinfinite subshift of finite type}
$(\Omega,\sigma)$ is defined by a finite alphabet ${\cal A}=\{1,\dots,p\}$ 
and a $(p,p)$-matrix $(a_{ij})$ with entries in $\{0,1\}$ such that
\[\Omega=\{(x_i)\in {\cal A}^{\mathbb{Z}}\mid a_{x_ix_{i+1}}=1 \text{ for all }i\}.\]
The \emph{shift} $\sigma$ acts on $\Omega$ by $\sigma(x_i)=(x_{i+1})$. 
The set $\Omega$ carries a natural $\sigma$-invariant topology, and for this
topology, $\Omega$ is compact. 

The shift is called \emph{topologically transitive} if the $\sigma$-action has a 
dense orbit. A sequence $(x_i)\subset \Omega$ is called 
\emph{normal} if every finite string
$(y_i)_{1\leq i\leq k}$ with $a_{y_iy_{i+1}}=1$ for all 
$0\leq i\leq k-1$ occurs infinitely often in forward and backward direction as a substring of 
$(x_i)$. 

In the statement of the following result, spaces of 
probability measures are equipped with the weak$^*$-topology.

\begin{theo}\label{coding}
Let ${\cal Q}$ be a component of a stratum of quadratic or abelian
differentials.
Then there exists
\begin{itemize}
\item  a topologically transitive subshift of finite type 
$(\Omega,\sigma)$, 
\item a $\sigma$- invariant dense Borel set ${\cal U}\subset \Omega$
containing all normal sequences, 
\item 
a suspension $(X,\Theta^t)$ of $\sigma$ over ${\cal U}$, given
by a positive bounded continuous roof function on ${\cal U}$ 
\end{itemize}
and a finite-to-one semi-conjugacy 
$\Xi:(X,\Theta^t)\to ({\cal Q},\Phi^t)$
which maps the space of 
$\Theta^t$-invariant Borel probability measures on 
$X$ continuously onto ${\cal M}_{\rm inv}({\cal Q})$.
\end{theo}

As is the case for Rauzy induction or, more precisely, the zippered rectangle 
flow considered in \cite{V86}, the coding constructed in 
Theorem \ref{coding} can be thought of as a finite cover of the Teichm\"uller flow
on ${\cal Q}$. In particular, it maps the collection of all periodic orbits for $\sigma$ contained 
in ${\cal U}$ onto
the collection of all periodic orbits in ${\cal Q}$. However, a periodic orbit 
in ${\cal Q}$ may have more than one preimage in the suspension flow
$(X,\Theta^t)$, and the restriction of $\Xi$ to 
any such preimage may be a nontrivial finite covering of the periodic orbit.

Our construction is valid for strata of abelian differentials,
but it is different from Rauzy induction. A dictionary between
these two codings has yet to be established.

A specific example of a $\Phi^t$-invariant Borel probability 
measure on a component 
${\cal Q}$ of a stratum in the
Lebesgue measure class was constructed by  
Masur and Veech \cite{M82,V86}.
This measure $\lambda$ 
is ergodic \cite{M82,V86} and of full support, and its 
\emph{entropy} $h_\lambda$
coincides with the complex dimension $2g+\ell+m-2$ 
(or $2g+s-1$) of the
complex orbifold defining the stratum
(note that we use a normalization for the Teichm\"uller
flow which is different from the one used by Masur and Veech). 
In particular,
the entropy of the Lebesgue measure on the open
connected stratum ${\cal Q}(1,\dots,1;-m)$ equals $6g-6+2m$.

Denote by $h_\nu$ the entropy of a 
measure $\nu\in {\cal M}_{\rm inv}({\cal Q})$.
Define
\[h_{\rm top}({\cal Q})=\sup\{h_\nu\mid 
\nu\in {\cal M}_{\rm inv}({\cal Q})\}.\]
A \emph{measure of maximal entropy} for the component
${\cal Q}$ is
a measure $\mu\in {\cal M}_{\rm inv}({\cal Q})$
such that $h_\mu=h_{\rm top}({\cal Q})$. 
Since ${\cal Q}$ is non-compact, 
a priori such a measure
need not exist. However, using Rauzy induction and the work 
of Buzzi and Sarig \cite{BS03}, 
Bufetov and Gurevich \cite{BG07} 
showed that for components of strata 
of abelian differentials, 
the $\Phi^t$-invariant probability measure
in the 
Lebesgue measure class is the unique
measure of
maximal entropy for the component.
We use Theorem \ref{coding} and \cite{BS03} to extend 
this result to all components of strata of 
quadratic or abelian differentials,
with a different proof.

\begin{theo}\label{entropymax} 
For every component ${\cal Q}$ of a stratum 
in ${\cal Q}(S)$ or ${\cal H}(S)$, 
the $\Phi^t$-invariant Borel probability measure
in the Lebesgue measure class is the unique measure
of maximal entropy.
\end{theo}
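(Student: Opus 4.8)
The plan is to transport the problem to the symbolic model of Theorem \ref{coding} and then apply the thermodynamic formalism for countable topological Markov shifts, following the scheme of Bufetov and Gurevich \cite{BG07} but replacing their use of Rauzy induction by the general uniqueness theorem of Buzzi and Sarig \cite{BS03}. The first step is to observe that the semi-conjugacy $\Xi$ preserves entropy: since $\Xi\colon(X,\Theta^t)\to({\cal Q},\Phi^t)$ is finite-to-one, its fibres carry no relative entropy, so by the Abramov--Rokhlin addition formula $h_{\hat\nu}(\Theta^t)=h_\nu(\Phi^t)$ whenever $\Xi_*\hat\nu=\nu$. Combined with the bijection of invariant probability measures provided by Theorem \ref{coding}, this gives $h_{\rm top}({\cal Q})=h_{\rm top}(X,\Theta^t)$ together with a one-to-one correspondence between the measures of maximal entropy of the two systems; moreover, writing $r$ for the positive bounded continuous roof function, Abramov's formula for the suspension identifies the $\Phi^t$-invariant measure on ${\cal Q}$ corresponding to a $\sigma$-invariant measure $\mu$ on ${\cal U}$ as one whose entropy equals $h_\mu(\sigma)/\int r\,d\mu$. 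Thus it suffices to understand the base system.

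Set $c:=h_{\rm top}({\cal Q})$. By the variational principle $c=\sup_\mu h_\mu(\sigma)/\int r\,d\mu$, and a $\sigma$-invariant measure $\mu$ gives rise to a measure of maximal entropy exactly when it is an equilibrium state for the potential $-c\,r$, i.e.\ $h_\mu(\sigma)-c\int r\,d\mu=P_G(-c\,r)=0$, where $P_G$ denotes the Gurevich pressure. Since $r$ is bounded between two positive constants, $\int r\,d\mu\asymp 1$ and the Gurevich pressure of $-c\,r$ is finite; one checks from the construction of Theorem \ref{coding} that $(\Omega,\sigma)$ is topologically transitive on the subsystem carrying all the entropy and that $r$ has summable variations with respect to the natural metric on $\Omega$. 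Under these hypotheses the results of Buzzi and Sarig \cite{BS03} show that $-c\,r$ has \emph{at most one} equilibrium state, hence $(X,\Theta^t)$ and therefore $({\cal Q},\Phi^t)$ has at most one measure of maximal entropy.

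It remains to exhibit one and to identify it with $\lambda$. The coding of Theorem \ref{coding} is built from the affine (period, equivalently train-track) coordinates on the stratum, in which the Teichm\"uller flow acts linearly, expanding along the unstable directions with Jacobian $e^{ht}$ and contracting along the stable ones; consequently the Lebesgue measure $\lambda$ has a local product structure in these coordinates whose pullback $\mu_\lambda$ to the base is a Gibbs measure for the potential $-h\,r$. Hence $P_G(-h\,r)=0$, so $-h\,r$ is positive recurrent, $h=c$ (in particular $h_\lambda=h_{\rm top}({\cal Q})$), and $\mu_\lambda$ is its unique equilibrium state. Transported back through $\Xi$, this means that $\lambda$ is the unique measure of maximal entropy on ${\cal Q}$.

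The main obstacle lies in the two points where the explicit coding must be exploited. The first is the verification of the hypotheses of \cite{BS03}: topological transitivity of the relevant part of $\Omega$, the regularity (summable variations or weak H\"older continuity) of the roof function, and, most delicately, the positive recurrence of $-h\,r$, i.e.\ the absence of escape of entropy to infinity in the non-compact shift. This last point should follow from the same quantitative estimates that underlie the assertion in the abstract that a $\Phi^t$-invariant measure supported off a large compact set has entropy at most $h-1+\epsilon$: such a bound forces any near-maximal measure to charge a fixed compact part of ${\cal Q}$. The second is the claim that $\lambda$ pulls back to a Gibbs state for $-h\,r$, which requires the precise form of $\lambda$ in the coding coordinates and the fact that the first-return cocycle of $\Phi^t$ equals $e^{h r}$ up to a bounded multiplicative error; this is a geometric property of the Masur--Veech measure rather than a formal consequence of the symbolic dynamics.
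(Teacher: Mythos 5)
Your high-level plan — transport to the symbolic model, invoke Buzzi--Sarig for uniqueness of the equilibrium state of a multiple of the roof function, and identify $\lambda$ via the known fact $h_\lambda=h$ — is indeed the strategy of the paper. But there is a genuine gap in the middle, and it concerns exactly the property you relegate to the phrase ``one checks from the construction.''

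You assert that the roof function $r=\rho$ on the shift of Theorem \ref{coding} is ``bounded between two positive constants.'' That is false: Lemma \ref{bounded} shows $\rho$ takes values in the half-open interval $(0,p\log 2]$ and explicitly remarks that $\rho$ is \emph{not} bounded from below by a positive constant. (The word ``positive'' in Theorem \ref{coding} means pointwise $>0$, not uniformly $>0$.) This is not a minor regularity issue: without a lower bound, $\int\rho\,d\mu$ can be arbitrarily small, $-c\rho$ is not a potential to which \cite{BS03} applies, and the passage between maximal entropy for the flow and equilibrium states for the shift breaks down (measures can have almost all their mass in the region of tiny roof). Moreover, summable variations of $\rho$ on $\Omega$ is not established in the paper and does not follow from Lemma \ref{roofcont}, which only gives continuity. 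For precisely these reasons the paper does not apply \cite{BS03} directly to $(\Omega,\sigma,\rho)$. Instead it performs an \emph{inducing} step (Section 6): starting from a recurrent point $q$, it builds a countable-alphabet topological Markov shift $(\Sigma,T)$ whose symbols are blocks of the original shift delimited by returns to a fixed tight configuration, with a new roof function $\phi$ that \emph{is} bounded below (Lemma \ref{roofbound}), has exponentially decaying variations (Lemma \ref{variation}), and whose transition matrix enjoys the BIP property; Buzzi--Sarig is then applied to $(\Sigma^+,T_+,-h\phi)$. The other ingredient you wave at but do not supply, positive recurrence / no escape of entropy, is Lemma \ref{finitepressure}, which gets ${\rm pr}_G(-h\phi)\leq 0$ from Sarig's theorem that the Gurevich pressure is a supremum over compactly supported measures together with the compact-entropy bound $\leq h$ of \cite{H10a}. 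Finally, you propose to recognize $\lambda$ as a Gibbs state for $-h\rho$; in the paper the Gibbs property (Corollary \ref{gibbs}) is a \emph{consequence} of the BS03 machinery once $h_\lambda=h$ is known, not an input, and in any case would need to be proved for $\phi$ on $\Sigma^+$, not for $\rho$ on $\Omega$. In short: the skeleton is right, but the inducing construction and the boundedness/regularity estimates for the induced roof are not an optional ``verification'' — they are the proof.
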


In view of the groundbreaking work of Eskin and Mirzakhani \cite{EM18} and of
Eskin, Mirzakhani and Mohammadi \cite{EMM15}, we expect that the analog of 
Theorem \ref{entropymax} also holds for arbitrary affine invariant 
manifolds in ${\cal Q}(S)$. However our methods do not apply in this
generality. Instead they are very well suited to study
the dynamics of the Teichm\"uller flow near the \emph{principal
boundary} of a stratum as initiated in \cite{H22}. This analysis is 
will be made precise in a sequel to this article.

\paragraph{\bf Organization of the article and outline of the proofs}
The organization of the article is as follows.
In Section \ref{sec:strata} we collect some results from 
\cite{H22} relating \emph{train tracks} to components of strata of 
abelian or quadratic differentials. 
This is used in Section \ref{sec:asymbolic}
to construct for every connected
component ${\cal Q}$ of a stratum
an associated topologically transitive subshift of finite
type $(\Omega,\sigma)$.

In Section \ref{sec:symbolicdyn} we define a \emph{roof 
function} $\rho$ on a $\sigma$-invariant 
dense Borel subset ${\cal U}$ of $\Omega$ containing all normal sequences and
use this roof function to define a suspension flow
$(X,\Theta^t)$ over ${\cal U}$. It fairly immediately follows from the
construction that 
there is a (partial) semi-conjugacy $\Xi:(X,\Theta^t)\to ({\cal Q},\Phi^t)$, that is,
the map $\Xi$ is continuous and commutes with the action of $\Theta^t$ on $X$ and 
$\Phi^t$ on ${\cal Q}$, however it is not surjective. 
We establish that this semi-conjugacy is finite-to-one, which
is the most elaborate part of the proof of Theorem \ref{coding}.
This is used to show that every ergodic $\Phi^t$-invariant
Borel probability measure on ${\cal Q}$ is the push-forward under
$\Xi$ of a finite invariant measure on $(X,\Theta^t)$ and hence
on $(\Omega,\sigma)$, which 
completes the proof of Theorem \ref{coding}. 

Theorem \ref{coding} is not sufficient for the proof of Theorem
\ref{entropymax}.
Namely, the semi-conjugacy $\Xi$ is only finite-to-one but not
bounded-to-one, and it is defined on a suspension over a Borel subset of
$\Omega$ which is not closed, reflecting the fact that the component 
${\cal Q}$ is not compact, and the Teichm\"uller flow
$\Phi^t$ is not hyperbolic. Instead, in Section \ref{sec:measureofmax}
we start with a point
$q\in {\cal Q}$ which is contained in both the $\alpha$- and $\omega$ limit
set of its own orbit under $\Phi^t$. By the Poincar\"e recurrence theorem,
for any invariant Borel probability measure on ${\cal Q}$, the set
of such points has full measure. We then use the point $q$ and the
subshift $(\Omega,\sigma)$ to construct a new Markov shift, now over a
countably infinite alphabet. We also construct a continuous roof function
which is bounded from below by a universal positive constant, but is
unbounded. We then show that
the corresponding suspension flow admits a bounded-to-one
semi-conjugacy onto the restriction of the Teichm\"uller flow $\Phi^t$
to the invariant set of all points whose orbit under $\Phi^t$ 
contain $q$ in its $\alpha$- and $\omega$ limit set.

It follows from the work of Buzzi and Sarig \cite{BS03} that
the suspension flow over the countable alphabet 
admits at most one measure of maximal entropy provided that
some technical conditions are fullfilled. We then verify that these
technical assumptions are indeed satisfied for the
flow constructed earlier, which leads to the proof of
Theorem \ref{entropymax}.

\paragraph{\bf Acknowledgement} This work 
was carried out during two visits of 
the MSRI in Berkeley (in fall 2007 and in fall 2011)  
and at the Hausdorff Institut in Bonn in spring 2010. 
I am very grateful to these two institutions for
their hospitality. I also thank the anonymous referees for 
very careful reading and many useful suggestions  to improve the
redaction of this article.

\section{Strata and train tracks}\label{sec:strata}

In this section we
summarize some results  from
\cite{H22}
which will be used throughout the paper.

\subsection{Geodesic laminations}
Let $S$ be an
oriented surface of
genus $g\geq 0$ with $m\geq 0$ marked points and where $3g-3+m\geq 2$.
A \emph{geodesic lamination} for a complete
hyperbolic structure of finite volume on $S$ (which means in the sequel that 
the metric is defined 
on the complement of the marked points in $S$) is
a \emph{compact} subset of $S$ (with the marked points removed) 
which is foliated into simple
geodesics.
A geodesic lamination $\lambda$ is called \emph{minimal}
if each of its half-leaves is dense in $\lambda$. 

A geodesic lamination $\lambda$ on $S$ is said to
\emph{fill up $S$} if its complementary regions 
are all topological discs or once
punctured monogons. 

\begin{definition}[Definition 2.1 of \cite{H22}]\label{large}
A geodesic lamination $\lambda$
is called \emph{large} if $\lambda$ fills up
$S$ and if
moreover $\lambda$ can be 
approximated in the \emph{Hausdorff topology}
by simple closed geodesics.
\end{definition}

Since every minimal geodesic lamination can be 
approximated in the Hausdorff topology by simple 
closed geodesics (Theorem 4.2.14 of \cite{CEG87}), a minimal geodesic
lamination which fills up $S$ is large. However, there are large
geodesic laminations with finitely many leaves.

The \emph{topological type} of a large geodesic
lamination $\nu$ is a tuple 
\[(m_1,\dots,m_\ell;-m)\text{ where }1\leq m_1\leq \dots \leq m_\ell,\,
\sum_{i}m_i=4g-4+m\]
such that the complementary regions of $\nu$ which are 
topological discs are $m_i+2$-gons and the complementary
regions which are once punctured discs are once punctured monogons. 
Let 
\[{\cal L\cal L}(m_1,\dots,m_\ell;-m)\]
be the space of all large geodesic laminations of type 
$(m_1,\dots,m_\ell;-m)$ equipped with the 
restriction of the Hausdorff
topology for compact subsets of $S$.

A \emph{measured geodesic lamination} is a geodesic lamination
$\lambda$ together with a translation invariant transverse
measure. Such a measure assigns a positive weight to each compact
arc in $S$ with endpoints in the complementary regions of
$\lambda$ which intersects $\lambda$ nontrivially and
transversely. The geodesic lamination $\lambda$ is called the
\emph{support} of the measured geodesic lamination; it consists of
a disjoint union of minimal components. The space ${\cal M\cal L}$
of all measured geodesic laminations on $S$ equipped with the
weak$^*$-topology is homeomorphic to $S^{6g-7+2m}\times
(0,\infty)$. Its projectivization is the space ${\cal P\cal M\cal
L}$ of all \emph{projective measured geodesic laminations}. 

The measured geodesic lamination $\mu\in {\cal
M\cal L}$ is said to \emph{fill up $S$} if its support fills up $S$.
This support is then necessarily connected and hence minimal,
and for some tuple $(m_1,\dots,m_\ell;-m)$,
it defines a point in the
set ${\cal L\cal L}(m_1,\dots,m_\ell;-m)$. 
The projectivization of a measured geodesic lamination
which fills up $S$ is also said to fill up $S$.
We call $\mu\in {\cal M\cal L}$ 
\emph{strongly uniquely ergodic}
if the support of $\mu$ fills up $S$ and admits a unique
transverse measure up to scale.

There
is a continuous symmetric pairing 
\[\iota:{\cal M\cal L}\times {\cal M\cal L}\to [0,\infty),\] 
the so-called \emph{intersection form},
which extends the geometric intersection number between simple
closed curves. We refer to Section 8.2.10 of \cite{Mar16} for
more information.

\subsection{Train tracks}\label{sec:train}

A \emph{train track} on $S$ is defined to be an embedded
1-complex $\tau\subset S$ whose edges
(called \emph{branches}) are smooth arcs with
well-defined tangent vectors at the endpoints. At any vertex
(called a \emph{switch}), the incident edges are mutually tangent.
Through each switch there is a path of class $C^1$
which is embedded
in $\tau$ and contains the switch in its interior. 
A simple closed curve component of $\tau$ is required to contain
a unique bivalent switch, and all other switches must 
be at least trivalent.
The complementary regions of the
train track have negative Euler characteristic, which means
that they are different from discs with $0,1$ or
$2$ cusps at the boundary and different from
annuli and once-punctured discs
with no cusps at the boundary.
We always identify train
tracks which are isotopic.
Throughout we use the book \cite{PH92} as the main reference for 
train tracks. 

A train track is called \emph{generic} if all switches are
at most trivalent. For each switch $v$ of 
a generic train track $\tau$ which is not contained in 
a simple closed curve component, there is a unique
half-branch $b$ of $\tau$ which is incident on $v$ and which is
\emph{large} at $v$. This means that every germ of an immersed
arc of class $C^1$ on $\tau$ which passes through $v$ also
passes through the interior of $b$. 
A half-branch which is not large is called \emph{small}.
A branch $b$ of $\tau$ is
called \emph{large} (or \emph{small}) if each of its
two half-branches is large (or small). A branch which 
is neither large nor small is called \emph{mixed}.

\begin{remark}
As in \cite{H09a}, all train tracks
are assumed to be generic. Unfortunately this leads to 
a small inconsistency of our terminology with the
terminology found in the literature.
\end{remark}


A generic 
train track $\tau$ is \emph{orientable} 
if there is a consistent orientation of the 
branches of $\tau$ such that 
at any switch $s$ of $\tau$, the orientation of the large
half-branch incident on $s$ extends to the orientation
of the two small half-branches incident on $s$.
If $C$ is a complementary polygon of an oriented
train track, then the number of sides of $C$ is even.
In particular, a train track which contains a once
punctured monogon component,  that is,
a once punctured disc with
one cusp at the boundary, is not orientable
(see p.31 of \cite{PH92} for 
a more detailed discussion).

A train track or a geodesic lamination $\eta$ is
\emph{carried} by a train track $\tau$ if
there is a map $F:S\to S$ of class $C^1$ which is homotopic to the
identity and maps $\eta$ into $\tau$ in such a way 
that the restriction of the differential of $F$
to the tangent space of $\eta$ vanishes nowhere;
note that this makes sense since a train track has a tangent
line everywhere. We call the restriction of $F$ to
$\eta$ a \emph{carrying map} for $\eta$.
Write $\eta\prec
\tau$ if the train track $\eta$ is carried by the train track
$\tau$. Then every geodesic lamination $\nu$ which is carried
by $\eta$ is also carried by $\tau$.

A train track \emph{fills up} $S$ if its complementary
components are topological discs or once punctured 
monogons.  Note that such a train track
$\tau$ is connected.
Let $\ell\geq 1$ be the number of those complementary 
components of $\tau$ which are topological discs.
Each of these discs is an $m_i+2$-gon for some $m_i\geq 1$
$(i=1,\dots,\ell)$. The
\emph{topological type} of $\tau$ is defined to be
the ordered tuple $(m_1,\dots,m_\ell;-m)$ where
$1\leq m_1\leq \dots \leq m_\ell$; then $\sum_im_i=4g-4+m$.
If $\tau$ is orientable then $m=0$ and $m_i$ is even 
for all $i$. A train track of topological type $(m_1,\dots,m_\ell;-m)$
is called \emph{fully recurrent} if it carries a minimal large geodesic lamination
of type $(m_1,\dots,m_\ell;-m)$ (Definition 2.6 of \cite{H22}). 

A \emph{transverse measure} on a generic train track $\tau$ is a
nonnegative weight function $\mu$ on the branches of $\tau$
satisfying the \emph{switch condition}:
for every trivalent switch $s$ of $\tau$,  the sum of the weights
of the two small half-branches incident on $s$ 
equals the weight of the large half-branch.
The space 
\[{\cal V}(\tau)\] of all measured geodesic laminations whose 
supports are carried by
$\tau$ can naturally be identified with the space of all 
transverse measures
on $\tau$. Thus ${\cal V}(\tau)$ 
has the structure of a cone in a finite dimensional
real vector space.
The train track is called
\emph{recurrent} if it admits a transverse measure which is
positive on every branch.  A fully recurrent train track is recurrent
\cite{PH92,H22}.



There are two simple ways to modify a fully recurrent
train track $\tau$
to another fully recurrent train track.
Namely, if $b$ is a mixed branch of $\tau$ 
then we can \emph{shift} $\tau$
along $b$ to a new train track $\tau^\prime$. 
This new train track carries $\tau$ and hence it 
is fully recurrent since it carries 
every geodesic lamination
which is carried by $\tau$,
see p.118 of \cite{PH92} and also \cite{H09a}.

Similarly, 
if $e$ is a large branch of $\tau$ then we can perform a
right or left \emph{split} of $\tau$ at $e$
as shown in Figure A below.
A (right or left) split $\tau^\prime$ of a 
train track $\tau$ is carried
by $\tau$. 
If $\tau$ is of topological type 
$(m_1,\dots,m_\ell;-m)$, 
if $\nu\in {\cal L\cal L}(m_1,\dots,m_\ell;-m)$
is carried by $\tau$ and if $e$ is a large branch
of $\tau$, then there is a unique choice of a right or
left split of $\tau$ at $e$ such that the split track $\eta$ 
carries $\nu$. In particular, $\eta$ is fully recurrent. 
We refer to p.48-49 of \cite{H22} for a more detailed discussion.
Note that unlike in the standard reference \cite{PH92}, we do \emph{not}
allow to modify a train track with a \emph{collision}, which is defined to be
a split followed by the removal of the diagonal branch of the split and
which strictly reduces the number of branches of the train track.
\begin{figure}[ht]
\begin{center}
\psfrag{e}{$e$}
\psfrag{b}{$c_3$}
\psfrag{c}{$c_5$}
\psfrag{Figure A}{Figure A} 
\includegraphics[width=0.8\textwidth]{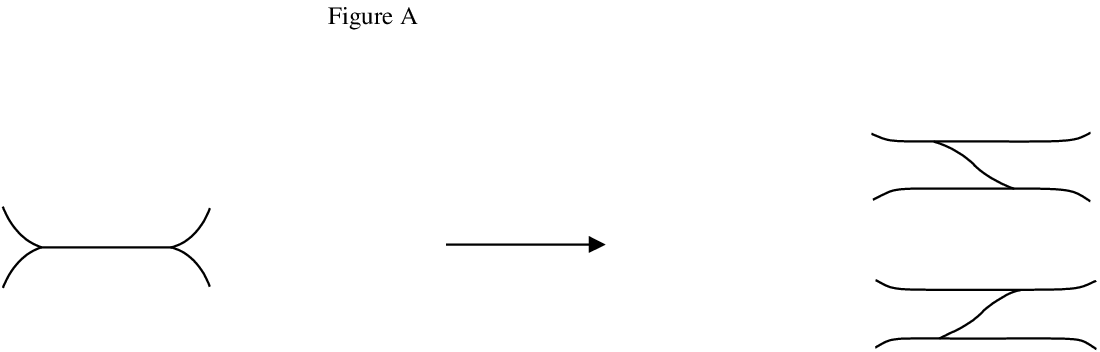}
\end{center}
\end{figure}

To each train track $\tau$ 
which fills up $S$ one can
associate a \emph{dual bigon track} $\tau^*$ 
(Section 3.4 of \cite{PH92}).
There is a bijection between
the complementary components of $\tau$ and those
complementary components of $\tau^*$ which are
not \emph{bigons}, i.e. discs with two cusps at the
boundary. This bijection maps
a component $C$ of $\tau$ which is an $n$-gon for some
$n\geq 3$ to an $n$-gon component of 
$\tau^*$ contained in $C$, and it maps a once punctured
monogon $C$ to a once punctured monogon contained in $C$.
If $\tau$ is orientable then the orientation of $S$ and
an orientation of $\tau$ induce an orientation on 
$\tau^*$, that is, $\tau^*$ is orientable.

There is a notion of carrying for bigon tracks which 
is analogous to the notion of carrying for train tracks. 

\begin{definition}[Definition 2.8 of \cite{H22}]\label{largett}
A train track $\tau$ of topological type \\
$(m_1,\dots,m_\ell;-m)$ 
is called \emph{large} if 
both $\tau,\tau^*$ carry a minimal  
large geodesic lamination of the same topological type as $\tau$. 
\end{definition}

For a large train track $\tau$ let 
${\cal V}^*(\tau)\subset {\cal M\cal L}$ 
be the set of all measured geodesic
laminations whose support is carried by $\tau^*$. 
Then 
${\cal V}^*(\tau)$ is naturally homeomorphic to 
a convex cone in a real vector space. The dimension of this cone
coincides with the dimension of ${\cal V}(\tau)$.

Denote by 
\[{\cal L\cal T}(m_1,\dots,m_\ell;-m)\] the set of all
isotopy classes of large train tracks on $S$ of type
$(m_1,\dots,m_\ell;-m)$.

\subsection{Strata}\label{subsec:strata}

For a closed oriented surface 
$S$ of genus $g\geq 0$ with $m\geq 0$ punctures
let $\tilde{\cal Q}(S)$ be the
bundle of marked area one holomorphic
quadratic differentials with a simple pole at each puncture
over the Teichm\"uller space ${\cal T}(S)$ 
of marked complex structures
on $S$.
For a complete hyperbolic metric on $S$ of
finite area, an area one quadratic differential 
$q\in \tilde{\cal Q}(S)$ is 
determined by a pair $(\lambda^+,\lambda^-)$ of 
measured geodesic laminations 
which \emph{bind} $S$, that is, 
we have 
\[\iota(\lambda^+,\mu)+
\iota(\lambda^-,\mu)>0\] for every measured geodesic
lamination $\mu$, moreover  
$\iota(\lambda^+,\lambda^-)=1$ as the area of $q$ equals one. 
The \emph{vertical} measured geodesic 
lamination $\lambda^+$ for $q$
corresponds to the equivalence class of the vertical measured
foliation of $q$. 
The \emph{horizontal} measured geodesic lamination
$\lambda^-$ for $q$ corresponds to the equivalence
class of the horizontal measured foliation of $q$.

Recall from the introduction the definition of the
\emph{stratum} $\tilde{\cal Q}(m_1,\dots,m_\ell;-m)$ of 
$\tilde{\cal Q}(S)$
where $(m_1,\dots,m_\ell)$ of positive integers 
$1\leq m_1\leq \dots \leq m_\ell$ with $\sum_im_i=4g-4+m$.
Also consider as before the bundle 
$\tilde{\cal H}(S)$ of marked area one
holomorphic one-forms over Teichm\"uller space
${\cal T}(S)$ of $S$ with its strata 
$\tilde{\cal H}(k_1,\dots,k_\ell)$ of marked area one holomorphic one-forms
on $S$ with $\ell$ zeros of order $k_i$ $(i=1,\dots,\ell)$.

%
%
%
%
For a large train track $\tau\in {\cal L\cal T}(m_1,\dots,m_\ell;-m)$ 
let 
\begin{equation}\label{qtau}
{\cal Q}(\tau)\subset \tilde{\cal Q}(S)\end{equation} be 
the set of all marked area one quadratic differentials 
whose vertical measured geodesic lamination  
is carried by $\tau$ and whose horizontal 
measured geodesic lamination is carried by the dual
bigon track $\tau^*$ of $\tau$.
By definition of a large train track, 
we have ${\cal Q}(\tau)\not=\emptyset$.

The next proposition relates 
${\cal Q}(\tau)$ to components of strata. 

\begin{proposition}[Proposition 3.2 and Proposition 3.3 of \cite{H22}]\label{structure}
\begin{enumerate}
\item 
For any large non-orientable train track
$\tau\in {\cal L\cal T}(m_1,\dots,m_\ell;-m)$ 
there is a component $\tilde{\cal Q}$ 
of the stratum
$\tilde{\cal Q}(m_1,\dots,m_\ell;-m)$ such that
${\cal Q}(\tau)$ is the closure 
in $\tilde{\cal Q}(S)$  of 
an open path connected subset of $\tilde{\cal Q}$.
\item For every large orientable train track 
$\tau\in {\cal L\cal T}(m_1,\dots,m_\ell;0)$ 
there is a component $\tilde{\cal Q}$ of the stratum
$\tilde{\cal H}(m_1/2,\dots,m_\ell/2)$
such that ${\cal Q}(\tau)$ is the closure in $\tilde {\cal H}(S)$ of 
an open path connected subset of $\tilde{\cal Q}$.
\item For every component $\tilde {\cal Q}$ of a stratum
of $\tilde {\cal Q}(S)$ (or of a stratum of $\tilde {\cal H}(S)$) 
and for every $q\in \tilde {\cal Q}$ there is a large
train track $\tau$ such that $q\in {\cal Q}(\tau)$ and 
that $Q(\tau)$ is the closure 
of the open dense path connected subset  
$\tilde {\cal Q}\cap {\cal Q}(\tau)$.
\end{enumerate}
\end{proposition}


\section{A symbolic system}\label{sec:asymbolic} 

In this section we
construct a subshift of finite type which is 
used in the following sections to construct a symbolic coding
for the Teichm\"uller flow on a component 
of a stratum 
in the moduli space of quadratic or abelian differentials.
We continue to use the assumptions and notations
from Section \ref{sec:strata}. The section is divided into two subsections.

\subsection{A shift space constructed from train tracks}\label{sec:ashift}
Let ${\cal Q}$ be a connected component of a 
stratum ${\cal Q}(m_1,\dots,m_\ell;-m)$ of ${\cal Q}(S)$
(or of a stratum
${\cal H}(m_1/2,\dots,m_\ell/2)$ of ${\cal H}(S)$).  Let $\tilde {\cal Q}$
be the preimage of ${\cal Q}$ in $\tilde{\cal Q}(S)$
(or in $\tilde {\cal H}(S)$).
Let 
\begin{equation}\label{lt}
{\cal L\cal T}({\cal Q})\subset {\cal L\cal T}(m_1,\dots,m_\ell;-m)\end{equation}
be the set of all marked large train tracks $\tau$ of the same 
topological type as ${\cal Q}$ 
such that the set 
${\cal Q}(\tau)$ defined in (\ref{qtau}) is the
closure of the open dense subset
${\cal Q}(\tau)\cap \tilde{\cal Q}$.
Proposition \ref{structure}
shows that this is well defined and that furthermore 
\[\tilde {\cal Q}=\cup_{\tau\in {\cal L\cal T}({\cal Q})} ({\cal Q}(\tau)\cap \tilde {\cal Q}).\]
The set ${\cal L\cal T}({\cal Q})$ is
invariant under the action of the mapping class group.


Fix a complete finite volume metric on $S$. 
For ease of notation, define 
\begin{equation}\label{ll}
{\cal L\cal L}({\cal Q})\subset {\cal L\cal L}(m_1,\dots,m_\ell;-m)\end{equation}
to be the closure (in the restriction of the 
Hausdorff topology defined by the hyperbolic metric) of the 
set of all minimal large geodesic laminations which 
can be represented as the support of the  
vertical measured geodesic lamination
of some  quadratic differential $q\in \tilde {\cal Q}$. 
Then ${\cal L\cal L}({\cal Q})$ is invariant under the 
action of ${\rm Mod}(S)$, and for every 
$\tau\in {\cal L\cal T}({\cal Q})$ it contains the set of all 
large geodesic laminations of topological type 
$(m_1,\dots,m_\ell;-m)$ carried by $\tau$.
We refer to Section 3 of \cite{H22} for a more detailed discussion.

Our goal is to use the train tracks from the collection 
${\cal L\cal T}({\cal Q})$  for a symbolic
coding of the Teichm\"uller flow on ${\cal Q }$.
However, the mapping class
group ${\rm Mod}(S)$ does not act freely
on ${\cal L\cal T}({\cal Q})$. 
To overcome this difficulty
we extend 
the definition of a large train track 
as follows.

\begin{definition}\label{numberedlarge}
A \emph{numbered marked large train track}
is a marked large train track $\tau$ together with
a numbering of the branches of $\tau$.
\end{definition}

The set 
\begin{equation}\label{numb}
{\cal N\cal T}({\cal Q})\end{equation} of all isotopy
classes of numbered marked large train tracks on $S$
whose underlying unnumbered large train track is 
contained in ${\cal L\cal T}({\cal Q})$
is invariant
under the natural action of the mapping class group.

A mapping class which preserves a marked large train track
$\tau$ as well as each of
its branches is the identity. Namely, such a mapping class 
can be represented by a homeomorphism of $S$ whose restriction to 
$\tau$ is the identity. Since all complementary components of 
$\tau$ are discs or once punctured discs, such a homeomorphism is 
homotopic to the identity. Thus the action of the 
mapping class group on ${\cal N\cal T}({\cal Q})$ is free.

Define 
a \emph{(numbered) combinatorial type}
to be an orbit of a (numbered) marked large train track in ${\cal L\cal T}({\cal Q})$
(or in ${\cal N\cal T}({\cal Q})$)
under
the action of the mapping class group.
Thus the set  
of numbered combinatorial
types is the quotient of ${\cal N\cal T}({\cal Q})$ 
by the action of the
mapping class group.
Let \[{\cal E}_0({\cal Q})\]
be the set of all numbered combinatorial types which 
are ${\rm Mod}(S)$-orbits of elements of 
${\cal N\cal T}({\cal Q})$.
%


Note that if the large train track $\tau^\prime$
can be obtained from a 
large train track $\tau$ by
a single split, then a numbering of the
branches of $\tau$ naturally induces a numbering of the
branches of $\tau^\prime$ and therefore
such a numbering defines a
\emph{numbered split}. 

\begin{definition}\label{fullsplit}
A \emph{full split} of a (numbered) 
large train track $\tau$ is a (numbered)
large train track $\tau^\prime$ which 
can be obtained from $\tau$ by splitting $\tau$ at each
large branch precisely once.
\end{definition}

A \emph{full (numbered) splitting sequence} 
is a sequence $(\tau_i)$ of (numbered)
large train tracks such that for each $i$, the (numbered)
large train track $\tau_{i+1}$ can be obtained from
$\tau_i$ by a full (numbered) split.

\begin{definition}\label{def:splittable}
A numbered combinatorial type $x\in {\cal E}_0({\cal Q})$ is
\emph{splittable} to a numbered combinatorial type $x^\prime$
if there is a numbered large train track $\tau$ 
contained in $x$ 
which can be
connected to a numbered large
train track $\tau^\prime$ contained in $x^\prime$ by a
\emph{full} numbered splitting sequence. 
\end{definition}

In general it is unclear whether a given numbered 
combinatorial type is splittable to another type. This issue is addressed in 
Lemma \ref{numberedtype} below which is a main technical
ingredient towards the construction of a subshift of finite type with the
properties stated in Theorem \ref{coding}. 

Given a numbered marked train track $\tau$ and a subset ${\cal W}$ of 
${\cal E}_0({\cal Q})$, we write $[\tau]\in {\cal W}$ if 
the ${\rm Mod}(S)$-orbit of $\tau$ is contained in ${\cal W}$. 
The first statement in the following lemma is crucial for topological 
transitivity of the subshift of finite type which defines our coding
of the Teichm\"uller flow.

\begin{lemma}\label{numberedtype} 
For every connected component ${\cal Q}$ of a stratum of ${\cal Q}(S)$
(or of a stratum of ${\cal H}(S)$)
there is a set ${\cal E}({\cal Q})
\subset {\cal E}_0({\cal Q})$ 
of numbered combinatorial types with the following properties.
\begin{enumerate}
\item For all $x,x^\prime\in 
{\cal E}({\cal Q})$, $x$ is splittable to $x^\prime$.
\item If $\tau$ is contained in ${\cal E}({\cal Q})$ and
if $(\tau_i)$ is any full numbered splitting
sequence issuing from $\tau_0=\tau$ then $\tau_i$ is
contained in ${\cal E}({\cal Q})$
for all $i\geq 0$.
\end{enumerate}
\end{lemma}
\begin{proof}
For $[\beta]\in {\cal E}_0({\cal Q})$ 
let ${\cal A}([\beta])\subset {\cal E}_0({\cal Q})$ be the set of 
all combinatorial types of numbered large 
train tracks $\xi$ with the following additional property.
There is 
a representative $\beta\in {\cal N\cal T}({\cal Q})$ of $[\beta]$ 
which can be connected to $\xi$ by  
a (possibly trivial) full numbered splitting sequence.
Since the concatenation of two 
full splitting sequences is a full splitting sequence, if 
$[\xi]\in {\cal A}([\beta])$ then ${\cal A}([\xi])\subset {\cal A}([\beta])$.

Since ${\cal E}_0({\cal Q})$ is a finite set, there exists 
some $[\sigma]\in {\cal E}_0({\cal Q})$ such that the cardinality of 
${\cal A}([\sigma])$ is minimal among the cardinalities of the 
sets ${\cal A}([\beta])$ where $[\beta]$ ranges through ${\cal E}_0({\cal Q})$.
Let $[\xi]\in {\cal A}([\sigma])$. Since ${\cal A}([\xi])\subset {\cal A}([\sigma])$ and 
the cardinality of ${\cal A}([\sigma|)$ is minimal, we know that 
${\cal A}([\xi])= {\cal A}([\sigma])$ and, in particular, 
$[\sigma]\in {\cal A}([\xi])$. As $[\xi]\in {\cal A}([\sigma])$ was arbitrary, we conclude 
that 
for all $\xi,\xi^\prime\in {\cal N\cal T}({\cal Q})$ which are
contained  in ${\cal A}([\sigma])$, there
is a full numbered splitting sequence connecting
$\xi$ to a train track $g\xi^\prime$ for some $g\in {\rm Mod}(S)$.

Define 
\begin{equation}
{\cal E}({\cal Q})={\cal A}([\sigma])\subset {\cal E}_0({\cal Q}).\end{equation}
Let $\theta\in {\cal N\cal T}({\cal Q})$ with  
$[\theta]\in {\cal E}({\cal Q})$.
By the above discussion, 
the train track $\theta$ can be connected to a
numbered train track $\sigma^\prime$ in the ${\rm Mod}(S)$-orbit of $\sigma$ 
by a full numbered splitting sequence.
Thus the first property in the lemma holds
true for ${\cal E}({\cal Q})$, and the second
is true by the definition of ${\cal E}({\cal Q})$. 
This completes the proof of the lemma.
\end{proof}

Let $k>0$ be the cardinality
of the set ${\cal E}({\cal Q})\subset {\cal E}_0({\cal Q})$ as in 
Lemma \ref{numberedtype}. 
Number the $k$ elements of ${\cal E}({\cal Q})$ in an arbitrary
way. Identify each element of ${\cal E}({\cal Q})$ with its number.
Define $a_{ij}=1$ if the numbered combinatorial type $i$
can be split with a single full numbered
split to the numbered combinatorial type
$j$ and define $a_{ij}=0$ otherwise.
The matrix $A=(a_{ij})$ defines
a \emph{subshift of finite type}. Its phase space
is the set of biinfinite sequences
\[\Omega=\{(x_i)\in \prod_{i=-\infty}^\infty
\{1,\dots,k\} \mid a_{x_ix_{i+1}}=1 \text{ for all } i\}.\] 
Every biinfinite full numbered splitting
sequence $(\tau_i)\subset {\cal N\cal T}({\cal Q})$
contained in ${\cal E}({\cal Q})$ defines a
point in $\Omega$. Vice versa, since 
the action
of ${\rm Mod}(S)$ on the set of numbered
large train tracks is free, a point $(x_i)\in \Omega$
determines a ${\rm Mod}(S)$-orbit
of biinfinite full numbered splitting sequences.
We say that such a full numbered
splitting sequence \emph{realizes} $(x_i)$.

The shift map $\sigma:\Omega\to \Omega, \sigma(x_i)=(x_{i+1})$
acts on $\Omega$.
For $n>0$ write
$A^n=(a_{ij}^{(n)})$; the shift $\sigma$
is \emph{topologically transitive}
if for all $i,j$ there is some $n >0$ such that
$a_{ij}^{(n)}>0$. 
Define a finite sequence $(x_i)_{0\leq i\leq n}$ of
points $x_i\in \{1,\dots,k\}$ to be
\emph{admissible} if $a_{x_ix_{i+1}}=1$ for all $i\leq n-1$.
Then $a_{ij}^{(n)}$ equals the number of all
admissible sequences of length $n$ connecting $i$ to $j$
\cite{Mn87}.
The following
observation is immediate from the definitions.

\begin{lemma}\label{shift} The shift $(\Omega,\sigma)$ is topologically
	transitive.
\end{lemma}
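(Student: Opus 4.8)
The plan is to show that $(\Omega,\sigma)$ is topologically transitive by exhibiting, for any two symbols $i,j\in\{1,\dots,k\}$ (i.e.\ any two numbered combinatorial types in ${\cal E}({\cal Q})$), an admissible word from $i$ to $j$, which by definition of the transition matrix $A$ amounts to producing a full numbered splitting sequence starting at a representative of $i$ and passing through a representative of $j$. The key input is property (1) of Lemma \ref{numberedtype}: every $x\in{\cal E}({\cal Q})$ is splittable to every $x'\in{\cal E}({\cal Q})$. In particular $i$ is splittable to $j$, so there is a numbered large train track $\tau$ in the combinatorial type $i$ that can be connected by a \emph{full} numbered splitting sequence to a numbered large train track $\tau'$ in the combinatorial type $j$.

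First I would unwind what ``$i$ is splittable to $j$'' gives combinatorially: a finite full numbered splitting sequence $\tau=\tau_0,\tau_1,\dots,\tau_n=\tau'$ where each $\tau_{r+1}$ is obtained from $\tau_r$ by a full numbered split. By property (2) of Lemma \ref{numberedtype}, every intermediate $\tau_r$ still lies in a combinatorial type belonging to ${\cal E}({\cal Q})$; denote that type by $x_r\in\{1,\dots,k\}$, so $x_0=i$ and $x_n=j$. By the very definition of the matrix $A=(a_{pq})$ — $a_{pq}=1$ precisely when type $p$ can be taken to type $q$ by a single full numbered split — we get $a_{x_rx_{r+1}}=1$ for all $r$, i.e.\ $(x_r)_{0\le r\le n}$ is an admissible sequence of length $n$ connecting $i$ to $j$. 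Since the number of admissible length-$n$ words from $i$ to $j$ equals $a^{(n)}_{ij}$ (as recalled just before the statement, citing \cite{Mn87}), this forces $a^{(n)}_{ij}>0$. As $i,j$ were arbitrary, $(\Omega,\sigma)$ is topologically transitive.

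The only genuine subtlety — and the ``main obstacle'' if there is one — is a bookkeeping point about whether ``splittable'' as used in Lemma \ref{numberedtype} really records each intermediate step as an element of ${\cal E}({\cal Q})$ and each single step as an allowed transition in $A$. This is handled exactly by invoking property (2) of Lemma \ref{numberedtype} (the set ${\cal E}({\cal Q})$ is closed under full numbered splitting) together with the construction of $A$, so that a full splitting sequence realizing ``$i$ splittable to $j$'' decomposes as a concatenation of single full numbered splits each of which is, by fiat, a transition with $a_{\cdot\cdot}=1$. One should also note that realizations of points of $\Omega$ are only defined up to the ${\rm Mod}(S)$-action, but this is harmless: transitivity is a purely combinatorial statement about $A$, and the freeness of the ${\rm Mod}(S)$-action (used to define $\Omega$) guarantees the correspondence between symbolic words and ${\rm Mod}(S)$-orbits of splitting sequences is well behaved. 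Thus the proof is essentially immediate from Lemma \ref{numberedtype}(1)--(2) and the definition of $A$.
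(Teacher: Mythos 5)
Your proof is correct and takes essentially the same route as the paper: invoke Lemma \ref{numberedtype}(1) to get a full numbered splitting sequence from a representative of type $i$ to one of type $j$, and read off the resulting admissible word to conclude $a^{(n)}_{ij}>0$. You make explicit the role of Lemma \ref{numberedtype}(2) in keeping intermediate types inside ${\cal E}({\cal Q})$, which the paper leaves implicit; the only micro-gap is that for $i=j$ one should note the splitting sequence can be taken nontrivial (concatenate $i\to j'\to i$ for some $j'\neq i$, or use that the construction in Lemma \ref{numberedtype} produces nontrivial sequences), since topological transitivity requires $n>0$.
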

\begin{proof}
	Let $i,j\in \{1,\dots,k\}$ be arbitrary.
	By Lemma \ref{numberedtype}, there is a nontrivial
	finite full numbered splitting sequence
	$\{\tau_i\}_{0\leq i\leq n}\subset {\cal N\cal T}({\cal Q})$
	connecting a
	train track $\tau_0$ of numbered combinatorial type
	$i$ to a train track $\tau_n$ of
	numbered combinatorial type $j$.
	This splitting sequence then
	defines an admissible 
	sequence $(x_i)_{0\leq i\leq n}$ connecting
	$i$ to $j$. \end{proof}

\begin{remark}
	Without loss of generality, we can in fact
	assume that the shift $(\Omega,\sigma)$ is \emph{topologically 
		mixing}, that is, there exists a number $n>0$ so that
	the matrix $A^n$ is positive. Namely, by the discussion on p.55 of 
	\cite{HK95}, otherwise there are numbers $\ell,n>0$ such that
	$\ell n=k$ and that the following holds true.
	The elements of 
	${\cal E}({\cal Q})$ are divided into $n$ disjoint
	sets $C_1,\dots,C_n$ 
	of $\ell$ elements each so that for 
	$x_i\in C_j$ we have $a_{x_ix_{i+1}}=1$ only
	if $x_{i+1}\in C_{j+1}$ (indices are taken modulo $n$).
	Moreover, the restriction of $\sigma^n$ to $C_1$ 
	is topologically mixing. However, in this case we can 
	repeat the argument in the proof of Lemma \ref{numberedtype}
	with a single full numbered split replaced by a full numbered
	splitting sequence of length $n$. This amounts to replacing
	$(\Omega,\sigma)$ by the topologically mixing subshift
	$(C_1,\sigma^n)$ which 
	has all properties stated above.
\end{remark}

\subsection{Relation to the Teichm\"uller flow}\label{sec:relteich}
In this subsection we connect the subshift of finite type $(\Omega,\sigma)$ constructed in 
Subsection \ref{sec:ashift} to the Teichm\"uller flow on ${\cal Q}$.
To this end 
we shall make use of the following simple consequence of a celebrated result of Masur
(Theorem 1.1 of \cite{M92})
about the structure of differentials
$q\in {\cal Q}$ which are recurrent under the Teichm\"uller flow. 

\begin{lemma}\label{minimalfull}
Let $q\in {\cal Q}$ be a point whose forward orbit $\Phi^tq$ $(t\geq 0)$ 
under the Teichm\"uller flow returns to a compact set
$K\subset {\cal Q}$ for arbitrarily large times.
Then the vertical measured
geodesic lamination of a lift of $q$ to $\tilde {\cal Q}$ is 
uniquely ergodic, with support in 
${\cal L\cal L}({\cal Q})$.
\end{lemma}
\begin{proof}  Unique ergodicity of the vertical measured geodesic lamination 
of a forward recurrent 
differential $q \in {\cal Q}$
is Theorem 1.1 of \cite{M92}. 

To see that  the support of the measured geodesic lamination  of a lift 
$\tilde q\in \tilde {\cal Q}$ of $q$ is contained in 
${\cal L\cal L}({\cal Q})$ assume otherwise. Then
$q$ admits at least one vertical saddle connection.
This saddle connection has finite length. 

By continuity, for every compact set $K\subset {\cal Q}$ there exists 
a number $\kappa(K)>0$ which bounds from below the minimal length of 
any saddle connection for a differential $u\in K$.
Now the length of a vertical
saddle connection is exponentially decreasing under the 
Teichm\"uller flow and therefore the orbit of $q$ does not
return to $K$ for arbitrarily large times. This completes the proof.
\end{proof}

The union 
$\cup_{[\tau]\in  {\cal E}({\cal Q})} {\cal Q}(\tau)\cap \tilde {\cal Q}$
 is clearly invariant under the Teichm\"uller flow and under the action of
 the mapping class group, and it contains a non-empty open invariant subset of $\tilde {\cal Q}$.
 However, this does not imply that it projects to a subset of ${\cal Q}$ 
 which contains the support of every $\Phi^t$-invariant Borel probability measure. 
 The next lemma provides the relevant additional information we need,

\begin{lemma}\label{numberedtype2}
For every $q\in {\cal Q}$ 
without 
vertical saddle connections 
and every lift
$\tilde q$ of $q$ to $\tilde {\cal Q}$ 
there is some $\tau\in {\cal N\cal T}({\cal Q})$ such that
$[\tau]\in {\cal E}({\cal Q})$ and 
$\tilde q\in {\cal Q}(\tau)$.
\end{lemma}
\begin{proof}
Let $q\in {\cal Q}$ and 
assume that $q$ does not have 
vertical saddle connections. 
Let $\tilde q\in \tilde {\cal Q}$
be a lift of $q$. 
By Proposition \ref{structure} and using the notation (\ref{lt}), 
there is a train track $\eta\in {\cal L\cal T}({\cal Q})$
so that 
$\tilde q\in {\cal Q}(\eta)$.
Recall that ${\cal Q}(\eta)\cap \tilde {\cal Q}$ contains an open subset of  
 $\tilde {\cal Q}$ which projects to an open subset of 
 ${\cal Q}$. Since the set of all points in 
${\cal Q}$ whose $\Phi^t$-orbits are dense in ${\cal Q}$ has
full Lebesgue measure and hence is dense, there is 
a point $z\in {\cal Q}$ whose  
$\Phi^t$-orbit is dense 
in ${\cal Q}$ in forward and backward direction and which
admits a lift 
$\tilde z\in {\cal Q}(\eta)$.
 
By the definition of ${\cal Q}(\eta)$, 
the support of the horizontal measured geodesic
lamination $\tilde z^h$ of $\tilde z$ is  carried
by the dual $\eta^*$ of $\eta$. Furthermore, by Lemma \ref{minimalfull},
applied to the time reversal $t\to \Phi^{-t}$ of the Teichm\"uller flow,  
the support of $\tilde z^h$ fills $S$.
Thus since the support of the vertical 
measured geodesic lamination $\tilde q^v$ of $\tilde q$ is carried by 
$\eta$, the measured geodesic laminations $\tilde q^v,\tilde z^h$ bind $S$.
Hence there is a 
quadratic differential $\tilde u\in {\cal Q}(\eta)$ 
whose horizontal measured
geodesic lamination equals $\tilde z^h$ and whose
vertical measured geodesic lamination 
equals $c\tilde q^v$ for a number $c>0$. 
Proposition \ref{structure} implies that
$\tilde u\in \tilde {\cal Q}$.
 
Since the orbit of $z$ under the time reversal of the 
Teichm\"uller flow
is dense in ${\cal Q}$, and since $\tilde u,\tilde z$ have the same horizontal 
measured laminations, Theorem 2 of \cite{M80} shows that 
we have $d(\Phi^{-t}\tilde u,\Phi^{-t}\tilde z)\to 0$
$(t\to \infty)$ for any distance function $d$ on $\tilde {\cal Q}$ which is 
induced by a
complete ${\rm Mod}(S)$-invariant
Riemannian metric on 
$\tilde{\cal Q}$.

On the other hand, let $\sigma \in {\cal N\cal T}({\cal Q})$ 
be contained in ${\cal E}({\cal Q})$.
Then ${\cal Q}(\sigma)$ contains an open subset of 
$\tilde {\cal Q}$.  Using once more the fact that
the backward orbit of 
$z$ is dense in ${\cal Q}$, there is some
$g\in {\rm Mod}(S)$ and some $T>0$ 
so that $g \Phi^{-T}\tilde u\in 
{\cal Q}(\sigma)$ and hence $g\tilde u\in {\cal Q}(\sigma)$ 
by invariance.  
In particular, the vertical measured geodesic
lamination $g(c\tilde q^v)$ of $g\tilde u$ is carried by
$\sigma$. Equivalently, $\tilde q^v$ is carried by $g^{-1}\sigma$.

Now $\tilde q$ does not have any vertical saddle connection by assumption 
and hence the complementary components of the vertical measured 
geodesic lamination $\tilde q^v$ of $\tilde q$ are in bijection with the
zeros of $\tilde q$. Then 
$\tilde q^v$ is minimal,
with support in the set ${\cal L\cal L}({\cal Q})$ defined in (\ref{ll}).
Together with Lemma 5.1 of \cite{H09a}, we 
obtain that 
there is a unique full numbered splitting sequence 
$(\tau_i)\subset {\cal N\cal T}({\cal Q})$ beginning at
$\tau_0=g^{-1}\sigma$ which consists of train tracks
carrying $\tilde q^v$. By the second statement in Lemma \ref{numberedtype}, 
each of the train tracks
$\tau_i$ is contained in ${\cal E}({\cal Q})$. 

We claim that for sufficiently large $i$ we have $\tilde q\in {\cal Q}(\tau_i)$.
This then completes the proof of the lemma. To this end 
note that $\tilde q^v$ is carried by both $\eta,\tau_0$, and the 
topological type of its support coincides with the topological type of 
$\eta,\tau_0$. Thus by Corollary 2.4.3 of \cite{PH92}, there exists a train track 
$\nu$ which can be obtained from both $\eta,\tau_0$ by a splitting and 
collision sequence (where a collision is a split followed by the removal of the 
diagonal) 
and which carries $\tilde q^v$. Since the topological type of $\tilde q^v$ coincides with the
topological type of $\eta$ and $\tilde q^v$ is carried by $\nu$, the 
 topological type of $\nu$ coincides 
with the topological type of $\eta,\tau_0$ and hence there is no collision in 
the transformation of $\tau_0$ to $\nu$. 

Now by uniqueness of splitting sequences as established
in Lemma 5.1 of \cite{H09a} and the fact that the (numbered) 
splitting sequence $(\tau_i)$ is full, 
there exists some $j>0$ such that
$\nu$ is splittable to $\tau_j$ and hence $\eta$ is splittable to $\tau_j$. 
As $\tilde q\in {\cal Q}(\eta)$, the horizontal 
measured geodesic lamination $\tilde q^h$ of $\tilde q$ is carried by 
$\eta^*$ and hence by $\tau_j^*$. But this means that $\tilde q\in {\cal Q}(\tau_j)$.
As $[\tau_j]\in {\cal E}({\cal Q})$, this completes the proof of the lemma.
\end{proof}

\begin{remark}
We do not know whether there are components
${\cal Q}$ with ${\cal E}({\cal Q})=
{\cal E}_0({\cal Q})$. It seems likely that such components do not exist as
we expect that for $\eta\in {\cal E}({\cal Q})$, not all permutations 
of the numberings of the branches of $\eta$ are contained in ${\cal E}({\cal Q})$.
\end{remark}

\section{Symbolic dynamics for the Teichm\"uller flow}\label{sec:symbolicdyn}

In this section we relate the subshift of finite
type $(\Omega,\sigma)$ constructed in Section \ref{sec:asymbolic} to the
Teichm\"uller flow $\Phi^t$ on the component ${\cal Q}$ of a stratum. 
The section is divided into four subsections. 

In Section \ref{sec:rooffunction}, we 
define a $\sigma$-invariant Borel subset ${\cal U}_+$ of the biinfinite shift space $(\Omega,\sigma)$ called 
\emph{uniquely
ergodic sequences}, and we define a continuous 
bounded roof function $\rho$  
on the set ${\cal U}_+$ only depending on the future. The set ${\cal U}_+$ will be a superset of the 
Borel set ${\cal U}$ which appears in Theorem \ref{coding}, but  
a priori, the set may be empty. 

In Section \ref{sec:normal}, we establish that normal sequences in $(\Omega,\sigma)$ are contained in 
${\cal U}_+$. We then choose ${\cal U}$ to be the intersection of ${\cal U}_+$ with its
mirror image defined by time reversal of the shift. All normal sequences are contained in 
${\cal U}$, in particular, ${\cal U}$ is not empty, and the suspension of ${\cal U}$ with respect to 
the roof function $\rho$ is defined.

In Section \ref{sec:semiconj} we construct a map ${\cal U}\to {\cal Q}$ and establish
that is defines a finite-to one semi-conjugacy $\Xi$ of the suspension flow over ${\cal U}$ 
with roof function $\rho$ onto a
$\Phi^t$-invariant Borel subset of ${\cal Q}$. Finally in Section \ref{sec:suronmeas} we study the 
action of $\Xi$ on invariant measures for the suspension flow and complete the proof of 
Theorem \ref{coding}.
Throughout, 
we continue to use the 
assumptions and notations from Section \ref{sec:strata} and Section \ref{sec:asymbolic}.

\subsection{A roof function for the shift space}\label{sec:rooffunction}

Let as before ${\cal Q}$ be a component of a stratum.
Let ${\cal N\cal T}({\cal Q})$ be as in (\ref{numb}) and let $p>0$ be
the number of branches of a train track in ${\cal N\cal T}({\cal Q})$.
For $\tau\in {\cal N\cal T}({\cal Q})$ 
let as before ${\cal V}(\tau)$ be the space of 
all measured geodesic laminations carried by $\tau$,
equipped with the topology as the subcone of $\mathbb{R}^p$ 
of nonnegative 
functions on the branches of $\tau$ which satisfy the switch condition.
For each $\mu\in {\cal V}(\tau)$ we denote by 
\begin{equation}\label{totalmass}  
\mu(\tau)=\sum_{b\subset \tau}\mu(b)\end{equation} 
the total mass
of $\mu$, that is, the sum of the weights of $\mu$ over all branches 
of $\tau$. Define 
\[{\cal V}^{\cal P}(\tau)\subset {\cal V}(\tau)\]
to be the subspace of transverse (probability) measures of total mass $1$.

Denote by $P{\cal V}(\tau)$ 
the space of all \emph{projective} measured 
geodesic laminations which are carried by $\tau$. 
Note that $P{\cal V}(\tau)$ is a
\emph{compact} subset of the compact space ${\cal P\cal M\cal L}$
of all projective measured geodesic laminations on $S$. 

Let $(\tau_i)_{0\leq i}\subset {\cal N\cal T}({\cal Q})$ 
be any full numbered
splitting sequence. Then we have $\emptyset\not=P{\cal V}(\tau_{i+1})
\subset P{\cal V}(\tau_i)$ and hence
$\bigcap_iP{\cal V}(\tau_i)$ is a non-empty 
compact subset of ${\cal P\cal M\cal L}$. If
$\bigcap_iP{\cal V}(\tau_i)$
consists of a unique point, and the support of this projective
measured lamination is of the same topological type as $\tau_0$, 
then 
we call $(\tau_i)$
\emph{uniquely ergodic}. 

\begin{definition}\label{uniquelyergodic}
The sequence $(x_i)\in \Omega$ is called \emph{uniquely ergodic} if 
some (and hence every) 
full numbered splitting sequence $(\tau_i)\subset {\cal N\cal T}({\cal Q})$ 
which realizes
$(x_i)$ is uniquely ergodic.
\end{definition}

If $(\tau_i)\subset {\cal N\cal T}({\cal Q})$ realizes a uniquely ergodic sequence
$(x_i)\in \Omega$ then
for every $i$, a transverse measure on 
$\tau_i$ defined
by a point $\zeta\in \bigcap_i {\cal V}(\tau_i)$ 
is positive on every branch of $\tau_i$. Moreover, by Lemma 5.1 of \cite{H09a}, the 
sequence $(\tau_i)$ is uniquely determined
by $\tau_0$ and $\zeta$.

Let 
\begin{equation}\label{uniquelyer}
{\cal U}_+\subset \Omega\end{equation}
be the set of all
uniquely ergodic sequences. We define a
function $\rho:{\cal U}_+\to \mathbb{R}$
as follows. For $(x_i)\in {\cal U}_+$ choose
a full numbered splitting 
sequence $(\tau_i)\subset {\cal N\cal T}({\cal Q})$ 
which realizes $(x_i)$. 
Let $\mu\in  \bigcap_{i\geq 0}{\cal V}(\tau_i)$ be
carried by each of the train tracks $\tau_i$ and note that $\mu$ is uniquely
determined by this requirement up to scaling. 
Define 
\begin{equation}\label{rho}
\rho(x_i)=-\log (\mu(\tau_1)/\mu(\tau_0)).\end{equation} 
By equivariance
under the action of the mapping class group,
the number $\rho(x_i)\in (0,\infty)$ only depends on the
sequence $(x_i)\in {\cal U}_+$. In other words, $\rho$ is 
a function defined on ${\cal U}_+$. 
We have

\begin{lemma}\label{bounded} 
The function $\rho$ maps ${\cal U}_+$ to $(0,p\log 2]$. 
\end{lemma}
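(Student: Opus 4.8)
The plan is to bound $\rho(x_i) = -\log\mu(\tau_1)$ for a probability transverse measure $\mu \in {\cal V}_0(\tau_0)$ carried by every $\tau_i$, using the fact that a full split changes total weight in a controlled way. The two inequalities $0 < \rho \le p\log 2$ correspond to: (a) a full split strictly decreases the total weight of a transverse measure, so $\mu(\tau_1) < \mu(\tau_0) = 1$, giving $\rho(x_i) > 0$; and (b) each individual split at a large branch $e$ replaces the weight on $e$ by the weight of the winning small branch, which is at least half the weight on $e$ (by the switch condition), so a single split multiplies the total weight by a factor at least $1/2$ on that branch's contribution — more precisely, I need to track how the total weight decreases under one split and then iterate over the (at most $p$) large branches split in the course of one full split.

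First I would recall the local picture of a split. If $e$ is a large branch with endpoints $v_1, v_2$, and at $v_i$ the two small half-branches incident to $v_i$ carry weights summing to $\mu(e)$ (switch condition), then splitting at $e$ (say with half-branch $h$ as winner) produces a new train track $\tau'$ whose branch weights agree with those of $\tau$ except that the diagonal branch of the split carries a weight equal to a difference of two of the small-branch weights, and the total weight satisfies $\mu(\tau') = \mu(\tau) - \mu(e) + (\text{weight of the new diagonal})$. Since the new diagonal weight is a difference $|\mu(a) - \mu(b)|$ with $\mu(a) + \mu(b)$ bounded by something $\le \mu(e)$ (precisely, $a,b$ are among the four small half-branches and one pairs off against each other at the two switches), one gets $\mu(\tau') \le \mu(\tau)$, with strict inequality when $\mu(e) > 0$. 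Because $(x_i) \in {\cal U}$ is uniquely ergodic, $\mu$ is positive on every branch of every $\tau_i$ (stated in the text just before the definition of $\rho$), so every split in the full splitting sequence $\tau_0 \to \tau_1$ strictly decreases the total weight: hence $\mu(\tau_1) < 1$ and $\rho(x_i) = -\log\mu(\tau_1) > 0$.

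For the upper bound, I would quantify the decrease at each split. At a split of large branch $e$, the new total weight is $\mu(\tau) - \mu(e) + |\mu(a)-\mu(b)|$ where, using the switch conditions at $v_1$ and $v_2$, one can pair the four relevant half-branch weights so that the surviving diagonal weight is at most $\max$ of two small-branch weights, each of which is at most $\mu(e)$; the cleanest bound is that the diagonal weight is $\le \mu(e)$ always, but that only gives $\mu(\tau') \le \mu(\tau)$. To get the factor $2$, I would argue that after all the splits constituting one full split — one at each large branch, and there are at most $p$ branches total hence at most $p$ large branches — each large branch of $\tau_0$ contributes a multiplicative loss, and the product telescopes to give $\mu(\tau_1) \ge 2^{-p}$, i.e. $\rho(x_i) \le p\log 2$. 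Concretely: order the large branches $e_1,\dots,e_r$ ($r \le p$) of $\tau_0$; after splitting at $e_1,\dots,e_j$ the total weight is at least $2^{-j}$, because passing from step $j$ to step $j+1$ replaces the current weight $w$ by something $\ge w - \mu(e_{j+1}) + \tfrac12\mu(e_{j+1}) = w - \tfrac12\mu(e_{j+1}) \ge \tfrac12 w$ (using $\mu(e_{j+1}) \le w$).

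The main obstacle I anticipate is the bookkeeping in the last step: showing cleanly that the diagonal branch produced by a split carries weight at least $\tfrac12$ of the weight of the large branch it replaces, and that this lower bound persists (or only improves) when later splits in the same full split act on branches sharing a switch with $e_{j+1}$. One has to be careful that a split at $e_{j+1}$ may alter the weights on branches adjacent to later $e_k$'s; however, since all these are still governed by switch conditions and the total weight only decreases, the inequality $\mu(e_k) \le (\text{current total weight})$ used at each stage remains valid, and the telescoping bound $2^{-r} \le 2^{-p}$ survives. I would present this as a short induction on the number of splits performed so far within the fixed full split $\tau_0 \to \tau_1$, invoking the switch condition and positivity of $\mu$ (from unique ergodicity) at each step, and conclude $\mu(\tau_1) \in [2^{-p}, 1)$, hence $\rho({\cal U}) \subset (0, p\log 2]$.
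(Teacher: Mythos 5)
Your global strategy --- iterate a per-split lower bound on the total weight over the at most $p$ single splits that make up a full split --- is exactly the paper's, and your argument for $\rho>0$ (positivity of $\mu$ on every branch, coming from unique ergodicity, forces a strict decrease of total weight at each split) also matches the paper. However, the key quantitative step in your argument is not correct as stated. You claim that the diagonal branch produced by a split at a large branch $e$ carries $\mu$-weight at least $\tfrac12\mu(e)$, so that the new total weight is at least $w-\mu(e)+\tfrac12\mu(e)=w-\tfrac12\mu(e)$. That lower bound on the diagonal weight is false. If $b_1,b_2$ denote the two losing small half-branches at the two endpoints of $e$, the diagonal weight is $\mu(e)-\mu(b_1)-\mu(b_2)$, which is positive but can be arbitrarily small (take $\mu(b_1),\mu(b_2)$ each close to $\tfrac12\mu(e)$). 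You flagged exactly this as your ``main obstacle,'' and the induction you sketch does not remove it.

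The inequality $\mu'(\tau')\ge\tfrac12\mu(\tau)$ is nonetheless true, and the paper gets it in a way that avoids any lower bound on the diagonal weight. The weight \emph{lost} in the split is exactly $\mu(b_1)+\mu(b_2)$: all branches other than $e$ transport with unchanged weight, and the diagonal inherits $\mu(e)-\mu(b_1)-\mu(b_2)$, so $\mu'(\tau')=\mu(\tau)-\mu(b_1)-\mu(b_2)$. Now $\mu(b_1)+\mu(b_2)\le\mu(e)$ (this is precisely the compatibility of the chosen split direction with $\mu$), and $e$ together with the branches carrying $b_1,b_2$ are branches of $\tau$, so $\mu(e)+\mu(b_1)+\mu(b_2)\le\mu(\tau)$. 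Combining the two gives $2(\mu(b_1)+\mu(b_2))\le\mu(\tau)$, hence $\mu'(\tau')\ge\tfrac12\mu(\tau)$. With this replacement, the rest of your argument --- telescoping over at most $p$ splits, comparing at each stage with the \emph{current} total weight so that later splits cause no trouble --- is sound and gives $\mu(\tau_1)\ge 2^{-p}$, i.e.\ $\rho\le p\log 2$.
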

\begin{proof}
Let $(x_i)\in {\cal U}_+$ and 
choose a full numbered splitting
sequence $(\tau_i)\subset {\cal N\cal T}({\cal Q})$ which
realizes $(x_i)$. 
Let $\mu\in {\cal V}^{\cal P}(\tau_0)\cap \bigcap_{i\geq 0}{\cal V}(\tau_i)$.  
Thus $\mu$, viewed as a measured geodesic lamination,
is carried by each of the
train tracks $\tau_i$, and it defines
a transverse measure
$\mu$ on $\tau_0$ of total weight one.

Let $e$ be a large branch of $\tau_0$ and let
$\tau^\prime$ be the large train track which is obtained
from $\tau_0$ by a single split at $e$ and which is
splittable to $\tau_1$ (compare \cite{H09a} for details). Let
$e^\prime$ be the branch in $\tau^\prime$ which is
the \emph{diagonal} of the split of $\tau_0$ at $e$. This
means that $e^\prime$ is the small 
branch in $\tau^\prime$ which is the image of $e$ under the
natural bijection $\Lambda$ of the branches of $\tau_0$ onto
the branches of $\tau^\prime$.
Let $\mu^\prime$ be the
transverse measure on $\tau^\prime$ defined by the
measured geodesic lamination $\mu$. 
There are two  branches $b,d$ in $\tau$ incident on 
the two endpoints of $e$ 
such that
 \begin{equation}\label{linear}
 \mu(e)=
\mu^\prime(e^\prime)+\mu^\prime(\Lambda(b))+\mu^\prime(\Lambda(d)).\end{equation}
Moreover, we have
$\mu(a)=\mu^\prime(\Lambda(a))$ for every branch $a\not=e$  
of $\tau_0$ and hence 
$\mu(\tau_0)=1=\mu^\prime(\tau^\prime)+\mu^\prime(\Lambda(b))+\mu^\prime(\Lambda(d)) \leq 
2\mu^\prime(\tau^\prime)$ and 
\[\mu^\prime(\tau^\prime)\in [1/2,1].\]
Since $\tau_1$ can be obtained from $\tau_0$ by at most $p$ splits,
this immediately
implies that $\rho$ is nonnegative and bounded from
above by $p\log 2$.

On the other hand, by the definition of ${\cal U}_+$, the 
measure $\mu^\prime$ is positive
on every branch of $\tau^\prime$. Then the equation (\ref{linear}) 
shows that $\rho>0$ on ${\cal U}_+$. 
\end{proof}



In the following lemma, we view ${\cal U}_+$ as a subspace of the 
shift space $(\Omega,\sigma)$ with its standard topology, generated by
the clopen cylinder sets $C_\ell(x_i)=\{(y_i)\mid y_i=x_i\text{ for }-\ell\leq i\leq \ell\}$.

\begin{lemma}\label{roofcont}
The function $\rho:{\cal U}_+\to \mathbb{R}$ is 
continuous and only depends on the future.
\end{lemma}
\begin{proof} By construction, if $x_i=y_i$ for all $i\geq 0$,
then we have $\rho(x_i)=\rho(y_i)$, that is,  
 $\rho$ only
depends on the future.

To show continuity of $\rho$ 
let $(x_i)\in {\cal U}_+$. 
By the definition of the topology on the shift
space $(\Omega,\sigma)$ with basis the cylinder sets, 
it suffices to show that for every $\epsilon >0$ 
there is some $j\geq 0$ (depending on $(x_i)$)
such that 
\[\vert \rho(y_i)- \rho(x_i)\vert \leq \epsilon\]
whenever $(y_i)\in {\cal U}_+$ is such that
$x_i=y_i$ for $0\leq i\leq j$.

For this let $(\tau_i)$ be a full numbered splitting
sequence which realizes $(x_i)$.
Then $(\tau_i)$ determines a measured geodesic
lamination 
\[\mu= {\cal V}^{\cal P}(\tau_1)\cap \bigcap_{i\geq 0} {\cal V}(\tau_i). \]
By definition, $\rho(x_i)=\log \mu(\tau_0)$ where 
$\mu$ is viewed as a weight function on $\tau_0$
via a carrying map $\tau_1\to \tau_0$. Note that here we normalize
$\mu$ at $\tau_1$ for ease of exposition.

Let as before $p>0$ be the number of branches of a train
track in ${\cal N\cal T}({\cal Q})$. 
The set ${\cal V}^{\cal P}(\tau_1)$
of all transverse measures on $\tau_1$ 
of total mass one can be identified with 
a compact convex subset of $\mathbb{R}^p$. 
The natural projection 
\[\pi:{\cal V}^{\cal P}(\tau_1)\to 
{\cal P\cal M\cal L}\] is a homeomorphism onto its
image with respect to the weak$^*$-topology on ${\cal P\cal M\cal L}$
\cite{PH92}. Since by equation (\ref{linear}) in 
the proof of Lemma \ref{bounded}, a carrying map $\tau_1\to \tau_0$ induces 
a linear and hence continuous map ${\cal V}(\tau_1)\to {\cal V}(\tau_0)$,
there is an open neighborhood $V\subset {\cal P\cal M\cal L}$
of $\pi(\mu)$ with the following property.
Every
$\nu\in {\cal V}^{\cal P}(\tau_1)$ with $\pi(\nu)\in V$
defines a transverse measure
on $\tau_0$ whose total weight is contained
in the interval
$(e^{\rho(x_i)-\epsilon},e^{\rho(x_i)+\epsilon})$. 

Now for every $j>0$, the set 
$P{\cal V}(\tau_j)$ 
of all projective measured
geodesic laminations which are carried by $\tau_j$ 
is a compact subset of ${\cal P\cal M\cal L}$ containing 
$\pi(\mu)$,
and we have $P{\cal V}(\tau_j)
\subset P{\cal V}(\tau_i)$ for $j\geq i$ 
and $\bigcap_jP{\cal V}(\tau_j)=\pi(\mu)$.
As a consequence, there is some $j_0>0$ such that
$P{\cal V}(\tau_{j_0})\subset V$. 
By the definition of $\rho$,
this implies that 
the value of $\rho$ on the intersection with ${\cal U}_+$ of the 
cylinder $\{(y_i)\mid y_j=x_j$ for
$0\leq j\leq j_0\}$ is contained in the interval
$(\rho(x_i)-\epsilon,\rho(x_i)+\epsilon)$.
This shows the lemma.
\end{proof}

\subsection{Normal sequences are uniquely ergodic}
\label{sec:normal} 

Our next goal is to obtain a better understanding of the set 
${\cal U}_+\subset \Omega$ of uniquely ergodic sequences. 
It follows 
from the definitions that ${\cal U}_+$ is a Borel subset of $\Omega$.

Call a biinfinite sequence $(x_j)\in \Omega$ \emph{normal}
if every finite admissible sequence occurs in $(x_j)$ infinitely often
in forward and backward direction.
The following is the main result of this subsection.

\begin{proposition}\label{normal} A normal sequence
$(x_i)\in \Omega$ is uniquely ergodic.
\end{proposition}

The proof of Proposition \ref{normal} relies on some technical concepts and 
results which will also be important tools in Section \ref{sec:measureofmax}.


We first define a norm-like quantity
which measures the difference between
two projective measured geodesic laminations 
$[\mu],[\nu]$ which are carried by a train track $\tau$ and 
define positive weight functions on $\tau$. Namely,
choose representatives $\mu,\nu\in {\cal V}(\tau)$ of $[\mu],[\nu]$ 
and put
\begin{align}\label{norm} 
(\mu \mid \nu)_{\tau}^0 &=\max\{\mu(b)/\nu(b),\nu(b)/\mu(b)\mid b\text{ is a branch of }\tau\}
\text{ and }\\
([\mu]\mid [\nu])_\tau &=\min\{ (\mu \mid a \nu)_{\tau}^0\mid a>0\}. \notag
\end{align}
Note that $([\mu]\mid [\nu])_\tau$ indeed
only depends on the projective classes of $\mu,\nu$. Moreover,
we have $([\mu]\mid [\nu])_\tau=
([\nu]\mid [\mu])_\tau\geq 1$ for all $[\mu],[\nu]$, 
with equality if and only if $[\mu]=[\nu]$.

The following simple observation compares the behavior of these norms under
splitting operations. 

\begin{lemma}\label{preserve}
Let $(\tau_i)_{0\leq i\leq n}\subset
{\cal N\cal T}({\cal Q})$ be any finite full splitting
sequence, let $a_0>0$ and suppose that $\mu,\nu\in {\cal V}(\tau_n)$  
fulfill $\mu(b)\leq a_0\nu(b)$ 
for every branch $b$ of $\tau_n$.
Then the transverse measures $\mu_0,\nu_0$ on $\tau_0$ 
defined by $\mu,\nu$ via a carrying map $\tau_n\to \tau_0$ satisfy
$\mu_0(e)\leq a_0\nu_0(e)$ for every branch 
$e$ of $\tau_0$. In particular, we have 
\begin{equation*}
([\mu] \mid [\nu])_{\tau_n}\geq ([\mu]\mid [\nu])_{\tau_0}.\end{equation*}
\end{lemma}
\begin{proof} The lemma follows immediately
from the fact that 
the natural map ${\cal V}(\tau_n)\to {\cal V}(\tau_0)$ induced by a carrying map
$\tau_n\to \tau_0$ 
is the restriction of a linear map, explicitly given in (\ref{linear}),
from the finite dimensional vector space of weight functions on 
the branches of $\tau_n$ to the vector space of
weight functions on the branches of $\tau_0$ which preserves
positivity.
\end{proof}


Using the assumptions and notations from
Sections \ref{sec:strata}-\ref{sec:asymbolic} and the above notions, we
can now formulate the main
technical tool towards the proof of Proposition \ref{normal}.

\begin{lemma}\label{fillmore} 
Let $\tau_0\in {\cal N\cal T}({\cal Q})$ and let $\zeta\in {\cal V}(\tau)$ 
be a uniquely ergodic geodesic lamination with support 
in ${\cal L\cal L}({\cal Q})$. 
Let $(\tau_i)\subset {\cal N\cal T}({\cal Q})$
be the full splitting sequence starting at $\tau_0$ with
$\bigcap_i{\cal V}(\tau_i)=(0,\infty)\zeta$. Then 
there is some $n>0$ 
with the following properties.
\begin{enumerate}
\item There exists a number $\beta >0$ such that
$\mu(b)/\mu(b^\prime)\geq \beta$ for every $\mu\in {\cal V}(\tau_0)$ 
which is carried by $\tau_n$ and all branches $b,b^\prime$ of $\tau_0$. 
\item 
There is a number $\delta>0$ with the following 
property.
Let $\mu,\nu\in {\cal V}(\tau_n)$ be positive transverse measures; then 
\[([\mu] \mid [\nu])_{\tau_0}^{-1}
\geq ([\mu] \mid [\nu])_{\tau_n}^{-1}(1-\delta) +\delta.\]
%
\end{enumerate}
\end{lemma}
\begin{proof} Let $\tau_0\in {\cal N\cal T}({\cal Q})$ 
and let $\zeta\in {\cal V}(\tau_0)$ 
be a uniquely ergodic 
measured geodesic lamination whose support is contained
in ${\cal L\cal L}({\cal Q})$. Then the $\zeta$-weight of every branch
of $\tau_0$ is positive.

Let $(\tau_i)$ be the
full (numbered) splitting sequence such that for every $i$, 
the train track $\tau_i$ carries $\zeta$. 
We first claim that there exists a number $\ell>0$ so that 
the image of any branch $z$ of $\tau_{\ell}$ 
under a carrying map $F:\tau_{\ell}\to \tau_0$ 
which maps switches to 
switches and branches to edge paths on $\tau_0$
is all of $\tau_0$.

To show the claim 
we use the fact that every half-leaf of $\zeta$ is dense in $\zeta$
and the unzipping 
procedure for train tracks introduced on p.134 of \cite{PH92}. This construction
can be described as follows. 
Choose an arbitrary switch $v$ of $\tau_{0}$. The two small
branches $a_1,a_2$ incident on $v$ are contained in the boundary of some complementary
region $C$ of $\tau_0$. There is a corresponding complementary region
$C_\zeta$ of $\zeta$, and there are two oriented forward 
asymptotic boundary half-leaves
$\ell_1,\ell_2$ of $C_\zeta$ which correspond to the two sides of the 
complementary component $C$ of $\tau_0$ determined by the two small
half-branches $a_1,a_2$ incident on $v$. 

The two boundary half-leaves $\ell_1,\ell_2$ are mapped by a carrying map 
$\zeta\to \tau_0$ to a train path (an edge path of class $C^1$) on $\tau_0$, given
by a chain of oriented branches $b_1,b_2,\cdots$ where $b_1$ is the branch incident on 
$v$ and large at $v$.
Cut $\tau_0$ successively open along the branches $b_i$ as follows.
Replace first $b_1$ by a graph consisting of two edges 
(two copies of $b_1$) which are attached at a common vertex, which is the switch of $\tau_0$ on which 
the forward endpoint of the oriented edge $b_1$ is incident. 
Connect
the two univalent vertices of this graph, which correspond to the switch $v$, 
to the small branches $a_i$ in such a way that
the resulting graph is a train track $\tau^\prime$ as described on pages 135--137 of \cite{PH92}. 
The train track $\tau^\prime$ is carried by $\tau_0$ and it carries $\zeta$,  
and for $i=1,2$ it contains a branch $a_1^\prime$
which is mapped by a carrying map $\tau^\prime\to \tau_0$ to the union of $a_i$ with $b_1$. 
The train track $\tau^\prime$ may be not generic, but one can nevertheless 
repeat this construction with $\tau^\prime$ and the branch $b_2$ etc. 

Since the two half-leaves $\ell_1,\ell_2$ are mapped
by a carrying map $\zeta\to \tau_0$ onto $\tau_0$, there is a number $\ell>0$ so that 
after cutting $\tau_{0}$ open 
in this way along the branches $b_1,\dots,b_\ell$, the resulting train track $\xi$ 
is carried  by $\tau_{0}$ and has the property that its two branches that correspond to $a_1,a_2$ 
are mapped by a carrying map $\xi\to \tau_0$ onto $\tau_0$. The train track
$\xi$ may not be generic but can be modified with a sequence of shifts to a generic train track.
Repeat this procedure successively with all branches of $\xi$ which are not mapped onto 
$\tau_0$. In finitely many steps, one obtains a train track $\eta$ which is 
carried by $\tau_0$ and so that a carrying map $\eta\to \tau_0$ maps every branch of $\eta$ onto 
$\tau_0$. 

If $\eta^\prime$ is obtained from $\eta$ by a split, then 
$\eta^\prime$ is carried by $\tau_0$ and every branch of $\eta^\prime$ is mapped
onto $\tau_0$ by a carrying map. 
By Corollary 2.4.3 of \cite{PH92}, the train tracks 
$\tau_0,\eta$ can be split to the same train track $\sigma$ which carries
$\zeta$. Uniqueness of splitting sequences of train tracks which carry $\zeta$
up to ordering of the splits, established in Lemma 5.1 of \cite{H09a}, 
yields that $\sigma$ is splittable to $\tau_{\ell}$ for some $\ell>0$. The train track
$\tau_{\ell}$ has the properties stated in the claim.

%
%

By rescaling, assume now that $\zeta\in {\cal V}^{\cal P}(\tau_\ell)$. 
For each $i>\ell$ let
\[A_i={\cal V}^{\cal P}(\tau_{\ell})\cap {\cal V}(\tau_i)\] 
be the set
of all normalized transverse measures on $\tau_{\ell}$
defined by measured geodesic laminations which are 
carried by $\tau_{i}$. Then
$A_{i+1}\subset A_i$ and moreover
$\bigcap_iA_i=\{\zeta\}$. Since the $\zeta$-weight of 
every branch of $\tau_{\ell}$ is positive, there is a number
$\kappa >0$ so that for sufficiently large $i$, 
say for all $i\geq n$, and 
for every $\nu\in A_n$ we have 
\begin{equation}\label{uniformpositive}
\min\{\nu(b)/\nu(b^\prime) \mid b,b^\prime 
\text{ are branches of }\tau_{\ell}\}\geq \kappa.\end{equation}
Together with Lemma \ref{preserve}, 
this yields the first part of the lemma.

Let again $F:\tau_\ell\to \tau_0$ be the carrying map which maps switches to 
switches. 
To show the second part of the lemma let $n\geq \ell$ and 
assume that $\mu,\nu\in {\cal V}(\tau_n)$ are positive and let
\[([\mu]\mid [\nu])_{\tau_{n}}=
\max\{\mu(b)/\nu(b),\nu(b)/\mu(b)\mid b \text{ is a branch of }\tau_{n}\}=a_0>1.\]
By Lemma \ref{preserve}, we have 
$([\mu]\mid [\nu])_{\tau_\ell} =a_1\leq a_0$. 
By the definition of $([\mu]\mid [\nu])_{\tau_{\ell}}$, 
up to  
exchanging $\mu$ and $\nu$, there exists a branch 
$e$ of $\tau_\ell$ with $\nu(e)/\mu(e)=a_1$.

 For a branch $z$ of $\tau_{\ell}$ and a branch 
 $u$ of $\tau_0$ let 
$\beta(z)(u)\geq 1$ be the number of times the trainpath $F(z)$
passes through $u$.
Let 
\[m_1=\max \{\beta(z)(u)\mid z,u\} \text{ and } 
m_0=\min \{\beta(z)(u)\mid z,u\}\geq 1.\]
Denote as before by $p>2$ the number of branches
of $\tau_0$. Then for every branch $z$ of $\tau_{\ell}$ and 
$u$ of $\tau_0$ we have 
\begin{equation}\label{weight5}
\beta(z)(u)\geq \frac{m_0}{ m_1p} \cdot \sum_{s}\beta(s)(u).\end{equation}

By linearity, for any branch $u$ of $\tau_0$ we have
\begin{align*}
\nu(u) &=\sum_{s\not=e} \nu(s)\beta(s)(u)+ \nu(e)\beta(e)(u) 
\geq a_1^{-1} \sum_{s\not=e} \mu(s)\beta(s)(u) +a_1\mu(e)\beta(e)(u) \\
& =
a_1^{-1}\mu(u)+ (a_1-a_1^{-1})\mu(e)\beta(e)(u)
.\end{align*}
Since by the estimate (\ref{uniformpositive}) it holds
$\mu(e)\geq \kappa \max\{\mu(z)\mid z\}$, together with the estimate
(\ref{weight5}) 
we conclude that
\[\mu(e)\beta(e)(u) \geq \frac{m_0\kappa}{m_1p} \mu(u). \]
Now for $\delta=m_0\kappa/m_1p$ we obtain 
\[\nu(u)\geq a_1^{-1}\mu(u)+ \delta (a_1-a_1^{-1})\mu(u) =
(a_1^{-1}+ \delta (a_1-a_1^{-1}))\mu(u).\]
Since $1\leq a_1\leq a_0$, the branch $u$ of $\tau_0$ was arbitrary and 
we can exchange the roles of $\mu,\nu$, 
this yields the desired
estimate. 
\end{proof}

\begin{definition}\label{weaktight}
Call a finite full splitting sequence $(\tau_i)_{0\leq i\leq n}$
\emph{weakly tight} if the train tracks $\tau_n\prec\tau_0$ 
have properties (1) and (2) stated in  
Lemma \ref{fillmore}. 
\end{definition}

It follows from the definitions and 
Lemma \ref{preserve} that if $(\tau_i)_{0\leq i\leq n}$ is weakly 
tight, then the same holds true for the sequence
$(\tau_i)_{0\leq i\leq n+1}$ where $\tau_{n+1}$ is obtained from 
$\tau_n$ by a full split. 

To take full advantage of this concept we isolate a simple lemma which 
will be used several times in the sequel. 

\begin{lemma}\label{recursion}
Let $\beta\in (0,\frac{1}{2})$, let $\delta\in (0,\beta)$ and put 
$\kappa=1-\delta\in (\frac{1}{2},1)$. Define recursively a sequence
$(a_n)$ by $a_0=\beta$ and $a_n=(1-\delta)a_{n-1}+\delta$; then 
$a_n\geq 1-\kappa^n$ for all $n\geq 0$. 
\end{lemma}
\begin{proof}
We proceed by induction on $n$. The case $n=0$ follows from the requirement
that $\delta \leq \beta$ and hence $\beta\geq 1-\kappa$.

By induction, assume that the required inequality holds true for $n-1\geq 0$.
Then
\[a_{n}=(1-\delta)a_{n-1}+\delta\geq \kappa (1-\kappa^{n-1})+(1-\kappa)=1-\kappa^n\]
completing the induction step. 
\end{proof}

\begin{proof}[Proof of Proposition \ref{normal}]
Choose a weakly tight sequence $(y_i)_{0\leq i\leq n}\subset {\cal E}({\cal Q})$ and let $\beta >0$
and $\delta>0$ be as in Definition \ref{weaktight}.  Assume without loss of generality
that $\delta <\beta^2$. 
Let $(x_i)\in \Omega$ be a normal sequence; then there are numbers 
$i_0< i_1<\cdots$ so that the following holds true. 
\begin{enumerate}
    \item $i_{j+1}>i_j+n$ for all $j\geq 0$.
    \item For each $j$ and each $\ell \leq n$, we have $x_{i_j+\ell}=y_\ell$. 
\end{enumerate}
Choose a number $j>0$ and 
let $\mu,\nu\in {\cal V}^{\cal P}(x_{i_j+n})$. 
By the first condition in the definition
of a weakly tight sequence, 
the measures on $\tau_{i_j}$ defined by $\mu,\nu$ are positive, and
we have
$([\mu],[\nu])_{\tau_{i_j}}^{-1}\geq \beta^2$. 

By the second property of a weakly tight sequence, for all $\ell <j$ we have 
\begin{equation}\label{iteration}
([\mu],[\nu])_{\tau_{i_{\ell}}}^{-1}\geq ([\mu],[\nu])^{-1}_{\tau_{i_{\ell +1}}}(1-\delta)+\delta. \end{equation}
Lemma \ref{recursion} now yields that if $\mu,\nu$ are carried by $\tau_{i_{\ell}+n}$ then 
$([\mu] \mid [\mu])_{\tau_{0}}^{-1}\geq 1-\kappa^\ell$. But this implies that 
$([\mu]\mid [\nu])_{\tau_0}=1$ for $[\mu],[\nu]\in \bigcap_{i\geq 0} P{\cal V}(\tau_i)$ and hence
$[\mu]=[\nu]$. As a consequence, $\bigcap_{i\geq 0} P{\cal V}(\tau_i)$ consists of a unique point $[\mu]$. 
The transverse measure defined by $[\mu]$ on any of the train tracks $\tau_i$ is positive
on every branch.

To complete the proof we have to verify that the support of $[\mu]$ is of the same
topological type as $\tau_0$. This follows from another application of 
Corollary 2.4.3 of \cite{PH92}. Namely, otherwise there exists
 a splitting sequence $(\eta_j)$ starting at $\eta_0=\tau_0$ 
 (here the transition from $\eta_j$ to $\eta_{j+1}$ is a split at a single large 
 branch)
 which consists of 
 train tracks carrying $[\mu]$ and such that this splitting sequence 
 contains a collision, that is, a split followed by the diagonal of the split. 
By uniqueness of splitting sequences
starting from $\tau_0$ which consist of train tracks carrying $[\mu]$
as established in Lemma 5.1 of \cite{H09a}, this violates the fact that 
the measure $[\mu]$ is positive on every branch of any of the train tracks 
$\tau_i$. Together this shows that indeed, we have
$(x_i)\in {\cal U}_+$ as claimed. 
\end{proof}


As in Section 2, for  
$\tau\in {\cal N\cal T}({\cal Q})$ let
${\cal V}^*(\tau)$ be the set of all measured
geodesic laminations carried by $\tau^*$ and  
denote by
$P{\cal V}^*(\tau)$ the projectivization of 
${\cal V}^*(\tau)$.
If $\tau^\prime\in {\cal N\cal T}({\cal Q})$ 
is obtained from
$\tau\in {\cal N\cal T}({\cal Q})$ 
by a single split at a large branch $e$
and if $C$ is the matrix which describes the
transformation ${\cal V}(\tau^\prime)\to
{\cal V}(\tau)$ then the dual 
transformation
${\cal V}^*(\tau)\to {\cal V}^*(\tau^\prime)$ is 
given by the transposed matrix $C^t$
(Section 3.4 of \cite{PH92}). Thus Proposition \ref{normal} also 
applies to time reversal and shows the following corollary.

\begin{corollary}\label{fill} Let $(x_i)\in \Omega$ be
normal and let $(\tau_i)\subset {\cal N\cal T}({\cal Q})$
be a full numbered splitting sequence
which realizes $(x_i)$;
then $\bigcap_{i<0}P{\cal V}^*(\tau_i)$
consists of a single uniquely ergodic
projective measured geodesic lamination whose support is
contained in ${\cal L\cal L}({\cal Q})$.
\end{corollary}

We call the sequence $(x_i)\in \Omega$
\emph{doubly uniquely ergodic} if $(x_i)$ is uniquely ergodic
as defined above and if moreover 
for one (and hence every) full numbered splitting
sequence $(\tau_i)\in {\cal N\cal T}({\cal Q})$ 
which realizes $(x_i)$,
the intersection
$\bigcap_{i<0}{\cal P\cal V}^*(\tau_i)$ consists of a unique 
projective measured geodesic lamination  
whose support is contained in ${\cal L\cal L}({\cal Q})$.
By Lemma \ref{normal} and Lemma \ref{fill}, 
every normal sequence is doubly uniquely ergodic and hence the 
Borel set 
\begin{equation}\label{doubleunique}
{\cal U}\subset\Omega\end{equation}
of all doubly uniquely ergodic 
sequences $(x_i)\in \Omega$ is dense.

\subsection{Mapping doubly uniquely ergodic sequences into ${\cal Q}$}
\label{sec:semiconj}

Let $(x_i)\in {\cal U}$ and let $(\tau_i)$ be a 
full numbered splitting sequence which realizes $(x_i)$. 
Then $(\tau_i)$ determines a pair
$(\mu,\nu)$ of measured geodesic laminations 
by the requirement that 
$\mu\in {\cal V}^{\cal P}(\tau_0)\cap
\bigcap_{i\geq 0} {\cal V}(\tau_i)$,
that $\nu\in \bigcap_{i\leq 0}{\cal V}^*(\tau_i)$ and 
that $\iota(\mu,\nu)=1$.  Since the supports of both $\mu,\nu$
are contained in ${\cal L\cal L}({\cal Q})$,  
by equivariance under the
action of the mapping class group, this implies that 
every sequence $(x_i)\in {\cal U}$ determines a 
quadratic differential 
\begin{equation}\label{xi}
\Xi(x_i)\in {\cal Q}\subset {\cal Q}(m_1,\dots,m_\ell;-m).
\end{equation} 
More specifically, this quadratic differential is 
contained in the subset 
\begin{equation}
{\cal U\cal Q}\subset {\cal Q}\end{equation}
of all area one
quadratic differentials in ${\cal Q}$ 
whose vertical and horizontal measured
geodesic laminations
are uniquely ergodic, with support in 
${\cal L\cal L}({\cal Q})$. 
Note that ${\cal U\cal Q}$ is a $\Phi^t$-invariant Borel subset of 
${\cal Q}$. Its preimage ${\cal U}\tilde {\cal Q}$ 
in $\tilde {\cal Q}(S)$ (or in $\tilde {\cal H}(S)$)
is a $\Phi^t$-invariant Borel subset of $\tilde {\cal Q}$.

Recall that the roof function $\rho$ is defined on 
the dense shift invariant Borel set ${\cal U}\subset \Omega$, and by 
Lemma \ref{roofcont}, it is 
continuous, positive and bounded from above
by $p\log 2$.
The \emph{suspension}
for the shift $\sigma$ on the invariant subspace
${\cal U}$ with
roof function $\rho$ is the
space
\[X=\{(x_i)\times [0,\rho(x_i)]\mid (x_i)\in {\cal U}\}/\sim\] 
where the equivalence relation $\sim$ identifies
the point $((x_i),\rho(x_i))$ with the point
$(\sigma(x_i),0)$. Note that $\sim$ is a closed equivalence
relation on ${\cal U}$ since the function $\rho$ is continuous.
There is a natural flow $\Theta^t$ on $X$
defined by $\Theta^t(x,s)=(\sigma^jx,\tilde s)$ (for $t\geq 0$)
where $j\geq 0$ is such that
$0\leq \tilde s=t+s -\sum_{i=0}^{j-1}\rho(\sigma^ix)< \rho(\sigma^{j}x)$.

A \emph{semi-conjugacy} of $(X,\Theta^t)$
into a flow space $(Y,\Phi^s)$ is a surjective continuous
map $\Psi:X\to Y$ such that $\Phi^t \Psi(x)=\Psi(\Theta^tx)$ for
all $x\in X$ and all $t\in \mathbb{R}$. 
We call a semi-conjugacy $\Psi$ \emph{finite-to-one}
if the number of preimages of any point is finite.

By construction, there is a natural extension
of the map $\Xi$ defined in equation (\ref{xi})
to the suspension flow $(X,\Theta^t)$, again denoted by $\Xi$,
which commutes with the flows $\Theta^t$ and $\Phi^t$. 
We call such a map a \emph{partial semi-conjugacy}. Its 
image is contained in the $\Phi^t$-invariant 
Borel subset ${\cal U\cal Q}$.

The goal of this subsection is to establish the following result.

\begin{proposition}\label{semi} 
The map 
$\Xi:(X,\Theta^t)\to ({\cal U\cal Q},\Phi^t)$ is 
a finite-to-one semi-conjugacy.  
\end{proposition}

In particular, the image of the map $\Xi$ equals 
the $\Phi^t$-invariant subset ${\cal U\cal Q}\subset {\cal Q}$.

The proof of this proposition uses a technical result 
which constructs the preimage of a point $q\in {\cal U\cal Q}$ under the map 
$\Xi$ and shows that it is finite. 
Before we can establish this technical tool
we have to invoke some results from \cite{H09b}.

The \emph{curve graph}
${\cal C}(S)$ of $S$ 
is the metric graph whose vertices are the essential
simple closed curves on $S$ and where two such vertices are
connected by an edge of length one if and only if
they can be realized disjointly. 
The mapping class
group acts on the curve graph as a group of simplicial isometries.
The curve graph is 
a hyperbolic geometric metric graph \cite{MM99} which can be used for 
navigation in train tracks and in Teichm\"uller space in the following way. 

Any area one quadratic differential $\tilde q\in \tilde {\cal Q}$ 
defines a singular flat metric on $S$ of area one. The injectivity radius 
of this metric is bounded from above by a universal constant.
Thus there exists a number $\chi_0>0$ (depending on $S$) so that any such 
metric admits a simple closed geodesic of $\tilde q$-length 
(that is, length with respect to the flat metric defined by $\tilde q$) at most $\chi_0$.

Fix a number $\chi>\chi_0$ and define a map $\Upsilon_{\tilde {\cal Q}}:
\tilde {\cal Q}\to {\cal C}(S)$ by associating to a differential 
$\tilde q$ a simple closed curve of $\tilde q$-length at most $\chi$.
The following lemma 
is well known and can explicitly be extracted from 
\cite{R14}. It implies that the map $\Upsilon_{\tilde {\cal Q}}$ 
coarsely does not depend on choices.

\begin{lemma}\label{rafi}
For every $\chi>\chi_0$ there exists a number $R(\chi)>0$ so that for any 
$\tilde q\in \tilde {\cal Q}$, 
the distance in 
${\cal C}(S)$ between any two simple closed curves of 
$\tilde q$-length at most $\chi$ does not exceed $R(\chi)$.
\end{lemma}
\begin{proof}
As the distance in the curve graph between two simple closed curves
$a,b$ on $S$
is bounded from above by the geometric intersection number $\iota(a,b)+1$ 
between $a,b$ \cite{MM99}, 
it suffices
to show the following. For any $\chi>0$ there exists a number $E(\chi)>0$, and for 
any 
singular flat metric on $S$ defined by an area one quadratic differential $\tilde q$,
there exists a simple closed curve $c$ on $S$ so that $\iota(a,c)\leq E(\chi)$ 
for every simple closed curve $a$ of $\tilde q$-length at most $\chi$.

Theorem 3.1 of \cite{R14} shows that for any $\tilde q$ there exists a subsurface
$Y$ of $S$ bounded by simple closed curves 
of uniformly bounded \emph{extremal length} so that the $\tilde q$-length of any essential arc 
in $Y$ with endpoints on $\partial Y$ or any simple closed curve in $Y$ 
is bounded from below by a constant not
depending on $\tilde q$. Thus the number of intersections with $\partial Y$ 
of any simple closed curve
of $\tilde q$-length at most $\chi$ is bounded from above by a 
constant only depending on $\chi$. This shows the lemma.
\end{proof}

\begin{remark}\label{flathyp}
The proof of Lemma \ref{rafi} shows more specifically that for any 
area one quadratic differential $\tilde q$, 
the distance in the curve graph of any simple closed curve on 
$S$ of flat length at most $\chi$ to 
a curve on $S$ of small extremal length 
for the conformal structure on $S$ defined by $\tilde q$ is 
uniformly bounded. Additionally, 
curves of small extremal length are of small length for the
hyperbolic metric defining the same conformal structure (see \cite{R14} for a discussion).
\end{remark}

Let ${\cal T}(S)$ be the Teichm\"uller space
of all complete finite volume hyperbolic metrics on $S$, equipped with 
the \emph{Teichm\"uller metric} $d_{\cal T}$. 
The mapping class group
${\rm Mod}(S)$ acts properly discontinuously and isometrically 
on $({\cal T}(S),d_{\cal T})$.
By the construction on p.251 of \cite{H09b}, there exists a 
coarsely well defined map 
\begin{equation}\label{Lambda}
\Lambda:{\cal N\cal T}({\cal Q})\to {\cal T}(S)\end{equation}
which is coarsely equivariant under the action of the mapping class group
${\rm Mod}(S)$. This means that the map $\Lambda$ depends on choices, but
there exists a number $q>0$ so that if $\Lambda^\prime$ is any other choice, then
for all $\tau\in {\cal N\cal T}({\cal Q})$ and 
all $g\in {\rm Mod}(S)$, it holds
$d_{\cal T}(\Lambda(\tau),\Lambda^\prime(\tau))\leq q$ and
$d_{\cal T}(\Lambda(g\tau),g(\Lambda(\tau))\leq q$.

A \emph{vertex cycle} for a
large train track $\tau\in {\cal N\cal T}({\cal Q})$ 
is a simple closed curve carried by $\tau$ whose counting
measure defines an extreme point for the space of all
transverse measures on $\tau$. The distance in 
${\cal C}(S)$ between any two vertex cycles of a train track on $S$ 
is bounded from above by a universal constant.

Recall the canonical projection $P:\tilde {\cal Q}(S)\to {\cal T}(S)$. The
following statement is Lemma 4.3 of \cite{H09b} which will be used 
as an essential tool for the proof of Proposition \ref{semi}. For its formulation
and later use, for $\tau\in {\cal N\cal T}({\cal Q})$ define
\begin{equation}\label{qp}
{\cal Q}^{\cal P}(\tau)
\end{equation}
to be the set of all differentials $\tilde q\in {\cal Q}(\tau)$ with the property
that the vertical measured geodesic lamination of $\tilde q$ is contained in 
${\cal V}^{\cal P}(\tau)$, that is, it deposits the total mass one on $\tau$.

\begin{lemma}[Lemma 4.3 of \cite{H09b}]\label{navigation}
There is a number $\ell >0$, and for every $\epsilon >0$ there is a number $m(\epsilon)>0$
with the following property. Let $\sigma,\tau\in {\cal N\cal T}({\cal Q})$ and assume that
$\sigma$ is carried by $\tau$ and that the distance in ${\cal C}(S)$ between a vertex
cycle of $\sigma$ and a vertex cycle of $\tau$ is at least $\ell$. 
Let $\tilde q\in Q^{\cal P}(\tau)$
be such that the vertical measured geodesic lamination $\tilde q^v$ of $\tilde q$
is carried by $\sigma$. If the total weight of the transverse measure on $\sigma$ defined by
$\tilde q$ is not smaller than $\epsilon$, then $d_{\cal T}(\Lambda(\tau),P\tilde q)\leq m(\epsilon)$. 
\end{lemma}

We are now ready to prove the main technical result of this subsection.
Recall from Lemma \ref{bounded} that the roof function $\rho$ is bounded from 
above by $p\log 2$.

\begin{lemma}\label{finite}
For $\tilde q\in {\cal U} \tilde{\cal Q}$
there is a neighborhood $V$ of $\tilde q$ in 
$\tilde{\cal Q}$, and there are finitely many
train tracks $\tau_1,\dots,\tau_n\in {\cal N\cal T}({\cal Q})$
(where $n\geq 1$ depends on $\tilde q$)
with the following property. If $\eta\in 
{\cal N\cal T}({\cal Q})$ is such that
$\Phi^t\tilde z\in {\cal Q}^{\cal P}(\eta)$ for some $\tilde z\in V$ and some 
$t\in [0,p\log 2]$ then $\eta\in \{\tau_1,\dots,\tau_n\}$.
\end{lemma}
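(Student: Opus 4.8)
The plan is to run a compactness argument based on the normalization of the flow and on the proper discontinuity of the ${\rm Mod}(S)$-action on $\tilde{\cal Q}(S)$ (resp. $\tilde{\cal H}(S)$). First I would unwind the hypothesis. By definition of ${\cal Q}(\eta)$, the condition $\Phi^tz\in{\cal Q}(\eta)$ says that the vertical measured geodesic lamination of $\Phi^tz$ lies in ${\cal V}_0(\eta)$ and its horizontal lamination is carried by $\eta^*$; since $\Phi^t$ scales the vertical and horizontal laminations by reciprocal exponential factors, this is equivalent to saying that the vertical lamination $z^v$ of $z$ is carried by $\eta$ with total weight $z^v(\eta)$ pinned down by $t$, and that the horizontal lamination $z^h$ of $z$ is carried by $\eta^*$. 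As $t$ ranges over the compact interval $[0,p\log 2]$ the weight $z^v(\eta)$ stays in a fixed compact subinterval $I\subset(0,\infty)$. So it suffices to bound the set $E(V)$ of all $\eta\in{\cal L\cal T}({\cal Q})$ for which there is some $z\in V$ with $z^v$ carried by $\eta$, $z^v(\eta)\in I$, and $z^h$ carried by $\eta^*$.

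Next I would set up a reference frame. Since $q\in{\cal U}\tilde{\cal Q}$, its vertical and horizontal laminations are strongly uniquely ergodic with support in ${\cal L\cal L}({\cal Q})$; in particular $q$ has no vertical or horizontal saddle connection, so by Lemma \ref{numberedtype}(3) there are $\tau_0\in{\cal L\cal T}({\cal Q})$ with $[\tau_0]\in{\cal E}({\cal Q})$ and $s_0\in\mathbb{R}$ with $\Phi^{s_0}q$ an interior point of ${\cal Q}(\tau_0)$. By continuity of the flow and of the carrying relations there is a precompact neighborhood $V$ of $q$ in $\tilde{\cal Q}$ such that, for all $z\in V$: $\Phi^{s_0}z$ is an interior point of ${\cal Q}(\tau_0)$ (so $z^v$ is positive on every branch of $\tau_0$ and $z^h$ is positive on every branch of $\tau_0^*$), the set $\{\Phi^tz\mid z\in V,\ t\in[0,p\log 2]\}$ lies in a fixed compact subset $K'$ of $\tilde{\cal Q}(S)$, and $z^v$ (resp. $z^h$) stays in a fixed small neighborhood of $q^v$ (resp. $q^h$) in ${\cal M\cal L}$.

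Now I would pass to combinatorial types. The set ${\cal L\cal T}({\cal Q})$ is a finite union of ${\rm Mod}(S)$-orbits, so it is enough to bound, for each orbit ${\rm Mod}(S)\cdot\eta_0$, the set of $\eta=g\eta_0$ lying in $E(V)$. If this set is infinite, choose distinct $g_n\in{\rm Mod}(S)$ and witnesses $z_n\in V$, $t_n\in[0,p\log 2]$ with $\Phi^{t_n}z_n\in{\cal Q}(g_n\eta_0)$, i.e. $g_n^{-1}\Phi^{t_n}z_n\in{\cal Q}(\eta_0)$. The set ${\cal Q}(\eta_0)$ is closed in $\tilde{\cal Q}(S)$ (carrying by $\eta_0^*$ is a closed condition and ${\cal V}_0(\eta_0)$ is compact), and its points have vertical lamination in the compact set ${\cal V}_0(\eta_0)$. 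I will argue (see below) that in fact the sequence $g_n^{-1}\Phi^{t_n}z_n$ stays in a fixed compact subset $L\subset{\cal Q}(\eta_0)$. Granting this, $\Phi^{t_n}z_n\in K'$ and $g_n^{-1}\Phi^{t_n}z_n\in L$ with $K',L$ fixed and compact forces, by proper discontinuity of the ${\rm Mod}(S)$-action on $\tilde{\cal Q}(S)$, the $g_n$ to lie in a finite set, contradicting that they are distinct. This completes the proof once the compactness claim is established.

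The main obstacle is exactly that compactness claim. Since the vertical laminations of $g_n^{-1}\Phi^{t_n}z_n$ already lie in the compact polytope ${\cal V}_0(\eta_0)$, one only has to rule out the horizontal laminations — which are carried by $\eta_0^*$ and have intersection number one with the vertical ones — escaping to infinity; by the pairing $(\mu,\nu)\mapsto\sum_b\mu(b)\nu(b)$ of (\ref{intersectionpairing}) this can happen only if the vertical laminations degenerate toward the boundary of ${\cal V}_0(\eta_0)$, i.e. toward a branch of $\eta_0$ carrying zero weight. I would rule this out using that each $(\Phi^{t_n}z_n)^v$ is uniformly close to the $\Phi$-orbit of the strongly uniquely ergodic, filling lamination $q^v$: a subsequential limit of $g_n^{-1}(\Phi^{t_n}z_n)^v$ hitting $\partial{\cal V}_0(\eta_0)$ would be carried by a proper subtrack of $\eta_0$, and chasing this back through the $g_n$ would produce ${\rm Mod}(S)$-translates of measured laminations carried by a train track of strictly smaller topological type that approximate $q^v$, contradicting that $q^v$ fills up $S$ and carries a unique transverse measure — this is the only place the hypothesis $q\in{\cal U}\tilde{\cal Q}$ is used. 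Alternatively, one can obtain the required compact subset $L$ directly from a tightness argument in the spirit of Lemma \ref{tightcontrol} together with the construction of ${\cal E}({\cal Q})$ in Lemma \ref{numberedtype}. The remaining ingredients — closedness and compactness of the various lamination polytopes, positivity of $z^v$ on $\tau_0$, finiteness of ${\cal L\cal T}({\cal Q})/{\rm Mod}(S)$, and proper discontinuity — are routine.
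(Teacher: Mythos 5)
Your overall framework---reduce to finitely many ${\rm Mod}(S)$-orbits in ${\cal L\cal T}({\cal Q})$, then for a fixed orbit ${\rm Mod}(S)\cdot\eta_0$ show that the differentials $g_n^{-1}\Phi^{t_n}z_n$ lie in a compact $L\subset{\cal Q}(\eta_0)$ and invoke proper discontinuity on $g_nL\cap K'\neq\emptyset$---is sound in outline, and you correctly identify the compactness of $L$ as the crux. But the argument you sketch for that compactness claim has a real gap. Writing $\mu_n$ for the vertical lamination of $g_n^{-1}\Phi^{t_n}z_n$ (so $\mu_n\in{\cal V}_0(\eta_0)$) and $\zeta_n=g_n\mu_n$, one has $\zeta_n\to cq^v$ in ${\cal M\cal L}$; you want a subsequential limit $\mu_\infty\in\partial{\cal V}_0(\eta_0)$ to contradict $q^v$ being strongly uniquely ergodic and filling. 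The problem is that only the limit $\mu_\infty$ is carried by a proper subtrack $\sigma<\eta_0$; each $\mu_n$ is still carried by all of $\eta_0$, hence each $\zeta_n$ is carried by $g_n\eta_0$, which has the same full topological type as ${\cal Q}$. So ``chasing back through the $g_n$'' does \emph{not} produce approximations to $q^v$ carried by train tracks of strictly smaller topological type. What degenerates is the ratio of the smallest to largest branch weight of $\zeta_n$ on $g_n\eta_0$, and since the branch $g_nb$ moves around $S$ as $n$ grows, the positivity of $q^v$ on the branches of any \emph{fixed} train track does not preclude $\zeta_n(g_nb)\to 0$. Your fallback via Lemma~\ref{tightcontrol} also lacks the tight-sequence hypothesis that lemma needs, and nothing in the statement forces $\eta$ to lie in ${\cal E}({\cal Q})$.

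The paper's proof circumvents this by working coarsely in Teichm\"uller space rather than trying to trap the differentials $g_n^{-1}\Phi^{t_n}z_n$ in a compact set of ${\cal Q}(\eta_0)$. It builds a coarsely ${\rm Mod}(S)$-equivariant map $\Lambda:{\cal L\cal T}({\cal Q})\to{\cal T}(S)$ (via short markings) landing in the $\epsilon$-thick part, so that proper discontinuity immediately gives: for every $x\in{\cal T}(S)$ and $R>0$ there are only finitely many $\eta$ with $d(\Lambda(\eta),x)\leq R$. The problem is then reduced to showing $d(\Lambda(\eta),Pq)\leq R$ whenever $\Phi^tz\in{\cal Q}(\eta)$ with $z\in V$, $t\in[0,p\log 2]$, and this is done with the curve graph: the map $\Upsilon_{\tilde{\cal Q}}$ sends Teichm\"uller geodesics to unparametrized quasi-geodesics, $d(\Upsilon(\Phi^tq),\Upsilon(q))\to\infty$, vertex cycles of $\eta$ lie within bounded distance of $\Upsilon(\Phi^tz)$, and Lemma 4.2 of \cite{H09b} converts a curve-graph separation of vertex cycles into a bound on $d(\Lambda(\eta),Pq)$. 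This machinery absorbs the drift of the $g_n$ automatically, which is exactly the point your direct approach struggles with. To salvage your route you would essentially need to reprove a quantitative ``thickness'' statement for the pair of laminations $(\mu_n,\nu_n)$ supporting $g_n^{-1}\Phi^{t_n}z_n$, which is the substance of the curve-graph lemma the paper cites.
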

\begin{proof} 
By coarse equivariance under the action of the mapping class group
and proper discontinuity of this action on ${\cal T}(S)$, 
for every $R>0$ the number of all elements 
$\eta\in {\cal N\cal T}({\cal Q})$ so that $d_{\cal T}(\Lambda(\eta),P\tilde q)\leq R$
is finite where $\Lambda:{\cal N\cal T}({\cal Q})\to
{\cal T}(S)$ is as in equation (\ref{Lambda}).
Thus 
for the proof of the lemma
it suffices to show that for $\tilde q\in {\cal U}\tilde  {\cal Q}$
there is a neighborhood $V$
of $\tilde q$ in $\tilde {\cal Q}$ and a number $R>0$ with the following
property. If $\tilde z\in V$, if $t\in [0,p\log 2]$ and 
$\eta\in {\cal N\cal T}({\cal Q})$ are such that $\Phi^t\tilde z\in {\cal Q}^{\cal P}(\eta)$
then $d_{\cal T}(\Lambda(\eta),P\tilde q)\leq R$. 

Let $\tilde q\in \tilde {\cal Q}$ be a differential with the property that the
vertical measured geodesic lamination $\tilde q^v$ of $\tilde q$ is uniquely ergodic, with
support in ${\cal L\cal L}({\cal Q})$. 

By \cite{MM99} and Remark \ref{flathyp}, 
there exists a number $d>0$ not depending on 
$\gamma$ so that the
map $t\to \Upsilon_{\tilde {\cal Q}}(\Phi^t\tilde q)$ is an \emph{unparameterized $d$-quasi-geodesic} 
in ${\cal C}(S)$ (see Theorem 2.3 of 
\cite{H10b} for an explicit statement). 
This means that there exists a (perhaps finite) interval $(a,b)\subset \mathbb{R}$ and an 
increasing homeomorphism $\psi:(a,b)\to \mathbb{R}$ so that the map 
$t\to \Upsilon_{\tilde {\cal Q}}(\Phi^t\tilde q)$ is a $d$-quasi-geodesic in ${\cal C}(S)$.
Here we subsume additive and multiplicative control constants in the single number $d$.

As the measured lamination $\tilde q^v$ is uniquely ergodic and fills $S$, 
it follows from Theorem 1.1 of \cite{H06a} that the unparameterized quasi-geodesic 
$\Upsilon_{\tilde {\cal Q}}(\Phi^t\tilde q)$ $(t\in [0,\infty))$ 
is of 
infinite length (and the support of $\tilde q^v$ is its boundary point in the boundary of the curve
graph, see also the argument on p.124 of \cite{MM99}). 
Since uniform
quasi-geodesics in a hyperbolic space do not backtrack (see Lemma 2.4 
of \cite{H10b} for a precise account in the situation at hand), since the
Teichm\"uller flow is continuous, and since nearby differentials define uniformly bi-Lipschitz
equivalent flat metrics, 
there is a neighborhood $V$ of $\tilde q$ in
$\tilde {\cal Q}$, and there 
is a number $T>0$ such that
\[d(\Upsilon_{\tilde {\cal Q}}(\Phi^t \tilde z),
\Upsilon_{\tilde {\cal Q}}(P\Phi^s \tilde z))
\geq \ell +2b+2p\log 2\] for all $t\geq T$, all $s\in [0,p\log 2]$
and for all $\tilde z\in V$ where $\ell >0$ is as in Lemma \ref{navigation}, 
$b>0$ is a constant which will be determined below and where 
as before, $p\log 2$ is an upper bound for the roof function $\rho$. 

To determine the constant $b$ consider for the moment 
an arbitrary train track 
$\eta\in {\cal N\cal T}({\cal Q})$
and a differential $\zeta\in {\cal Q}^{\cal P}(\eta)$. 
Let $\zeta^v$ be the vertical measured lamination of $\zeta$. 
Vertex cycles of $\eta$
are precisely the extreme points for 
${\cal V}(\eta)$ and are furthermore represented by an embedded
trainpath which passes through every branch at most twice (Lemma 2.2 of \cite{H06a}).
In particular, their number is bounded from above by a universal constant.
Moreover, there exists a number $\kappa >0$ not depending $\eta$ or $\eta$,
and there is a vertex cycle $\chi\subset \eta$ so that 
$\zeta^v(b)>\kappa$ for every branch $b\in \chi$.

Now the $\zeta$-length of a closed curve $\alpha$ on $S$ is bounded from above by
$2(\iota(\zeta^h,\alpha)+\iota (\zeta^v,\alpha))$ where as before, $\iota$ stands
for intersection number. To estimate these quantities note that 
it follows from the continuity of the 
intersection form, the fact that transverse measures on $\eta$ supported 
on weighted geodesic multicurves are dense and Corollary 2.3 of \cite{H06a} that
$\iota(\zeta^v,\chi)\leq 2$.
Moreover, by p.197 of \cite{PH92}, the horizontal 
measured foliation $\zeta^h$ defines a \emph{tangential measure}
on $\eta$ (up to some equivalence relation) so that 
$1=\iota(\zeta^v,\zeta^h)=\sum_b \zeta^v(b)\zeta^v(b)$. As $\zeta^v(b)\geq \kappa$ for 
all $b\in \chi$, it follows that $\iota(\zeta^v,\chi)\leq \frac{1}{\kappa}$. 
Together we conclude that 
the $\zeta$-length of $\chi$ is bounded from above by a universal constant $\chi >0$. 
Thus by Lemma \ref{rafi}, the distance in the curve graph between the vertex cycle
$\chi$ and $\Upsilon_{\tilde {\cal Q}}(\tilde q)$ is bounded from above
by a unviersal constant. 
We let $b>0$ be such an upper bound.

Let $\tilde z\in V$, let $s\in [0,p\log 2]$ and let
$\eta\in {\cal N\cal T}({\cal Q})$ be such that $\Phi^s\tilde z\in {\cal Q}^{\cal P}(\eta)$.
We claim that $d_{\cal T}(\tilde z, \Lambda(\eta))\leq m(\epsilon)$ where
$\epsilon >e^{-T- p\log 2}$ and $m(\epsilon)>0$ is as 
in Lemma \ref{navigation}. This claim completes
the proof of the lemma. 

To show the claim 
modify $\eta$ with a full splitting sequence to a train track 
$\sigma\in {\cal N\cal T}({\cal Q})$ so that $\sigma$ carries the vertical
measured geodesic lamination $\tilde z^v$ of $\tilde z$ and such that, more 
precisely, we have 
$\Phi^{t}\tilde z\in {\cal Q}^{\cal P}(\sigma)$ for some 
$t\in [T,T+p\log 2]$. Such a train track $\sigma$ exists by Lemma \ref{bounded}.  
By the choice of the constants $b>0, T>0$,  
the distance in ${\cal C}(S)$ between a vertex cycle
of $\eta$ and a vertex cycle of $\sigma$ is at least
$\ell$. Thus 
$\sigma,\eta$ satisfy the hypothesis in Lemma \ref{navigation}
with a number $\epsilon\geq e^{-T-p\log 2}$. This implies that
$d_{\cal T}(P\tilde z,\Lambda(\eta))\leq m(\epsilon)$ which completes
the proof of  the lemma.
\end{proof}

\begin{proof}[Proof of Proposition \ref{semi}] 
For every $\tau\in {\cal N\cal T}({\cal Q})$
and every $\tilde q\in {\cal Q}(\tau)$ whose vertical measured
geodesic lamination $\tilde q^v$ 
has support $\nu\in {\cal L\cal L}({\cal Q})$,
there is a \emph{unique} full numbered splitting sequence 
$(\tau_i)_{i>0}$ issuing from $\tau_0=\tau$ 
which consists of train tracks carrying $\tilde q^v$ 
(see the paragraph before Section 3 of \cite{H09a} for uniqueness). 
As by Lemma \ref{finite}, there are only finitely
many $\eta\in {\cal N\cal T}({\cal Q})$ which can arise as the 
starting point for such a splitting sequence and controlled weight, 
it follows that for every $x\in X$ 
the cardinality of 
$\Xi^{-1}(\Xi(x))$ is finite. By construction, the map $\Xi$ commutes with 
the suspension flow $\Theta^t$ defined by the roof function $\rho$ and 
the Teichm\"uller flow on ${\cal Q}$ and hence it is 
a semi-conjugacy of $X$ onto a $\Phi^t$-invariant subset of ${\cal U\cal Q}$ provided
it it is continuous.

Continuity of the map $\Xi$ follows from the arguments used in the proof of Lemma
\ref{roofcont}. Namely, as the roof function $\rho$ is continuous, the flow
$\Theta^t$ is continuous. 
Since $\Xi$ conjugates the flow $\Theta^t$ to the flow 
$\Phi^t$ and both flows are continuous,
if now suffices to show the following. Let $(x_i)\in {\cal U}$ and 
let $s\in (0,\rho(x_i))$; then for every neighborhood 
$V\subset \tilde {\cal Q}$  of $\tilde q=\Xi((x_i),s)$, there exists
a number $\epsilon >0$ and a cylinder set 
$C_i=\{(y_i)\mid y_i=x_i\text{ for }-m\leq i\leq m\}$ so that 
$\Xi(C\cap ({\cal D\cal U},(t-\epsilon,t+\epsilon))\subset V$.

To this end recall from the proof of Lemma \ref{roofcont} that 
$\bigcap_i \Xi(C_i)=\tilde q$. 
Since the cylinder sets $C_i$ are open and closed, 
for a compact neighborhood $K$ of $\tilde q$ 
contained in the interior of $V$ there exists some 
$i$ so that $\Xi(C_i,s)\subset K$. Choose $\epsilon >0$ so that
$\cup_{-\epsilon <t<\epsilon}\Phi^tK\subset V$ and note that 
by equivariance, we have
$\Xi(C_i,(s-\epsilon,s+\epsilon))\subset V$. Since a cylinder set
in $\Omega$ is open and closed, this completes the proof of continuity.

We are left with showing that 
$\Xi(X)$ is all of ${\cal U\cal Q}$.
For this
let $q\in {\cal U\cal Q}$
and let $\tilde q$ be a lift of $q$ to $\tilde {\cal Q}$.
By Lemma \ref{numberedtype2}, 
there is some $\tau\in {\cal N\cal T}({\cal Q})$ which is contained in 
${\cal E}({\cal Q})$ and such that
$\tilde q\in {\cal Q}(\tau)$. 
If $\tilde q^v,\tilde q^h$ are the 
vertical and horizontal measured geodesic laminations
of $\tilde q$, respectively, 
then there is a 
biinfinite full numbered splitting 
sequence $(\tau_i)\subset {\cal N\cal T}({\cal Q})$ 
issuing from $\tau$ such that 
the intersection $\bigcap_{i>0}P{\cal V}(\tau_i)$
consists of a unique point which is 
just the class of $\tilde q^v$, and that the
intersection $\bigcap_{i<0}P{\cal V}^*(\tau)$ 
consists of a unique point which is
the class of $\tilde q^h$. Since the suspension of the 
orbit of a point
in ${\cal U}$ under the shift is mapped to
a biinfinite flow line of the Teichm\"uller flow, 
this implies that $q\in \Xi(X)$. 
\end{proof}

\begin{remark}
The Teichm\"uller flow is not hyperbolic,
and the map $\Xi$ is not bounded-to-one.
\end{remark}

\subsection{Surjectivity of the semi-conjugacy on probability measures}\label{sec:suronmeas}

Since the roof function
$\rho$ on $\cal U\subset \Omega$ 
is continuous, uniformly bounded and positive, every $\sigma$-invariant 
Borel probability
measure $\nu$ on $\Omega$ which gives full mass to ${\cal U}$ 
induces an invariant measure $\tilde \nu$
for  the suspension flow $(X,\Theta^t)$ 
of total mass $\int \rho d\nu<\infty$. The measure
$\tilde \nu$ is defined by $d\tilde \nu=d\nu\times dt$ where
$dt$ is the Lebesgue measure on the flow lines of the
suspension flow. 

Since by Lemma \ref{finite}, 
every $q\in {\cal U\cal Q}$ has a neighborhood $V$ in ${\cal Q}$ so that 
the cardinality of the preimage under $\Xi$ of any point $q\in V$ in bounded from 
above by a constant not depending on $q$, we can push forward the 
measure $\tilde \nu$ with the semi-conjugacy $\Xi$ and obtain 
a nontrivial finite $\Phi^t$-invariant Borel measure on 
${\cal Q}$ which we may 
normalize to have total mass one. 
Thus if ${\cal M}_\sigma({\cal U})$ denotes the space
of all $\sigma$-invariant Borel probability measures on $\Omega$ 
which give full measure to ${\cal U}$ then 
$\Xi$ induces a map 
\[\Xi_*:{\cal M}_\sigma({\cal U}) \to {\cal M}_{\rm inv}({\cal Q})\]
where ${\cal M}_{\rm inv}({\cal Q})$ is the space 
of $\Phi^t$-invariant Borel
probability measures on ${\cal Q}$. 
We equip both spaces with the
weak$^*$-topology. 
We have

\begin{lemma}\label{continuous}
The map $\Xi_*$ is continuous.
\end{lemma}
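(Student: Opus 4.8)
The plan is to test against continuous functions on ${\cal Q}$. Fix a bounded continuous function $f$ on ${\cal Q}$. By construction $\Xi_*\nu$ is the normalized push-forward under the semi-conjugacy $\Xi$ of the suspension measure $d\nu\times dt$ on $X$; hence for every $\nu\in{\cal M}_\sigma({\cal D\cal U})$
\[
\int_{\cal Q}f\,d(\Xi_*\nu)=\frac{1}{\int_{\cal D\cal U}\rho\,d\nu}\int_{\cal D\cal U}g_f\,d\nu,
\qquad
g_f(x)=\int_0^{\rho(x)}f\bigl(\Phi^t\Xi(x)\bigr)\,dt,
\]
where $\Xi(x)\in{\cal Q}$ is the quadratic differential attached to $x\in{\cal D\cal U}$ in (\ref{xi}). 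By Lemma \ref{bounded} we have $\rho>0$ on ${\cal D\cal U}$, and $\nu({\cal D\cal U})=1$, so the denominator is positive. Thus it suffices to prove that $\nu\mapsto\int\rho\,d\nu$ and $\nu\mapsto\int g_f\,d\nu$ are weak$^*$-continuous on ${\cal M}_\sigma({\cal D\cal U})$.

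Since $\rho$, $g_f$ and $\Xi$ are only defined on ${\cal D\cal U}$ (resp.\ ${\cal U}$), I would first extend them to all of $\Omega$. Because ${\rm Mod}(S)$ acts freely on numbered large train tracks, there is a Borel map $x\mapsto(\tau_i)$ assigning to $x\in\Omega$ a realizing full numbered splitting sequence, together with a Borel selection $x\mapsto(\mu_x,[\nu_x])$ with $\mu_x\in{\cal V}_0(\tau_0)\cap\bigcap_{i\geq0}{\cal V}(\tau_i)$ and $[\nu_x]\in\bigcap_{i\leq0}{\cal P\cal V}^*(\tau_i)$; on ${\cal D\cal U}$ these sets are singletons and the selection reproduces the data of Section 5. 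Set $\bar\rho(x)=-\log\mu_x(\tau_1)$, so $0\leq\bar\rho\leq p\log 2$ by the mass estimate in the proof of Lemma \ref{bounded}. When $\mu_x$ and $\nu_x$ jointly fill $S$, let $\Xi(x)\in{\cal Q}$ be the class of the quadratic differential of the pair normalized by $\iota(\mu_x,\nu_x)=1$, and put $\bar g_f(x)=\int_0^{\bar\rho(x)}f(\Phi^t\Xi(x))\,dt$; otherwise put $\bar g_f(x)=0$. Then $\bar g_f$ is Borel with $|\bar g_f|\leq p\log 2\cdot\|f\|_\infty$.

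The crux is that $\bar\rho$ and $\bar g_f$ are continuous at every point of ${\cal D\cal U}$. Let $x\in{\cal D\cal U}$ and $x_n\to x$ in $\Omega$. Because $x$ is doubly uniquely ergodic, the decreasing compacta ${\cal V}_0(\tau_0)\cap{\cal V}(\tau_N)$ and ${\cal P\cal V}^*(\tau_{-N})$ shrink, as $N\to\infty$, to $\{\mu_x\}$ and $\{[\nu_x]\}$; given $\epsilon>0$ fix $N$ with both of diameter at most $\epsilon$. For $n$ large $x_n$ agrees with $x$ on $[-N,N]$; a full numbered splitting sequence realizing $x_n$ is unique up to a single mapping class, so there is $g_n\in{\rm Mod}(S)$ such that, after applying $g_n^{-1}$, a realizing sequence for $x_n$ coincides with $(\tau_i)$ on $[-N,N]$, and then $g_n^{-1}\mu_{x_n}\in{\cal V}_0(\tau_0)\cap{\cal V}(\tau_N)$ and $g_n^{-1}[\nu_{x_n}]\in{\cal P\cal V}^*(\tau_{-N})$, no matter how the selection for $x_n$ was made. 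Hence $g_n^{-1}\mu_{x_n}\to\mu_x$ and $g_n^{-1}[\nu_{x_n}]\to[\nu_x]$. Since $\rho$, $g_f$ and $\Xi$ are ${\rm Mod}(S)$-invariant by construction, continuity of the total-weight functional gives $\bar\rho(x_n)=-\log\bigl((g_n^{-1}\mu_{x_n})(\tau_1)\bigr)\to-\log\mu_x(\tau_1)=\rho(x)$; and since joint filling is an open condition and the passage from a normalized jointly filling pair to its quadratic differential in ${\cal Q}$ is continuous, $\Xi(x_n)\to\Xi(x)$. Dominated convergence (integrands bounded by $\|f\|_\infty$, upper limits convergent) then yields $\bar g_f(x_n)\to\bar g_f(x)=g_f(x)$.

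To finish, $\bar\rho$ and $\bar g_f$ are bounded Borel functions whose discontinuity sets are contained in the $\nu$-null set $\Omega\setminus{\cal D\cal U}$, hence they are $\nu$-almost everywhere continuous for every $\nu\in{\cal M}_\sigma({\cal D\cal U})$. The portmanteau theorem for bounded almost everywhere continuous integrands then gives $\int\bar\rho\,d\nu_n\to\int\bar\rho\,d\nu$ and $\int\bar g_f\,d\nu_n\to\int\bar g_f\,d\nu$ whenever $\nu_n\to\nu$ weak$^*$ in ${\cal M}_\sigma({\cal D\cal U})$; since $\bar\rho=\rho$, $\bar g_f=g_f$ on ${\cal D\cal U}$ and all measures concerned are carried by ${\cal D\cal U}$, this is precisely the weak$^*$-continuity of the two functionals from the first paragraph, and therefore $\Xi_*$ is continuous. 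The one genuinely delicate step is the third paragraph: one must arrange that the splitting sequences realizing $x_n$ and $x$ literally agree on an initial block — they are only canonical up to ${\rm Mod}(S)$ — so that the nested cones carried by $x_n$ lie inside those carried by $x$; once this bookkeeping is done, ${\rm Mod}(S)$-equivariance of $\rho$, $g_f$ and $\Xi$ removes the remaining ambiguity and the rest is soft.
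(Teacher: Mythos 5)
Your proof is correct and follows the same basic approach as the paper: reduce to sequential continuity, use the positivity and boundedness of $\rho$ to control the normalization, and use continuity of $\Xi$ to pass to the limit after testing against continuous functions on ${\cal Q}$. Where you add real value is in your third and fourth paragraphs. The paper simply asserts that $\mu_i\to\mu$ in ${\cal M}_\sigma({\cal D\cal U})$ implies $\int\rho\,d\mu_i\to\int\rho\,d\mu$ and that continuity of $\Xi$ gives $\int f\circ\Xi\,d\tilde\mu_i\to\int f\circ\Xi\,d\tilde\mu$, without addressing the fact that $\rho$ and $\Xi$ are only defined and continuous on the dense invariant Borel set ${\cal D\cal U}\subsetneq\Omega$, so that weak$^*$ convergence of measures on the compact space $\Omega$ does not automatically yield convergence of these integrals. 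Your device of extending $\rho$ and $g_f$ to bounded Borel functions on all of $\Omega$ that are continuous at every point of ${\cal D\cal U}$ — using the nesting ${\cal V}_0(\tau_0)\cap{\cal V}(\tau_N)\downarrow\{\mu_x\}$ and ${\cal P\cal V}^*(\tau_{-N})\downarrow\{[\nu_x]\}$ for doubly uniquely ergodic $x$, together with the ${\rm Mod}(S)$-equivariance of the selection — and then invoking the portmanteau theorem for bounded $\mu$-a.e.\ continuous integrands (valid since the limit measure is carried by ${\cal D\cal U}$) is precisely what is needed to make this step rigorous. Your argument is therefore not a different route but a careful completion of the one the paper sketches.
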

\begin{proof}
Since $\Omega$ is a compact metrizable space,
the space of all Borel probability measures on $\Omega$ equipped with
the weak$^*$-topology is compact and metrizable.
Thus we only have to show that whenever $\mu_i\to \mu$
in ${\cal M}_\sigma({\cal U})$
then $\Xi_*(\mu_i)\to \Xi_*(\mu)$.

Now by Lemma \ref{bounded},
the function $\rho$ is continuous on ${\cal U}$,
bounded and positive
and hence if $\mu_i\to \mu$ in ${\cal M}_\sigma({\cal U})$
then $\int\rho d\mu_i\to \int \rho d\mu>0$.
In particular, we have $\tilde \mu_i(X)\to \tilde \mu(X)$ 
where $\tilde \mu_i,\tilde \mu$ are the finite Borel
measures on the suspension space 
$(X,\Theta^t)$ defined by the measures $\mu_i,\mu$.
Therefore it holds $\Xi_*(\mu_i)\to \Xi_*(\mu)$ if and only
if for every continuous function $f$ on ${\cal Q}$ 
with compact support we have $\int f\circ \Xi \,d\tilde\mu_i\to 
\int f\circ \Xi \,d\tilde\mu$. However, since
$\Xi$ is continuous this is immediate.
\end{proof}

The next result completes the proof of Theorem \ref{coding}
from the introduction. For its formulation, denote by
${\cal M}_\Theta(X)$ the space of $\Theta$-invariant Borel probability
measures on $X$. Denote again by $\Xi_*:{\cal M}_\Theta(X)\to 
{\cal M}_{\rm inv}({\cal Q})$ the natural map.

\begin{lemma}\label{dense}
The map 
$\Xi_*:{\cal M}_\Theta(X)\to {\cal M}_{\rm inv}({\cal Q})$ 
is surjective.
\end{lemma}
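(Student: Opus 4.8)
The plan is to prove surjectivity by showing, first, that \emph{every} $\mu\in{\cal M}_{\rm inv}({\cal Q})$ is carried by the set ${\cal U\cal Q}=\Xi(X)$, and then lifting $\mu$ through the finite-to-one semi-conjugacy $\Xi$ in a canonical (hence automatically invariant) way. For the first point I would write ${\cal Q}=\bigcup_nK_n$ as an increasing union of the compact sets $K_n$ of differentials all of whose saddle connections have length $\geq 1/n$. Since $\mu$ is a probability measure $\mu(K_n)>0$ for large $n$, and Poincar\'e recurrence for the time-one maps $\Phi^{\pm1}$ shows that $\mu$-almost every point of $K_n$ lies on a $\Phi^t$-orbit returning to $K_n$ at arbitrarily large positive and at arbitrarily large negative times. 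A saturation argument — the $\Phi^t$-invariant set $\bigcup_n\Phi^{\mathbb{R}}R_n$, where $R_n\subset K_n$ is the set of points recurrent to $K_n$ in both time directions, meets each $K_n$ in a conull set, hence is conull — upgrades this to $\mu$-almost every $q\in{\cal Q}$. For such $q$, Lemma \ref{minimalfull} applied to a lift gives that the vertical measured geodesic lamination is strongly uniquely ergodic with support in ${\cal L\cal L}({\cal Q})$; applying the same to the flip ${\cal F}\colon q\mapsto -q$ (rotation by $\pi/2$ in the circle action, which preserves the component ${\cal Q}$, conjugates $\Phi^t$ to $\Phi^{-t}$ and interchanges the two foliations) gives the same for the horizontal lamination. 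Hence $q\in{\cal U\cal Q}$, so $\mu({\cal U\cal Q})=1$.

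For the lift — the crux — I would use that $X$ is a standard Borel space and $\Xi\colon X\to{\cal Q}$ is Borel and finite-to-one (Corollary \ref{semi}, via Lemma \ref{finite}). By the Lusin--Novikov theorem $X$ splits into countably many Borel pieces on which $\Xi$ is injective; in particular ${\cal U\cal Q}=\Xi(X)$ is Borel and $k(q):=\#\Xi^{-1}(q)$ is a finite positive Borel function on ${\cal U\cal Q}$. Define the uniform lift $\tilde\nu$ of $\mu$ by
\[
\tilde\nu(E)=\int_{{\cal U\cal Q}}\frac{\#\bigl(E\cap\Xi^{-1}(q)\bigr)}{k(q)}\,d\mu(q)\qquad(E\subset X\ \text{Borel}).
\]
This is a Borel probability measure with $\Xi_*\tilde\nu=\mu$. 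The key observation is that $\Theta^t$ is invertible and $\Xi\circ\Theta^t=\Phi^t\circ\Xi$, so $\Theta^t$ maps $\Xi^{-1}(q)$ bijectively onto $\Xi^{-1}(\Phi^tq)$; thus $k$ is $\Phi^t$-invariant, and a one-line change of variables using invariance of $\mu$ yields $(\Theta^t)_*\tilde\nu=\tilde\nu$. So $\tilde\nu$ is a $\Theta^t$-invariant Borel probability measure on $X$ with $\Xi_*\tilde\nu=\mu$; it is exactly the finite-to-one property that makes this averaging legitimate.

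To finish, since $X$ is by construction the suspension of $({\cal D\cal U},\sigma)$ under the bounded positive continuous roof function $\rho$, I would invoke the standard correspondence between invariant measures of a suspension flow and of its base — the inverse of the assignment $\nu\mapsto\Xi_*(\nu)$ recalled before Lemma \ref{continuous} — to write $\tilde\nu=\bigl(\int\rho\,d\nu\bigr)^{-1}(\nu\times dt)$ for a unique $\sigma$-invariant Borel probability measure $\nu$ on ${\cal D\cal U}$; finiteness of the base measure uses Lemma \ref{bounded} together with the fact that every ${\cal D\cal U}$-orbit has infinite total $\rho$-length in both directions (the condition under which $\rho$ defines the flow $\Theta^t$ at all). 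By the definition of $\Xi_*$ this gives $\Xi_*(\nu)=\mu$, and since $\mu$ was arbitrary, $\Xi_*$ is surjective.

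I expect the conceptual heart to be the first step: the assertion that \emph{every} invariant probability measure, not merely the Lebesgue measure, must be carried by the uniquely ergodic locus ${\cal U\cal Q}$ on which the coding is finite-to-one, for which Poincar\'e recurrence and Masur's criterion (Lemma \ref{minimalfull}) are the essential tools. The measure-theoretic lift is soft once finite-to-oneness is available; the one point demanding genuine care there is verifying that the base measure on ${\cal D\cal U}$ extracted in the last step is actually finite, i.e. that $\rho$ is not "too small on too large a set" for $\mu$-typical orbits.
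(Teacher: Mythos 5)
Your proof takes a genuinely different route from the paper's, and it is largely sound, but one step of the last paragraph is not adequately justified.

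The paper first reduces to ergodic $\nu$, picks a density point $q$ for $\nu$ via the Birkhoff ergodic theorem so that the orbit averages $\nu_T=\tfrac1T\int_0^T\delta_{\Phi^tq}\,dt$ converge to $\nu$, uses Lemma \ref{minimalfull} and Poincar\'e recurrence to place $q$ in ${\cal U\cal Q}$, lifts this single orbit to a point $(x_i)\in{\cal D\cal U}$, and then shows, using the local finiteness from Lemma \ref{finite}, that the orbit averages $\tilde\nu_T$ of $\Theta^t$ through $(x_i)$ converge to an invariant measure that pushes to $\nu$. The base measure is thus produced directly as a weak limit of empirical measures $\tfrac1N\sum_{k<N}\delta_{\sigma^k(x_i)}$ on the \emph{compact} space $\Omega$, so it is automatically a probability measure; the delicate point is then to show it gives full mass to ${\cal D\cal U}$, for which the local neighbourhoods $V$ and the preimage decomposition $\Xi^{-1}(V)=\bigcup_iW_i$ are used. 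Your approach instead keeps $\mu$ arbitrary (no ergodic decomposition), averages the fibres of $\Xi$ via Lusin--Novikov to produce the canonical lift $\tilde\nu$ on $X$, and then tries to disintegrate $\tilde\nu$ into a base measure. The fibre-averaged lift is a nice, canonical construction and is certainly correct; your derivation of its $\Theta^t$-invariance from the $\Phi^t$-invariance of the fibre-counting function $k$ is clean. Your first step (full measure of ${\cal U\cal Q}$) is also fine: Poincar\'e recurrence gives two-sided recurrence $\mu$-a.e., Lemma \ref{minimalfull} handles the vertical lamination, and applying it to the flip ${\cal F}q=-q$ (which stays in the component, being in the image of the connected circle action) handles the horizontal one.

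The gap is in the last step, where you pass from the $\Theta^t$-invariant probability $\tilde\nu$ on $X$ to a $\sigma$-invariant \emph{probability} $\nu$ on ${\cal D\cal U}$. You assert that finiteness of the base measure "uses Lemma \ref{bounded} together with the fact that every ${\cal D\cal U}$-orbit has infinite total $\rho$-length." This does not suffice: an upper bound $\rho\le p\log 2$ together with $\sum_k\rho(\sigma^k x)=\infty$ a.e.\ is compatible with the disintegrated base measure $\nu_0$ being an \emph{infinite} conservative invariant measure with $\int\rho\,d\nu_0<\infty$, because $\rho$ is not bounded below on the non-compact set ${\cal D\cal U}$ (cf.\ the remark at the end of Lemma \ref{bounded}). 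To rule this out you need some control on the small values of $\rho$ along $\mu$-typical orbits — for instance a sub-cross-section on which the induced return roof \emph{is} bounded below and which a.e.\ orbit hits, which in this setting comes from the recurrence to a fixed compact set established in the proof of Lemma \ref{normal}. The paper's route avoids the disintegration entirely by building the base measure as a limit of probability measures on $\Omega$, which is one reason it is phrased that way; if you want to keep the fibre-averaged lift (which is genuinely more elegant for non-ergodic measures), you should either supply the missing recurrence argument for finiteness, or first build the base measure as a weak limit of shift-empirical measures as the paper does and then observe that your $\tilde\nu$ must agree with the suspension of that limit.
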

\begin{proof} It suffices to show that every
ergodic $\Phi^t$-invariant Borel probability
measure on ${\cal Q}$ 
is contained in the image of
$\Xi_*$. 

Thus 
let $\nu$ be an ergodic $\Phi^t$-invariant
Borel probability measure on ${\cal Q}$. 
By the Birkhoff ergodic
theorem, there is a density point $q\in {\cal Q}$ 
for $\nu$ such that
the Borel probability measures 
\[\nu_T=\frac{1}{T}\int_{0}^T \delta_{\Phi^t q}dt\]
converge weakly to $\nu$ as $T\to \infty$ 
where $\delta_x$ denotes the Dirac mass at $x$.
By Lemma \ref{minimalfull} and the Poincar\'e
recurrence theorem,
we have
$q\in {\cal U\cal Q}$. Hence by Corollary \ref{semi}, 
up to possibly replacing $q$ by $\Phi^tq$ for some
$t\in \mathbb{R}$ there is some $(x_i)\in {\cal U}$
with $\Xi(x_i)=q$. 

By Lemma \ref{finite}, 
there is a neighborhood $V$ of 
$q$ in ${\cal Q}$ 
such that the preimage of $V\cap {\cal U\cal Q}$ under the map $\Xi$
is a \emph{finite} union of Borel sets $W_j\subset X$
($j=1,\dots,n$) with the property that the restriction of 
$\Xi$ to each of the sets $W_j$ is injective. 
Namely, note first that as $q\in {\cal U\cal Q}$, 
a preimage 
$\tilde q\in \tilde {\cal Q}$ of $q$ can not be a fixed point for 
a non-trivial element of ${\rm Mod}(S)$. Thus there exists
a neighborhood $\tilde V$ of $\tilde q$ which is mapped homeomorphically
into ${\cal Q}$. By making $\tilde V$ smaller if necessary, 
we may assume that $\tilde V$ 
has the 
properties stated in Lemma \ref{finite}. Then the projection $V$ of
$\tilde V$ to ${\cal Q}$ has the finiteness property we are 
looking for.

As $\nu_T\vert V\to \nu\vert V$ weakly as $T\to \infty$ 
and as the map $\Xi$ is equivariant with respect to the
suspension flow and the Teichm\"uller flow, we conclude
that the restriction to $\cup_{j=1}^nW_j$ of the Borel
probability measures 
\[\tilde \nu_T=\frac{1}{T}\int_0^T \delta_{\Theta^t(x_i)}dt\]
converge weakly to a measure on $\cup_iW_i$ which
projects to the measure $\nu$ on $V$. 

Since $q$ was an
arbitrary density point for $\nu$, we deduce that
the measures $\tilde \nu_T$ converge weakly to a
$\Theta^t$-invariant 
Borel probability measure on $X$ whose image
under the map $\Xi_*$ equals $\nu$. 
To be more precise, choose a countable partition 
${\cal V}=\cup_jV_j$ of 
a measurable subset of ${\cal U\cal Q}$ of full $\nu$-mass 
so that each $V_j$ has the properties stated in the second
paragraph of this proof. These sets define a countable collection of 
sets $W_j^\ell$ so that for each $j$, the collection
$\{W_j^\ell\mid \ell\}$ is finite and its union equals the preimage of 
$V_j$ under $\Xi$. By the discussion in the previous paragraph,
the restriction of the probability measures $\tilde \nu_T$ to 
$\cup_{j,\ell}W_j^\ell$ converges weakly to a measure $\tilde \nu$ which 
projects to $\nu$. But then $\tilde \nu$ is a probability measure
and hence a weak limit of the measures $\tilde \nu_T$. The measure
$\tilde \nu$ is invariant under the flow $\Theta^t$ and is mapped by
$\Xi$ to $\nu$. This completes the proof that 
$\Xi_*$ is surjective.
\end{proof}

Consider again the shift space $(\Omega,\sigma)$. Let $f:\Omega\to \mathbb{R}$
be a H\"older continuous function. Then $f$ defines an 
\emph{equilibrium state} $\mu_f$ which is an ergodic mixing 
$\sigma$-invariant Borel probability 
measure on $\Omega$. It is defined to be the unique invariant probability measure 
which maximizes the quantity $h_\mu+\int f d\mu$ where $h_\mu$ denotes the entropy of 
the invariant measure $\mu$. 
The measure $\mu_f$ gives full mass to normal sequences 
and hence it gives full support to the domain of the map $\Xi$. 
As a consequence, the following holds true.

\begin{theorem}\label{bernoulli} 
Any Gibbs equilibrium state on $\Omega$ induces via the map $\Xi_*$ a 
$\Phi^t$-invariant mixing Borel probability measure on ${\cal Q}$. 
\end{theorem}

\section{The measure of maximal entropy}\label{sec:measureofmax}

In this section we use the subshift of finite type constructed in
Sections \ref{sec:strata}-\ref{sec:symbolicdyn}  to 
show that for every component ${\cal Q}$ of 
a stratum, the $\Phi^t$-invariant 
probability measure $\lambda$ in the
Lebesgue measure class is the unique measure of maximal
entropy. For strata of abelian differentials, this
was earlier shown by Bufetov and Gurevich \cite{BG07}.

The strategy is as follows. Let $q\in {\cal Q}$ 
be any birecurrent point which is contained 
in its own $\alpha$-and $\omega$-limit set for the flow $\Phi^t$.
By the Poincar\'e recurrence theorem, 
for every $\Phi^t$-invariant Borel 
probability measure $\mu$ the set of 
such points is of full $\mu$-mass. Starting from the symbolic 
system constructed in Section \ref{sec:asymbolic}, 
we construct a topological Markov
shift on a countable set ${\cal S}$ of symbols, given
by a transition matrix $A=(a_{ij})_{{\cal S}\times {\cal S}}$.
The phase space of this shift is the space 
\[\Sigma=\{(y_i)\in {\cal S}^{\mathbb{Z}}\mid
a_{y_iy_{i+1}}=1\text{ for all }i\}.\]
We find a positive roof function 
$\phi:\Sigma\to (0,\infty)$ of bounded variation 
and only depending on the future
such that the suspension of the 
shift $T:\Sigma\to \Sigma$ with roof function $\phi$ admits
a \emph{bounded-to-one}
semi-conjugacy into $({\cal Q},\Phi^t)$. Its image ${\cal D}$ 
is $\Phi^t$-invariant and contains
all points $z\in {\cal Q}$ which contain 
the fixed quadratic differential 
$q$ in their $\alpha$-and $\omega$-limit set.
Since the Lebesgue measure $\lambda$ on ${\cal Q}$ has full support and is
ergodic 
under the Teichm\"uller flow, $\lambda$- almost every orbit for $\Phi^t$
is dense in ${\cal Q}$. Thus the set
${\cal D}$ is of full Lebesgue
measure. 

We use this coding and a result of Sarig \cite{S99}
to show that the supremum of the
entropies of all $\Phi^t$-invariant Borel probability measures on ${\cal Q}$ 
which give full mass to ${\cal D}$ is the supremum of the
entropies of all such measures which are supported in some compact
invariant subset of ${\cal Q}$. That the supremum of the entropies of 
\emph{all} invariant probability measures on ${\cal Q}$ supported in 
a compact subset of ${\cal Q}$ equals the 
entropy $h$ of the Lebesgue measure $\lambda$ was established in 
\cite{H10a}. Thus $\lambda$ is a measure of maximal entropy for the 
Teichm\"uller flow on ${\cal Q}$. 
We then apply  
a result of Buzzi and Sarig \cite{BS03} to conclude that there exists 
at most
one such measure. Together this shows
Theorem \ref{entropymax} from the introduction.
The implementation of this strategy is carried out in three subsections.

\subsection{A shift with countably many symbols}\label{sec:countable}

Let $q\in {\cal Q}$ be any point which is contained in 
both the $\alpha$- and $\omega$-limit set of its orbit
under the flow $\Phi^t$. The goal of this subsection is
to construct a shift $(\Sigma,T)$ 
with countably many symbols, a suspension over $\Sigma$
and a partial semi-conjugacy of this suspension into $({\cal Q},\Phi^t)$ 
so that the image of this map is the subset of ${\cal U\cal Q}$ of differentials
which contain $q$ in their $\alpha$- and $\omega$ limit set.

Let $\tilde q\in
\tilde {\cal Q}$ be a lift of $q$.
By Lemma \ref{minimalfull}, 
the vertical and horizontal measured geodesic
lamination of $\tilde q$ 
is strongly uniquely ergodic, with support in 
$ {\cal L\cal L}({\cal Q} )$.
Let $p>0$ be as in Lemma \ref{bounded}. 
By Lemma \ref{numberedtype} and 
Lemma \ref{finite}, there is a number $\ell\geq 1$, and there 
are $\ell$ large numbered train tracks 
$\tau_1,\dots,\tau_\ell\in {\cal E}({\cal Q})$
such that $\Phi^t\tilde q\in {\cal Q}^{\cal P}(\eta)$ for some
$t\in [0,p\log 2]$ and some $\eta\in {\cal N\cal T}({\cal Q})$ 
which is contained in ${\cal E}({\cal Q})$
if and only if $\eta\in \{\tau_1,\dots,\tau_\ell\}$
(see (\ref{qp}) for notations). 

By Lemma \ref{fillmore},
there is a number $n >0$ such that
the following holds true.
Let $i\leq \ell$ and let $(\sigma^i_j)_{0\leq j\leq n}$
be a full numbered splitting sequence
of length $n$ issuing from $\sigma^i_0=\tau_i$ with
the property that $\sigma^i_n$ carries the support $\zeta$
of the vertical measured geodesic lamination of $\tilde q$.
Then the sequence $(\sigma^i_j)_{0\leq j\leq n}$ is weakly tight
as defined in Definition \ref{weaktight}.

Recall the definition of the transition matrix for the 
subshift of finite type $(\Omega,\sigma)$ which defines admissible sequences.
Define ${\cal S}$ to be the set of all
finite admissible sequences $(x_i)_{0\leq i\leq s}$ 
with
the following additional properties.
\begin{enumerate}
\item $s\geq 2n$ and the sequences $(x_j)_{0\leq j\leq n}$ and 
$(x_j)_{s-n\leq j\leq s}$ are realized
by one of the full splitting sequences 
$(\sigma^i_j)_{0\leq j\leq n}$ $(i\leq n)$.
\item There is no number
$t\in [n,s-n)$ such that the sequence $(x_j)_{t\leq j\leq t+n}$ is 
realized by one of the full splitting sequences $(\sigma_j^i)_{0\leq j\leq n}$.
\end{enumerate}
Note that ${\cal S}$ is a countable set.

Define a transition matrix
$A=(a_{ij})_{{\cal S}\times {\cal S}}$ by requiring
that $a_{ij}=1$ if and only if 
the sequence $(x_\ell)_{0\leq \ell \leq s}$ 
representing the symbol $i$ and the 
sequence $(y_t)_{0\leq t\leq u}$ representing the symbol $j$ 
satisfy
$y_t=x_{s-n +t}$ for every $t\in \{0,\dots,n\}$.
By construction and the properties of the 
set ${\cal E}({\cal Q})$ established in Lemma \ref{numberedtype},  
\begin{equation}\label{bip}
\text{there are } i_1,\dots,i_N\in {\cal S}\text{ such that
for every }  u \in {\cal S},\, \exists j,v\text{ with }
a_{i_j u}a_{u i_v}=1.
\end{equation}
In other words, the transition matrix has the
\emph{big images and preimages (BIP) property}
as defined in \cite{S03}.

Let $\Sigma$ be the set of all biinfinite sequences
$(y_i)\subset {\cal S}^{\mathbb{Z}}$ with $a_{y_iy_{i+1}}=1$ for all $i$,
equipped with the (biinfinite) shift $T:\Sigma\to \Sigma$.
There is a natural continuous
injective map 
\[G:\Sigma\to \Omega\] whose
image contains the set of all normal sequences.
Here $\Omega$ is as in Lemma \ref{shift}. This map is defined by associating
to a sequence $(y_i)\in \Sigma$ the sequence $(x_j)\in \Omega$ obtained 
by viewing a symbol $y_i$ as a finite admissible word and noticing that 
the words represented by $y_i$ and $y_{i+1}$ overlap in the sense that
the subword of $y_i$ consisting of the last $n$ letters coincides with 
the subword of $y_{i+1}$ consisting of the first $n$ letters. 
Thus we can combine these two words to an admissible word obtained by erasing 
the last $n$ letters of $y_i$ and concatenating the resulting word with 
the word $y_{i+1}$.

\begin{lemma}\label{zoomingin}
It holds 
\[G(\Sigma)\subset {\cal U}.\]
\end{lemma}
\begin{proof} Let $(x_i)\in \Omega$ be a sequence in the image of 
a sequence in $\Sigma$ under the map $G$. 
Then there are numbers $i_1<i_2<\cdots$ so that for each $j$, there exists
some $s_j\in {\cal S}$ so that $s_j=(x_\ell)_{i_j-n\leq \ell \leq i_{j+1}}$.

Let $(\tau_i)\subset {\cal N\cal T}({\cal Q})$ be a full splitting sequence which
realizes $(x_i)$. 
Let $[\mu],[\nu]\in P{\cal V}(\tau_0)\cap \bigcap_jP{\cal V}(\tau_j)$.
We have to show that $[\mu]=[\nu]$ and that the support of $[\mu]$ is contained in 
${\cal L\cal L}({\cal Q})$. By the first requirement in the definition of 
weak tightness, applied to the finite admissible sequence $(x_\ell)_{i_j\leq \ell \leq i_{j+1}}$, 
the measures $[\mu],[\nu]$ are positive on each $\tau_n$, and there exists a number 
$\beta>0$ not depending on $j$ so that 
\begin{equation}\label{baseestimate}
([\mu] \mid [\nu])_{\tau_{i_j}}^{-1}\geq \beta.\end{equation}

The second property in the definition of a weakly tight sequence shows that 
there exists a constant $\delta >0$ not depending on $j$ so that
\begin{equation}\label{inductionback}
([\mu] \mid [\nu])_{\tau_{i_j}}^{-1} \geq 
([\mu] \mid [\nu])_{\tau_{i_{j+1}}}^{-1}(1-\delta) +\delta\end{equation}
for all $j$. An application of Lemma \ref{preserve} and Lemma \ref{recursion}
and letting 
$j$ tend to infinity yields 
$([\mu] \mid [\nu])_{\tau_0}=1$ and hence $[\mu]=[\nu]$. 
The fact that the support of $[\mu]$ is contained in 
${\cal L\cal L}({\cal Q})$ is a consequence of positivity 
as explained at the end of the proof of Proposition \ref{normal}.

The same argument is also valid by reversing the time, equivalently for 
$\cap {\cal V}^*(\tau_i)$. This shows the lemma.
\end{proof}

By the BIP-property (\ref{bip})
and the discussion at the end of Section \ref{sec:asymbolic}, we may
assume that the topological Markov chain
$(\Sigma,T)$ is topologically mixing.

\subsection{A roof function of bounded variation} 

Recall that the roof function $\rho$ is defined on 
${\cal U}\subset \Omega$. 
By Lemma \ref{zoomingin}, we then can  
define a roof function $\phi$ on $\Sigma$ by associating
to an infinite sequence $(y_i)\in \Sigma$ 
with 
$y_0=(x_i)_{0\leq i\leq s}$
the value 
\[\phi(y_i)=\sum_{i=0}^{s-n-1}\rho(\sigma^i(G(y_i))).\]
Note that by the definition of the roof function $\rho$, 
the value of $\phi$ on $(y_i)$ is just the logarithm of 
the total mass that the transverse measure 
$\mu\in {\cal V}^{\cal P}(x_{s-n})\cap \bigcap_{\ell\geq 0} {\cal V}(x_\ell)$
deposits on $x_0$. 
By the first requirement in the definition of a weakly 
tight sequence, if $p>0$ is as before the
number of branches of a train track in ${\cal N\cal T}({\cal Q})$, then 
this mass is at least $p$. In other words, 
the function $\phi$ is bounded from below by a 
positive constant $\log p>0$, is
unbounded and only depends on the future.

For $m\geq 1$ define the $m$-th variation of $\phi$ by
\[{\rm var}_m(\phi)=\sup\{\phi(y)-\phi(z)\mid
y_i=z_i\text{ for }i=0,\dots,m-1\}.\]
The following is the main technical tool towards the proof of 
Theorem \ref{entropymax}.

\begin{lemma}\label{variation}
There are numbers $\theta\in (0,1)$ and 
$L>0$ such that
${\rm var}_m(\phi)\leq L\theta^m$ for all $m\geq 1$. In particular,
\[\sum_{m\geq 1}{\rm var}_m(\phi)<\infty.\] 
\end{lemma}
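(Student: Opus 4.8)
The plan is to reduce the variation estimate for $\phi$ to the analogous exponential-contraction estimate for the roof function $\rho$ established implicitly in the proof of Lemma~\ref{roofcont} and quantified by Lemma~\ref{fillmore}. Recall that $\phi(y_i)=\sum_{i=0}^{s-k}\rho(\sigma^i(G(y_i)))$ where $y_0$ is the symbol given by a finite admissible word of length $s\geq 2k$. The key point is that the symbols in ${\cal S}$ have uniformly controlled ``overhang'': two symbols are related by $a_{ij}=1$ only when the terminal block of length $k$ of the first coincides with the initial block of length $k$ of the second, so a long admissible word $(y_i)_{0\leq i\leq n-1}$ in $\Sigma$ determines a long admissible word in $\Omega$ of length roughly $\sum_{j<n}(s_j-k)$ for the underlying full splitting sequence. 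First I would make this bookkeeping precise: if $y$ and $z$ agree in the first $n$ coordinates of $\Sigma$, then $G(y)$ and $G(z)$ agree in the first $N(n)$ coordinates of $\Omega$, and there is a constant $\kappa>0$, depending only on the (finite) set of realizing splitting sequences $(\sigma^i_j)$, with $N(n)\geq \kappa n$.

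Next I would invoke the quantitative contraction along weakly tight splitting sequences. By Lemma~\ref{fillmore}(3), a full splitting sequence of length $k$ which carries a uniquely ergodic lamination contracts the relevant projective simplex of transverse measures by a definite factor; iterating along a full splitting sequence which realizes a long agreement block gives that the projective classes determined by $G(y)$ and $G(z)$ lie in a set of diameter at most $C\theta_0^{N(n)}$ in ${\cal P\cal M\cal L}$, for some $\theta_0\in(0,1)$ and $C>0$ depending only on ${\cal E}({\cal Q})$. Since $\rho$ is, on the relevant compact piece of ${\cal V}_0(\tau_0)$, a Lipschitz function of this projective class (it is $-\log$ of the total mass of the induced transverse measure, a smooth positive function on a compact convex set, cf.\ the proof of Lemma~\ref{roofcont}), we get $|\rho(\sigma^i(G(y)))-\rho(\sigma^i(G(z)))|\leq C'\theta_0^{N(n)-i}$ for each $i$ with $0\leq i\leq N(n)$, and a uniform bound $\rho\leq p\log 2$ from Lemma~\ref{bounded} for the remaining indices. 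Here I must be slightly careful: the number of summands $s-k$ defining $\phi(y_0)$ is unbounded over ${\cal S}$, so I cannot simply bound $|\phi(y)-\phi(z)|$ by a constant times the number of terms. Instead I split the sum defining $\phi(y_0)-\phi(z_0)+\phi(\sigma^{\cdot}y)-\cdots$ according to whether the index into the $\Omega$-splitting sequence is below or above $N(n)/2$; the low-index terms each contribute at most $C'\theta_0^{N(n)/2}$ and there are at most $N(n)$ of them, while the high-index terms are governed by later coordinates of $y,z$ which need not agree but whose contributions telescope because both $\phi(y)$ and $\phi(z)$ are finite and $\rho>0$ — more precisely one uses that $\phi$ only depends on the future together with the exponential decay, summing a geometric series.

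The cleanest way to organize the last point is the standard one for suspensions over countable Markov shifts: show directly that $\phi=\psi\circ G$ restricted to agreement of the first $n$ $\Sigma$-coordinates differs by at most $\sum_{j\geq \kappa n}L_0\theta_0^{\,j}\leq \dfrac{L_0}{1-\theta_0}\theta_0^{\kappa n}$, where $L_0$ absorbs the Lipschitz constant of $\rho$ and the bound $p\log 2$; then set $\theta=\theta_0^{\kappa}\in(0,1)$ and $L=L_0/(1-\theta_0)$. The convergence $\sum_{n\geq 1}{\rm var}_n(\phi)<\infty$ is then immediate as a geometric series. The main obstacle, and the step deserving the most care, is exactly the reduction in the previous paragraph: matching the combinatorics of the coarse symbols in ${\cal S}$ (whose lengths vary) with the fine splitting sequence in $\Omega$, and confirming that the ``tail'' of the sum defining $\phi$ on the two sequences — the part corresponding to $\Omega$-coordinates beyond where $G(y)$ and $G(z)$ are forced to agree — is itself exponentially small rather than merely bounded. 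This is where one genuinely uses that $(\sigma^i_j)$ is a \emph{finite} list of weakly tight sequences, so that agreement of one more $\Sigma$-symbol forces agreement of at least $s_{\min}-k\geq k\geq 1$ further $\Omega$-coordinates and hence one further factor of contraction, giving the geometric decay with a uniform ratio.
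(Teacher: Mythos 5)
Your overall strategy — reduce the variation of $\phi$ to iterated applications of the contraction statement in Lemma~\ref{fillmore}(3) — is the same one the paper uses, and your final bound is correct. But there is a genuine gap in the middle of the argument, and it sits precisely at the step you flag as ``deserving the most care.''

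The problem is the claim that ``a full splitting sequence of length $k$ which carries a uniquely ergodic lamination contracts the relevant projective simplex of transverse measures by a definite factor,'' from which you derive a contraction rate $\theta_0^{N(n)}$ that scales with the number $N(n)$ of common $\Omega$-coordinates. Lemma~\ref{fillmore}(3) does not say this. The number $k$ in Lemma~\ref{fillmore} is chosen (after fixing $\tau_0$ and a uniquely ergodic $\zeta$, and taken uniformly only over the \emph{finite} list $\tau_1,\dots,\tau_n$ attached to $q$) precisely so that the particular weakly tight segments $(\sigma^i_j)_{0\le j\le k}$ contract; a generic block of length $k$ inside a long splitting sequence has no reason to satisfy the conclusion — the carrying matrix need not be uniformly positive, and no $\delta>0$ is guaranteed. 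Already for $n=1$ your claimed contraction $\theta_0^{N(1)}$ would be arbitrarily strong as $s_0\to\infty$, which is false: the improvement is a single factor $1-\delta$, not exponential in the length of the symbol. The correct statement is one contraction factor \emph{per ${\cal S}$-symbol}, because the symbols are defined so that each begins with exactly one of the weakly tight blocks $(\sigma^i_j)_{0\le j\le k}$ and contains no other such block in its interior. Thus agreement of $n$ $\Sigma$-coordinates gives an inductive improvement $1-\kappa^n$ of the branch-ratio (with $\kappa=1-\delta$), independent of the symbol lengths $s_j$. Your final paragraph gestures at exactly this (``agreement of one more $\Sigma$-symbol forces \dots one further factor of contraction'') but justifies it by the extra $\Omega$-coordinates rather than by the presence of the weakly tight block, which is the actual structural reason; as stated the two halves of your argument are inconsistent in which quantity the rate is exponential in.

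A secondary simplification you miss is that the term-by-term bound and the low/high index split are unnecessary. Unwinding the definition of $\rho$ one sees that $\phi(y)=\sum_{j=0}^{s-k}\rho(\sigma^j G(y))$ telescopes into $-\log$ of a single total weight of the normalized lamination on a later train track, so $|\phi(y)-\phi(z)|$ is exactly $|\log\mu(\tau_0)-\log\nu(\tau_0)|$ for the two normalized measures $\mu,\nu$ on $\tau_{\ell_0-k}$; the paper then bounds this by $-\log a_0\le -\log(1-\kappa^{n-1})$ using the branch-ratio estimate, and invokes $-\log(1-t)/t\to 1$ to land the geometric bound in one step. This avoids any Lipschitz-in-${\cal P\cal M\cal L}$ detour and makes the unbounded number of summands a non-issue.
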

\begin{proof}
Let $m\geq 1$ and let $(y_i),(z_i)\in \Sigma$ be such that
$y_i=z_i$ for $i=0,\dots,m$.
By definition,
there is a finite full numbered splitting sequence
$(\tau_i)_{0\leq i\leq u}\subset {\cal E}({\cal Q})$, there
are numbers $n\leq \ell_1-n<\ell_1\leq \dots <\ell_{m}=u$,
and there are two uniquely ergodic 
measured geodesic laminations $\mu,\nu\in {\cal V}^{\cal P}(\tau_{\ell_1-n})$ 
with support in ${\cal L\cal L}({\cal Q})$ 
such that the following holds.
\begin{enumerate}
\item The measured geodesic lamination
$\mu,\nu$ projects to the vertical measured geodesic
lamination of $\Xi G(y_i),\Xi G(z_i)$, respectively
(where $\Xi$ is as in Section \ref{sec:symbolicdyn}). 
\item $\mu,\nu$ are carried by $\tau_u$.
\item The sequence $(\tau_i)_{0\leq i\leq n}$ and 
each of the sequences 
$(\tau_i)_{\ell_j-n\leq i\leq \ell_j}$ $(j\leq m)$
is weakly tight.
\item $\phi(y_i)=\sum_{j=0}^{\ell_1-n-1}\rho(\sigma^j G(y_i))$ and
$\phi(z_i)=\sum_{j=0}^{\ell_1-n-1}\rho(\sigma^j G(z_i))$.
\end{enumerate}

Let $\beta \in (0,1/2)$ and  
$\delta \in (0,\beta^2)$ be the control constants for the weakly tightness
property of the finite admissible sequences which were used for the
construction of the alphabet ${\cal S}$. We claim that for 
$\kappa=1-\delta
\in (1/2,1)$
we have 
\begin{equation}\label{inductionhyp}
1-\kappa^m\geq ([\mu]\mid [\nu])_{\tau_{\ell_1-n}}^{-1}.\end{equation}

For this recall from the definition of a weakly
tight sequence that for all $j$ 
we have $\mu(b)/\mu(b^\prime)\geq \beta $ for all branches $b,b^\prime$ of
$\tau_{\ell_j-n}$, and similarly for $\nu$.

Let $b_0$ be any branch of $\tau_{\ell_j-n}$ and let
$\hat \nu$ be the multiple of $\nu$ so that
$\hat \nu(b_0)=\mu(b_0)$. Then for all 
branches $b$ of $\tau_{\ell_j-n}$ we have 
\[\mu(b)/\hat \nu(b)\geq
\beta^2 \mu(b_0)/\hat \nu(b_0)=\beta^2\] and similarly
$\hat \nu(b)/\mu(b)\geq \beta^2$. 
This implies that indeed, 
$([\mu]\mid [\nu])_{\tau_{\ell_j-n}}^{-1}\geq \beta^2\geq 1-(1-\delta)$ for all $j$.
The claim now follows from Lemma \ref{recursion}, 
Lemma \ref{preserve} and 
Lemma \ref{fillmore}.

Let for the moment $\tau\in {\cal N\cal T}({\cal Q})$ be arbitrary and let 
$\zeta,\xi  \in {\cal V}^{\cal P}(\tau)$ be positive. Assume that 
$([\zeta]\mid [\xi])_{\tau} =a\geq 1$. Let $\hat \xi=c\xi\in {\cal V}(\tau)$
be such that
$\max \{\zeta(b)/\hat \xi(b),\hat \xi(b)/\zeta(b) \mid b\}=a$. Then we have 
\[a^{-1}\leq \hat \xi(\tau)\leq a \]
and hence 
\[\max\{\zeta(b)/\xi(b),\xi(b)/\zeta(b)\mid b\}\leq a^2.\]

An application of this simple estimate to normalized positive transverse measures
$\mu,\nu\in {\cal V}^{\cal P}(\tau_{\ell_1-n})$ 
on $\tau_{\ell_1-n}$ with $([\mu]\mid [\nu])_{\tau_{\ell_1-n}}=a\geq 1$ 
together with Lemma \ref{preserve} yields
$\mu(\tau_0)/\nu(\tau_0)\in [a^2,a^{-2}]$ and therefore 
if $\mu_0,\nu_0\in {\cal V}^{\cal P}(\tau_0)$ 
are the normalizations of $\mu,\nu$, then 
\[\vert \phi(\mu_0)-\phi(\nu_0)\vert =
\vert \log\mu(\tau_0)-\log\nu(\tau_0)\vert \leq -2\log a.\]

As a consequence, the estimate (\ref{inductionhyp}) 
implies that 
\[\vert \phi(y_i)-\phi(z_i)\vert
\leq -2\log(1-\kappa^{m})\text{ if }y_i=z_i\text{  for }0\leq i\leq m.\]
As $\frac{-\log(1-t)}{t}\to 1$ $(t\to 0)$, from this
the lemma follows.
\end{proof}

By Lemma \ref{zoomingin}, the function $\phi$ is defined
on the entire space $\Sigma$. This is equivalent to stating that
the image of $\Sigma$ under the map $G$ is contained in the set
${\cal U}$ on which the roof function $\rho$ is defined.  
In particular, we can construct the
suspension
$(Y,\Psi^t)$ over $\Sigma$ with roof function
$\phi$.

\begin{lemma}\label{semiconjugacy} 
There is a bounded-to-one partial semiconjugacy
$\Upsilon:(Y,\Psi^t)\to 
({\cal Q},\Phi^t)$. Its image contains
the set of all points $z$ whose $\alpha$- and 
$\omega$ limit set contains $q$.
\end{lemma}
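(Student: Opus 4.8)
The plan is to construct the semiconjugacy $\Upsilon$ by extending the map $\Xi$ from Section 5 through the natural factor map $G:\Sigma\to\Omega$, and then to identify its image. First I would observe that by Lemma \ref{variation} the roof function $\phi$ is continuous on all of $\Sigma$, so the suspension $(Y,\Psi^t)$ is well-defined; a point of $Y$ is a pair $((y_i),t)$ with $0\le t<\phi(y_i)$. The factor map $G$ sends $(y_i)\in\Sigma$ to the biinfinite admissible $\{1,\dots,k\}$-sequence obtained by concatenating the blocks $y_i\in{\cal S}$ (overlapping in the length-$k$ tight pieces as prescribed by the transition matrix $A$). By the remark following Lemma \ref{preserve} together with weak tightness of each block and Lemma \ref{preserve} itself, every sequence in $G(\Sigma)$ lies in ${\cal D\cal U}$, so $\Xi\circ G$ is defined on $\Sigma$. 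Since $\phi$ is exactly the Birkhoff sum $\sum_{j=0}^{s-k}\rho(\sigma^j(\cdot))$ of the old roof function $\rho$ over the block $y_0$, the suspension $(Y,\Psi^t)$ is canonically a sub-suspension of $(X,\Theta^t)$ restricted to $G(\Sigma)$: a flow segment of length $\phi(y_i)$ over $y_i$ decomposes into the concatenation of flow segments of length $\rho(\sigma^j G(y_i))$. Hence $\Upsilon:=\Xi$ restricted (and reparametrised along flow lines in the obvious way) gives a continuous map $\Upsilon:(Y,\Psi^t)\to({\cal Q},\Phi^t)$ intertwining $\Psi^t$ and $\Phi^t$; continuity is checked exactly as in the proof of Lemma \ref{roofcont} and Corollary \ref{semi}.

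Next I would prove $\Upsilon$ is bounded-to-one, which is the main improvement over $\Xi$ (the Remark after Corollary \ref{semi} warns $\Xi$ itself is only finite-to-one, not bounded-to-one). The key point is that the symbols in ${\cal S}$ all begin and end with one of the finitely many tight blocks $(\sigma^i_j)_{0\le j\le k}$, $i\le n$, associated to the fixed birecurrent point $q$. Given $z\in{\cal Q}$ in the image, lift to $\tilde z\in\tilde{\cal Q}$; by Lemma \ref{minimalfull} its vertical lamination $\zeta$ is uniquely ergodic with support in ${\cal L\cal L}({\cal Q})$, so by \cite{H09a} there is a \emph{unique} full numbered splitting sequence through any starting train track carrying $\zeta$. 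A preimage in $Y$ corresponds to a choice of a point on this canonical splitting line together with a way to parse it into ${\cal S}$-symbols; since the parsing is forced as soon as one knows where the length-$k$ tight windows for the $\tau_i$ occur, and since by Lemma \ref{finite} (applied at $q$, whose $\alpha$- and $\omega$-limit set contains $z$ when $z$'s limit sets contain $q$, and conversely) only finitely many train tracks $\tau_1,\dots,\tau_n$ of the relevant combinatorial types arise in a bounded flow-time window around $q$, the number of admissible starting points and parsings is bounded by a constant depending only on $n$, $k$ and $p$. This is the step I expect to be the main obstacle: carefully showing that the flip/flow ambiguity and the ambiguity of where along the orbit a ${\cal S}$-block starts is \emph{uniformly} bounded, rather than merely finite, which forces one to exploit that all symbols of ${\cal S}$ share the same finite vocabulary of boundary tight blocks coming from the single point $q$.

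Finally I would identify the image of $\Upsilon$. If $z\in{\cal Q}$ has $q$ in both its $\alpha$- and $\omega$-limit set, then by Lemma \ref{minimalfull} the vertical and horizontal laminations of a lift $\tilde z$ are strongly uniquely ergodic with support in ${\cal L\cal L}({\cal Q})$, so $\tilde z\in{\cal U}\tilde{\cal Q}$ and by Corollary \ref{semi} there is $(x_i)\in{\cal D\cal U}$ with $\Xi(x_i)=z$ (after a time shift). Because the forward and backward orbit of $z$ returns near $q$ infinitely often, and near $q$ the train track of the flow is one of $\tau_1,\dots,\tau_n$ by Lemma \ref{finite}, the splitting sequence realising $(x_i)$ passes infinitely often in both directions through the tight blocks $(\sigma^i_j)_{0\le j\le k}$; by the definition of ${\cal S}$ as the maximal-length admissible blocks whose only interior tight windows are at the two ends, the sequence $(x_i)$ then parses uniquely into a biinfinite string of ${\cal S}$-symbols, i.e.\ lies in $G(\Sigma)$. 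Hence $z\in\Upsilon(Y)$. Conversely every point of $\Upsilon(Y)$ is $\Xi G(y_i)$ for some $(y_i)\in\Sigma$ whose orbit, again because each ${\cal S}$-symbol carries a tight boundary block for some $\tau_i$, returns arbitrarily close to $q$ in forward and backward time, so its limit sets contain $q$. This shows $\Upsilon(Y)$ is the $\Phi^t$-invariant set described, completing the proof.
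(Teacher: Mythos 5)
Your proof follows the same structure as the paper's very terse argument: construct $\Upsilon$ as the composition of the obvious suspension factor map $(Y,\Psi^t)\to(X,\Theta^t)$ with $\Xi$, derive bounded-to-one-ness from Lemma~\ref{finite}, and identify the image by observing that recurrence of an orbit to a neighbourhood of $q$ forces the tight boundary blocks $(\sigma^i_j)_{0\le j\le k}$ to appear infinitely often in both directions, giving a parsing into ${\cal S}$-symbols. You fill in considerably more detail than the paper (which does not even spell out the image claim), and your elaboration of why the ambiguity in parsing is uniformly bounded — all symbols sharing the same finite vocabulary of boundary blocks tied to $q$ — is the right gloss on the paper's one-line appeal to Lemma~\ref{finite}. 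One small correction: the parenthetical ``whose $\alpha$- and $\omega$-limit set contains $z$ when $z$'s limit sets contain $q$, and conversely'' asserts a symmetry that does not hold for flows and is also unnecessary; Lemma~\ref{finite} is applied at the fixed point $q$, and the only input required is that the orbit of $z$ enters a fixed neighbourhood $V$ of $q$ infinitely often in forward and backward time, which is precisely what ``$q$ lies in both limit sets of $z$'' gives you.
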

\begin{proof} By construction of the roof function $\phi$ and Lemma \ref{zoomingin},
there is an obvious partial semi-conjugacy
$(Y,\Psi^t)\to (X,\Theta^t)$ whose image is contained in the domain of the 
semi-conjugacy $\Xi:(X,\Theta^t)\to ({\cal U\cal Q},\Phi^t)$
defined in Section \ref{sec:asymbolic}. 
The composition 
\[\Upsilon:(Y,\Psi^t)\to ({\cal Q},\Phi^t)\] 
of these maps is a partial semi-conjugacy. By construction, its image is the 
subset ${\cal D}$ of ${\cal U\cal Q}$ consisting of all points whose 
orbit contains $q$ in its $\alpha$- and $\omega$ limit set. 

We are left with showing
that $\Upsilon$ is bounded-to-one. 
Now note that by the construction of the alphabet ${\cal S}$, if 
$(y_i)\in \Sigma$ then the image of $(y_i)$ under the map 
$\Xi\circ G:\Sigma\to {\cal Q}$ is contained in a fixed 
small neighborhood $V$ of the differential $q$ used in the construction with 
the additional property that the cardinality of the preimage of 
$V$ in $\Omega$ is bounded from above by a fixed constant $k>0$.
But this yields that the map $\Upsilon$ is a most $k$-to-one.
\end{proof}

\subsection{Entropy computation} 

As before, let $h$ be the entropy of the $\Phi^t$-invariant 
Lebesgue measure on ${\cal Q}$. 
Let ${\cal M}_T(\Sigma)$ be the space of all $T$-invariant
Borel probability measures on $\Sigma$.  For 
$\mu\in {\cal M}_T(\Sigma)$ let 
\[{\rm pr}_\mu(-h\phi)=h_\mu-h\int \phi d\mu\]
where $h_\mu$ is the entropy of $\mu$. 
By \cite{S99}, under the assumptions at hand, the
\emph{Gurevich pressure} of the function $-h\phi$ is given by
\begin{equation}\label{gurevich}
{\rm pr}_G(-h\phi)=\sup\{{\rm pr}_\mu(-h\phi)\mid
 \mu\in {\cal M}_T(\Sigma),
{\rm pr}_\mu(-h\phi)\text{ is well-defined}\}.\end{equation}

The following observation relies 
on the results of Sarig \cite{S99}
and on \cite{H10a}.

\begin{lemma}\label{finitepressure}
${\rm pr}_G(-h\phi)\leq 0$.
\end{lemma}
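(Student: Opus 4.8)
The plan is to deduce the estimate from the variational description of the Gurevich pressure, combined with the input from \cite{H10a} that the entropy $h$ of the Lebesgue measure bounds the entropy of every $\Phi^t$-invariant probability measure supported on a compact invariant subset of ${\cal Q}$.

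First I would invoke the structure theory of Sarig \cite{S99,S03}. The topological Markov chain $(\Sigma,T)$ is topologically mixing and has the BIP property (\ref{bip}), and by Lemma \ref{variation} the function $\phi$ has summable variations; hence the Gurevich pressure ${\rm pr}_G(-h\phi)$ is described both by the variational formula (\ref{gurevich}) and, by Sarig's approximation property, as the supremum of ${\rm pr}_G(-h\phi\vert_{\Sigma^\prime})$ over the topologically transitive subshifts of finite type $\Sigma^\prime$ obtained by restricting to a \emph{finite} subset of the alphabet ${\cal S}$. On such a finite-alphabet subsystem $\phi$ is bounded, since each of the finitely many symbols of the subsystem is a word of bounded length, and the pressure is the classical one: ${\rm pr}_G(-h\phi\vert_{\Sigma^\prime})=\sup\{h_\mu(T)-h\int\phi\,d\mu\mid \mu\in{\cal M}_T(\Sigma^\prime)\}$. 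So it suffices to show $h_\mu(T)\leq h\int\phi\,d\mu$ for every $T$-invariant probability measure $\mu$ carried by a finite-alphabet subsystem. (A measure $\mu$ on $\Sigma$ with $\int\phi\,d\mu=\infty$ contributes nothing to (\ref{gurevich}), since ${\rm pr}_\mu(-h\phi)$ is then $-\infty$ or undefined; the finite-alphabet reduction also disposes of this case.)

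Next, fix a finite-alphabet subsystem $\Sigma^\prime$ and let $(Y^\prime,\Psi^t)$ be the suspension of $(\Sigma^\prime,T)$ with roof $\phi$. The key geometric point is that $\Upsilon(Y^\prime)$ is contained in a compact subset of ${\cal Q}$: a biinfinite sequence in $\Sigma^\prime$ is a concatenation of finitely many return words, each beginning and ending with a weakly tight block of length $k$ realized by one of the sequences $(\sigma^i_j)_{0\leq j\leq k}$, and consecutive such blocks are separated by a uniformly bounded number of splits. By Lemma \ref{fillmore} and the argument of Lemma \ref{tightcontrol}, along the corresponding flow orbit the quadratic differentials project into a fixed compact set of ${\cal Q}$ at times with uniformly bounded gaps, and in between the orbit moves only a bounded amount; hence the entire orbit, and therefore $\Upsilon(Y^\prime)$, stays in one compact set. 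Now given $\mu\in{\cal M}_T(\Sigma^\prime)$, form the suspension measure $\bar\mu$ on $Y^\prime$ (legitimate since $\int\phi\,d\mu<\infty$); Abramov's formula gives $h_{\bar\mu}(\Psi^1)=h_\mu(T)/\int\phi\,d\mu$. Push forward by $\Upsilon$ to the $\Phi^t$-invariant probability measure $\nu=\Upsilon_*\bar\mu$ on ${\cal Q}$, which by the previous sentence has compact support. Since $\Upsilon$ is bounded-to-one (Lemma \ref{semiconjugacy}), its fibers are finite of uniformly bounded cardinality, so the relative entropy of the factor map $\Upsilon$ vanishes and $h_\nu(\Phi^1)=h_{\bar\mu}(\Psi^1)$. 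By \cite{H10a} we get $h_\nu(\Phi^1)\leq h$, and combining, $h_\mu(T)=\left(\int\phi\,d\mu\right)h_\nu(\Phi^1)\leq h\int\phi\,d\mu$, i.e. ${\rm pr}_\mu(-h\phi)\leq 0$. Taking the supremum over $\mu\in{\cal M}_T(\Sigma^\prime)$ and then over all finite-alphabet subsystems yields ${\rm pr}_G(-h\phi)\leq 0$.

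The main obstacle is the compactness claim of the previous paragraph: one must check carefully that a finite subsystem of $(\Sigma,T)$ really corresponds to a bounded family of Teichm\"uller geodesics, that is, that the weakly tight blocks built into the symbols ${\cal S}$ force the coded orbits to return to a fixed compact part of ${\cal Q}$ with bounded gaps, so that Lemma \ref{tightcontrol} — in the weakly tight form supplied by Lemma \ref{fillmore} — applies uniformly along the whole orbit rather than only at finitely many places. The other ingredients (Abramov's formula, the vanishing of the fiber entropy of a uniformly finite-to-one factor map, and the approximation of the Gurevich pressure by finite subsystems) are standard once this reduction is in place. Finally, as remarked after the statement, one can bypass \cite{H10a} altogether: the transfer-operator estimates underlying the exponential mixing of Section 7 show directly that the relevant transfer operator has spectral radius at most one on the appropriate function space, which is precisely the assertion ${\rm pr}_G(-h\phi)\leq 0$.
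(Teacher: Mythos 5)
Your proposal is correct and takes essentially the same route as the paper: reduce via Sarig's approximation property (the paper cites Theorem 2 of \cite{S99} for compact invariant subsets, which in a countable Markov shift is the same as restricting to finite-alphabet subsystems, as you do), apply Abramov's formula, use the bounded-to-one semi-conjugacy to transfer entropy to a compactly supported $\Phi^t$-invariant measure, and finish with \cite{H10a}. Your proof is somewhat more explicit than the paper's about why the image of the suspension of a finite subsystem lands in a compact subset of ${\cal Q}$, and your closing remark about bypassing \cite{H10a} via the transfer operator mirrors the paper's own remark preceding the lemma.
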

\begin{proof} By Theorem 3 of \cite{H10a}, the entropy $h$ of the invariant 
Lebesgue measure $\lambda$ on ${\cal Q}$ 
equals the supremum of the topological entropies of 
all compact invariant subsets of ${\cal Q}$.

On the other hand, by Theorem 2 of \cite{S99},
${\rm pr}_G(-h\phi)$ equals the supremum of the 
quantity defined in (\ref{gurevich})
but restricted to invariant 
measures $\mu$ supported in 
compact subsets of $\Sigma$. 

By Abramov's formula, if $A\subset \Sigma$ is 
any compact invariant set and if $\mu$ is a $T$-invariant
Borel probability measure supported in $A$, then
the entropy of the induced invariant measure for the 
suspension flow $(Y,\Psi^t)$ equals
\[h_\mu/\int \phi d\mu.\] 
As a consequence, the Gurevich pressure of $-h\phi$ 
is nonpositive if the entropy of 
every $\Psi^t$-invariant Borel probability measure
on $Y$ which is supported in a compact set 
does not exceed $h$.

The partial semi-conjugacy $\Upsilon:(Y,\Psi^t)\to ({\cal Q},\Phi^t)$
is bounded-to-one, and 
it maps the suspension of a
$T$-invariant compact set $A\subset \Sigma$ to a compact
$\Phi^t$-invariant subset of ${\cal Q}$.
This implies that the entropy of a $\Psi^t$-invariant
Borel probability measure on $Y$ supported in a
compact set is bounded from above by the supremum of the
topological entropies of the restriction of the Teichm\"uller flow
to compact invariant subsets of ${\cal Q}$. 
This 
quantity equals $h$ by the first paragraph of this proof. 
The lemma follows.
\end{proof}

\begin{proof}[Proof of 
Theorem \ref{entropymax}]
Let $\mu$ be any $\Phi^t$-invariant 
ergodic Borel probability
measure on ${\cal Q}$ and let $q\in {\cal Q}$ be
a density point for $\mu$ which contains
itself in its $\alpha$- and $\omega$-limit set.
Use $q$ to construct the countable two-sided Markov shift 
$(\Sigma,T)$ with roof function $\phi$
and suspension flow $(Y,\Psi^t)$. 

By Theorem \ref{coding}, there exists a $\Theta^t$-invariant 
Borel probability measure $\tilde \mu$ on $X$ 
whose image under the map $\Xi_*$ equals $\mu$. 
Since for $\mu$-almost every $z\in {\cal Q}$, the orbit of 
$\Phi^t$ through $z$ contains $q$ in its 
$\alpha$ and $\omega$ limit set, Lemma \ref{semiconjugacy} shows that 
$\tilde \mu$ gives full mass
to $\Upsilon(Y)$. In particular, as $\Upsilon$ is bounded to one,
there exists a $\Psi^t$-invariant Borel probability measure $\hat \mu$ on 
$Y$ which is mapped by the partial semi-conjugacy $\Upsilon$ to $\mu$. 
By disintegration, the measure $\hat \mu$ induces a $T$-invariant measure on 
$(\Sigma,T)$.

Let $\Sigma^+$ be the set of all 
one-sided infinite admissible sequences
$(x_i)\subset {\cal S}^{\mathbb{N}}$ with 
$a_{x_ix_{i+1}}=1$ for all $i$, equipped with the
one-sided shift $T_+:\Sigma^+\to \Sigma^+$.
Since the roof function $\phi$ only depends on the future,
it defines a function on $\Sigma^+$ which will be
denoted again by $\phi$. Since the roof function $\phi$ is 
bounded from below by a positive constant, up to normalization, 
the measure $\hat \mu$ descends to a $T^+$-invariant
Borel probability measure on $\Sigma^+$.

Theorem 1.1 of \cite{BS03} states that given a 
topologically transitive countable Markov shift $(\Sigma,T)$
and a function $\phi:\Sigma \to [0,\infty)$ 
so that the Gurevich pressure of $-\phi$ is finite and of finite
variation, there exists at most one invariant probability measure
$\mu$ which maximes the quantity $h_\mu +\int \phi d\mu$.

By Lemma \ref{variation} and Lemma \ref{finitepressure}, 
the function $-h\phi$ on $\Sigma^+$ 
satisfies all the assumptions in this result. 
In particular, since 
${\rm pr}_G(-h\phi)\leq 0$, the
entropy $h_\mu$ of the invariant Borel
probability measure $\mu$ does not exceed $h$.
As a consequence, the Lebesgue 
measure 
$\lambda$ on ${\cal Q}$ 
is a measure of maximal entropy, and 
by Theorem 1.1 of \cite{BS03}, it
is unique with this property. In other words,
if $h_\mu=h$ then $\mu=\lambda$. 
Since $\mu$ was an arbitrary $\Phi^t$-invariant ergodic Borel
probability measure on ${\cal Q}$, 
this completes the proof of the Theorem.
\end{proof}

Any finite admissible sequence
$(x_0,\dots,x_{n-1})\subset {\cal S}$ 
defines the cylinder
\[[x_0,\dots,x_{n-1}]=\{(y_i)\in \Sigma^+\mid
y_i=x_i\text{ for }0\leq i\leq n-1\}.\]

A \emph{Gibbs measure} for the function
$-h\phi$ on $\Sigma^+$ is a Borel probability
measure $\nu$ with the following property.
There is a number $c>0$ so that for every
cylinder set $[x_0,\dots,x_\ell]$ and every
$z\in [x_0,\dots,x_\ell]$ 
we have
\[\nu[x_0,\dots,x_\ell]\in 
[c^{-1}e^{-h\sum_{0\leq i\leq \ell-1}\phi(T^iz)},
ce^{-h\sum_{0\leq i\leq \ell-1}\phi(T^iz)}].\]

As a consequence of
\cite{S03} and of the above discussion,
we conclude

\begin{corollary}\label{gibbs}
The Lebesgue measure is a Gibbs measure on $\Sigma^+$.
\end{corollary}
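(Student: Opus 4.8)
The plan is to invoke the thermodynamic formalism for countable topological Markov shifts developed by Sarig \cite{S03}, whose hypotheses have effectively all been verified in the preceding discussion. First I would recall the precise list of assumptions in \cite{S03} under which a unique Gibbs measure (in the sense defined just above) exists for a potential on a one-sided countable Markov shift $(\Sigma^+,T_+)$: the shift must be topologically mixing and satisfy the big images and preimages (BIP) property; the potential must have summable variations; and its Gurevich pressure must be finite. I would then check each item off. Topological mixing of $(\Sigma,T)$ — and hence of $(\Sigma^+,T_+)$ — was arranged using the BIP-property \eqref{bip} together with the remark at the end of Section 5. The BIP-property itself is exactly \eqref{bip}. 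Summable variations of $\phi$ is the content of Lemma \ref{variation}, and since $-h\phi$ differs from $\phi$ only by a positive multiplicative constant, $-h\phi$ also has summable variations. Finiteness of the Gurevich pressure of $-h\phi$ is Lemma \ref{finitepressure}, which in fact gives the sharper bound ${\rm pr}_G(-h\phi)\le 0$ (and it is $>-\infty$ since $\phi$ is bounded above on each cylinder by Lemma \ref{bounded} applied along the splitting sequences, so $-h\phi$ is bounded below on cylinders).

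With these hypotheses in place, Sarig's theorem produces a unique $T_+$-invariant Gibbs measure $\nu_0$ for the potential $-h\phi$ on $\Sigma^+$, and this Gibbs measure is the equilibrium state: it attains the supremum in \eqref{gurevich}, i.e. ${\rm pr}_{\nu_0}(-h\phi)=h_{\nu_0}-h\int\phi\,d\nu_0={\rm pr}_G(-h\phi)$. The remaining task is to identify $\nu_0$ with the measure on $\Sigma^+$ coming from the Lebesgue measure $\lambda$ on ${\cal Q}$. Here I would use the analysis already carried out in the proof of Theorem \ref{entropymax}: the construction starting from a $\lambda$-density point $q$ (which is birecurrent and lies in its own $\alpha$- and $\omega$-limit set by Poincaré recurrence and Lemma \ref{minimalfull}) yields a $T$-invariant Borel probability measure on $\Sigma$ that pushes forward under $\Upsilon$ to $\lambda$, and, the roof function depending only on the future, this descends to a $T_+$-invariant measure $\nu_\lambda$ on $\Sigma^+$. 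By Abramov's formula the entropy of the suspension measure is $h_{\nu_\lambda}/\int\phi\,d\nu_\lambda$, and since $\lambda$ has entropy $h$ for the Teichmüller flow and $\Upsilon$ is bounded-to-one, this forces $h_{\nu_\lambda}=h\int\phi\,d\nu_\lambda$, i.e. ${\rm pr}_{\nu_\lambda}(-h\phi)=0$. Combined with ${\rm pr}_G(-h\phi)\le 0$ from Lemma \ref{finitepressure}, this shows $\nu_\lambda$ is an equilibrium state for $-h\phi$, hence ${\rm pr}_G(-h\phi)=0$; by the uniqueness part of Sarig's theorem the equilibrium state is unique and equals the Gibbs measure, so $\nu_\lambda=\nu_0$ is a Gibbs measure.

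The main obstacle I anticipate is not any single estimate but rather the bookkeeping needed to make the last identification airtight: one must be careful that the measure on $\Sigma^+$ arising from $\lambda$ is genuinely $T_+$-invariant and of finite (nonzero) mass after normalization, that the bounded-to-one property of $\Upsilon$ together with bounded-to-one-ness of $\Xi$ (Corollary \ref{semi}, Lemma \ref{semiconjugacy}) indeed transfers entropy without loss — invoking that a finite-to-one factor map preserves measure-theoretic entropy — and that the Gibbs inequality for $\nu_\lambda$ is really the one stated (with the correct exponent $-h\sum_{0\le i\le n-1}\phi(T^iz)$ over cylinders of length $n$, noting the mild index ambiguity $\ell$ versus $n-1$ in the statement). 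All of this is routine given \cite{S03} and the earlier sections, so the corollary follows; the heart of the matter — finiteness of the Gurevich pressure and summability of variations — has already been done.
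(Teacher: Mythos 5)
Your proposal matches the paper's own derivation, which consists precisely of the observation ``as a consequence of \cite{S03} and of the above discussion'' — i.e. invoking Sarig's Gibbs-measure existence theorem for BIP shifts with a summable-variation potential of finite pressure, and then identifying the resulting Gibbs measure with the lift of $\lambda$ via the uniqueness of the equilibrium state already established in the proof of Theorem \ref{entropymax}. You have simply spelled out the hypothesis-checking and the identification step that the paper leaves implicit; the route is the same.
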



\bigskip

\noindent
MATHEMATISCHES INSTITUT DER UNIVERSIT\"AT BONN\\
ENDENICHER ALLEE 60\\ 
D-53115 BONN, GERMANY\\
\noindent
e-mail: ursula@math.uni-bonn.de


\begin{thebibliography}{ABEM06}







\bibitem[AGY06]{AGY06} A.~Avila, S.~Gou\"ezel,
J.~C.~Yoccoz, {\em Exponential mixing for the Teichm\"uller
flow}, Publ. Math. Inst. Hautes \'Etudes Sci. No. 104  (2006), 143--211.







\bibitem[BL09]{BL07} C.~Boissy, E.~Lanneau,
{\em Dynamic and
geometry of the Rauzy-Veech induction for quadratic differentials}, 
Erg. Th. \& Dyn. Syst. 29 (2009), 767--816.










\bibitem[BG11]{BG07} A.~Bufetov, B.~Gurevich, {\em 
Existence and uniqueness of a measure of maximal entropy for the Teichm\"uller 
flow on the moduli space of abelian differentials}, 
Sb. Math. 202 (2011), 935--970.




\bibitem[BS03]{BS03} J.~Buzzi, O.~Sarig, 
{\em Uniqueness of equilibrium measures for countable
Markov shifts and multidimensional piecewise expanding maps},
Erg. Th. \& Dyn. Syst. 23 (2003), 1383--1400.



\bibitem[CEG87]{CEG87} R.~Canary, D.~Epstein, P.~Green,
{\em Notes on notes of Thurston}, in ``Analytical and geometric
aspects of hyperbolic space'', edited by D.~Epstein, London Math.
Soc. Lecture Notes 111, Cambridge University Press, Cambridge 1987.









\bibitem[EM18]{EM18} A. Eskin, M. Mirzakhani,
{\em Invariant and stationary measures for the $SL(2,\mathbb{R})$ action
on moduli space}, Publ. Math. Inst. Hautes \'Etudes Sci. 127 (2018), 95--324. 

\bibitem[EMM15]{EMM15} A. Eskin, M. Mirzakhani, A. Mohammadi,
{\em Isolation, equidistribution, and orbit closures for the $SL(2,\mathbb{R})$-action on
moduli space}, Ann. of Math. 182 (2015), 673--721. 















\bibitem[H06]{H06a} U.~Hamenst\"adt, {\em Train
tracks and the Gromov boundary of the complex
of curves}, in ``Spaces of Kleinian groups''
(Y.~Minsky, M.~Sakuma, C.~Series, eds.),
London Math. Soc. Lec. Notes 329 (2006), 187--207.



\bibitem[H09a]{H09a} U.~Hamenst\"adt, {\em Geometry of the
mapping class groups I: Boundary amenability}, 
Invent. Math. 175 (2009), 545--609.




\bibitem[H09b]{H09b} U.~Hamenst\"adt,
{\em Invariant Radon measures on measured
lamination space}, Invent. Math. 176 (2009), 223--273.




\bibitem[H10a]{H10a} U.~Hamenst\"adt,
{\em Dynamics of the Teichm\"uller flow on compact
invariant sets}, J. Mod. Dynamics 4 (2010),  393--418.

\bibitem[H10b]{H10b} U. Hamenst\"adt,
{\em Stability of quasi-geodesics in Teichm\"uller space},
Geom. Dedicata 146 (2010), 101--116.





\bibitem[H24]{H22} U. Hamenst\"adt,
{\em Periodic orbits in the thin part of strata},
J. Reine Angew. Math. 809 (2024), 41--89.

\bibitem[HK95]{HK95} B.~Hasselblatt, A.~Katok,
{\sl Introduction to the modern theory of
dynamical systems}, Cambridge University Press, 
Cambridge 1995.













\bibitem[KZ03]{KZ03} M.~Kontsevich, A.~Zorich,
{\em Connected components of the moduli spaces of 
Abelian differentials with prescribed singularities},
Invent. Math. 153 (2003), 631--678.

\bibitem[L08]{L08} E.~Lanneau, {\em Connected components
of the strata of the moduli spaces of quadratic
differentials}, Ann. Sci. \'Ec. Norm. Sup\'er. 41 (2008),
1--56.






\bibitem[Mn87]{Mn87} R.~Ma\~{n}e, {\sl Ergodic theory and
differentiable dynamics}, Springer Ergebnisse der
Mathematik 8, Springer Berlin Heidelberg 1987.



\bibitem[Mar16]{Mar16} B.~Martelli, {\em An introduction to
    Geometric Topology}, arXiv:1610.02592.


\bibitem[M80]{M80} H.~Masur, {\em Uniquely ergodic
quadratic differentials}, Comm. Math. Helv. 55 (1980),
255--266.

\bibitem[M82]{M82} H.~Masur, {\em Interval exchange transformations
and measured foliations}, Ann. Math. 115 (1982), 169-201.




\bibitem[M92]{M92} H.~Masur, {\em Hausdorff dimension of the set of
    nonergodic foliations of a quadratic differential}, Duke Math. J. 66 (1992), 387--442. 

%


\bibitem[MM99]{MM99} H.~Masur, Y.~Minsky, {\em Geometry of the
complex of curves I: Hyperbolicity}, Invent. Math. 138 (1999),
103-149.

\bibitem[MM00]{MM00} H.~Masur, Y.~Minsky, {\em Geometry of the complex
of curves II: Hierarchical structure}, GAFA 10 (2000), 902-974.


















\bibitem[PH92]{PH92} R.~Penner with J.~Harer, {\sl Combinatorics
of train tracks}, Ann. Math. Studies 125, Princeton University
Press, Princeton 1992.






\bibitem[R14]{R14} K. Rafi,
{\em Hyperbolicity in Teichm\"uller space}, 
Geometry \& Topology 19 (2014), 3025--3053.

\bibitem[S99]{S99} O.~Sarig, {\em Thermodynamic
formalism for countable Markov shifts}, 
Erg. Th. \& Dyn. Syst. 19 (1999), 1565--1593. 

\bibitem[S03]{S03} O. Sarig, 
{\em Existence of Gibbs measures for countable Markov shifts},
Proc. AMS 131 (2003), 1751--1758.





\bibitem[TW18]{TW18} R. Tang, R. Webb, {\em Shadows of 
Teichm\"uller discs in the curve graph}, 
Int. Math. Res. Not. IRMN 2018, 3301--3341.




\bibitem[V82]{V82} W.~Veech, {\em Gauss
measures for transformations on the space of
interval exchange maps}, Ann. Math. 115 (1982), 201--242.

\bibitem[V86]{V86} W.~Veech, {\em The Teichm\"uller geodesic
flow}, Ann. Math. 124 (1986), 441--530.










\end{thebibliography}
\end{document}